\newcommand\BibTeX{{\rmfamily B\kern-.05em \textsc{i\kern-.025em b}\kern-.08em
T\kern-.1667em\lower.7ex\hbox{E}\kern-.125emX}}
\numberwithin{equation}{section}
\theoremstyle{plain}
\newtheorem*{proposition*}{Proposition}
\newtheorem{theorem}{Theorem}
\newtheorem*{theorem*}{Theorem}
\newtheorem{corollary}[theorem]{Corollary}
\newtheorem{lemma}{Lemma}
\newcommand{\B}{\boldsymbol}
\newcommand{\diag}{\mathrm{diag}}
\newcommand{\M}{\mathbf}
\newcommand{\df}{\emph{df}~}
\DeclareMathOperator*{\argmin}{arg\,min}
\def\boxit#1{\vbox{\hrule\hbox{\vrule\kern6pt\vbox{\kern6pt#1\kern6pt}\kern6pt\vrule}\hrule}}
\begin{document}

\title{Computing the degrees of freedom of rank-regularized estimators and cousins}

\author{
Rahul Mazumder  \vspace{0.2cm} \\
MIT Sloan School of Management \\
Massachusetts Institute of Technology, Cambridge, MA 02142  \vspace{0.4cm} \\
Haolei Weng  \vspace{0.2cm} \\
Department of Statistics and Probability \\
Michigan State University, East Lansing, MI 48824 
}

\date{}
\maketitle

\begin{abstract}
Estimating a low rank matrix from its linear measurements is a problem of central importance in contemporary statistical analysis. The choice of tuning parameters for estimators remains an important challenge from a theoretical and practical perspective. To this end, Stein's Unbiased Risk Estimate (SURE) framework provides a well-grounded statistical framework for degrees of freedom estimation. In this paper, we use the SURE framework to obtain degrees of freedom estimates for a general class of spectral regularized matrix estimators, generalizing beyond the class of estimators that have been studied thus far. To this end, we use a result due to Shapiro (2002) pertaining to the differentiability of symmetric matrix valued functions, developed in the context of semidefinite optimization algorithms. We rigorously verify the applicability of Stein's lemma towards the derivation of degrees of freedom estimates; and also present new techniques based on Gaussian convolution to estimate the degrees of freedom of a class of spectral estimators to which Stein's lemma is not directly applicable. 
\end{abstract}

\noindent {\bf Key Words: degrees of freedom; divergence; low rank; matrix valued function; regularization; spectral function; SURE}

\section{Introduction}\label{sec1}

Consider the linear model setup with 
$$ \M{y} = \B{\mu} + \B\epsilon, ~~ \text{Cov}(\B{\epsilon}) = \tau^2 \M{I},  ~~ \mathbb E (\B{\epsilon}) = \M{0},$$
where, we observe $\M{y}\in \mathbb{R}^n$, a noisy version of the signal $\B\mu\in \mathbb{R}^n$. Let $\hat{\B\mu}$ be an estimator of $\B\mu$. The accuracy of 
$\hat{\B\mu}$ as an estimator for $\B\mu$ is often quantified via the 
expected mean squared error (MSE) which admits the following decomposition \citep{efron2004least}
\begin{equation}\label{decomp-1-0}
R \triangleq \mathbb{E}\|\hat{\B \mu}-\B \mu \|^2_2=
-\tau^2n+\mathbb{E}\|\hat{\B \mu} -\M{y}\|^2_2+2\sum_{i=1}^n\text{Cov}(\hat{\mu}_i, y_i),
\end{equation}
where $\|\cdot\|_2$ is the usual $\ell_2$ norm, the subscript $i$ indicates the $i$th component of a vector. The covariance term appearing in~\eqref{decomp-1-0} measures the complexity of the estimator $\hat{\B\mu}$ and is related 
to the well known degrees of freedom (\df) of an estimator~\citep{stein1981estimation, efron2004least}:
\begin{eqnarray}
 \emph{df}(\hat{\B \mu})=\sum_{i=1}^n\text{Cov}(\hat{\mu}_i, y_i)/\tau^2. \label{original}
 \end{eqnarray}
The decomposition \eqref{decomp-1-0} suggests an unbiased estimator $\widehat{\df}(\hat{\B \mu})$ for $\emph{df}(\hat{\B \mu})$ that leads to an unbiased estimate for $R$:
 \begin{eqnarray}\label{unbiased:one}
 \widehat{R}=-\tau^2n+\|\hat{\B \mu} -\M{y}\|^2_2+2\tau^2 \cdot \widehat{\df}(\hat{\B \mu}).
 \end{eqnarray}
 We can then use $\widehat{R}$ to choose between different estimators. Hence the degrees of freedom plays an important role in model assessment and selection. Consider the example of multiple linear regression, where $\B\mu = X \B\beta$ with design matrix $X\in \mathbb{R}^{n\times p}$ and regression coefficient $\B \beta\in \mathbb{R}^p$. In the case when $n>p$ and $X$ is of full rank, the \df of the least square estimates equals $p$, i.e., the number of parameters in the model. This fact combined with \eqref{unbiased:one} leads to the well known Mallows's $C_p$ criterion \citep{mallows1973some}. For estimators $\hat{\B\mu}$ that are a linear functional of $\M{y}$ (arising via ridge regression, for example), the \emph{df} can be computed by looking at the trace of the smoother matrix \citep{friedman2001elements}. However, for estimators that are nonlinear functionals of $\M{y}$, the computation of \emph{df} becomes much more challenging. \cite{stein1981estimation,efron2004least} derive an alternate expression of \df for 
the Gaussian sequence model $\M{y}\sim N(\B{\mu}, \tau^2\M{I})$ when $\hat{\B \mu}$ is weakly differentiable with respect to $\M{y}$\footnote{There are additional mild integrability conditions about $\hat{\B \mu}$. Please refer to Appendix \ref{SURE:condition} for details. }. In this case, the degrees of freedom of $\hat{\B \mu}$ is given by the celebrated Stein's Lemma:
\begin{equation}
\text{(Stein's Lemma)}~~~~~~~~~~~~~\emph{df}(\hat{\B \mu})=\mathbb{E}~ \left(\sum_{i=1}^n \partial \hat{\mu}_i/ \partial y_i \right)~~~~~~~~~~~~~~~ \label{alter}
\end{equation}
which suggests an unbiased estimate for $R$, termed Stein's Unbiased Risk Estimate (SURE):


\begin{eqnarray*}
\widehat{R}=-\tau^2n+\|\hat{\B \mu}-\M y\|^2_2+2\tau^2\cdot \sum_{i=1}^n\partial \hat{\mu}_i/\partial y_{i}.
\end{eqnarray*}
The SURE framework has been successfully utilized in different statistical problems. For instance, \cite{donoho1995adapting} derived the \df of soft thresholding in a wavelet shrinkage procedure. \cite{zou2007degrees, tibshirani2012degrees} studied the \df of lasso and generalized lasso fit. \cite{mazumder2011sparsenet, tibshirani2014degrees} obtained the \df of best subset selection under the linear regression model with orthogonal design.

The above framework also applies to matrix estimation --- here, data is of the form 
$y_{ij} = \mu_{ij} + \epsilon_{ij}$ for $i=1, \ldots, m$ and $j = 1, \ldots, n$. 
The general problem of low rank matrix estimation has been widely studied in the statistical community
in the context of multivariate linear regression \citep{anderson1951estimating, izenman1975reduced, yuan2007dimension} and matrix completion \citep{candes2009exact, mazumder2010spectral}, among others. There has been nice recent work on using SURE theory to derive the \df of low rank matrix estimators -- but the problem becomes quite challenging as one needs to deal with the differentiability properties of nonlinear functions of the spectrum and singular vectors of a matrix. 
\citet{CandesEtal2013} obtained the analytic expression of the divergence\footnote{See the formal definition in Section \ref{sec2}.} $\sum_{ij}\partial \hat{\mu}_{ij}/\partial y_{ij}$ for a singular value thresholding estimator -- they also rigorously verified sufficient conditions under which Stein's Lemma holds. 
\cite{mukherjee2015degrees,Yuan2011} derived expressions for the divergence of certain reduced rank and nuclear norm penalized estimators; but they do not formally establish 
if the regularity conditions sufficient for Stein's Lemma to hold, are satisfied.
To sum up, the challenge for deriving the \df of matrix estimators is three-fold. Firstly, it may be challenging to verify the regularity conditions required for \eqref{alter} to hold. A blind use of formula \eqref{alter} may lead to 
inaccurate \df calculation\footnote{For example, in the best subset selection procedure in linear regression, the formula does not hold and the \df estimate is not the number of nonzero regressors.}. Secondly, even when formula \eqref{alter} is available, it might be difficult to derive an analytical expression of $\sum_{ij}\partial \hat{\mu}_{ij}/\partial y_{ij}$, especially for matrix estimators that depend on the singular vectors/values of the observed matrix in a non-linear way. Thirdly, there are estimators for which Stein's Lemma is not readily applicable -- in these cases, new techniques may be necessary to derive \df estimates. Thusly motivated, in this paper, we aim to present a systematic study of two generic low rank matrix estimators, namely spectral regularized and rank constrained estimators---this includes, but is not limited to, all estimators studied in the three aforementioned works. Our contributions are summarized as:

\begin{itemize}
\item[(i)] We propose a framework to derive the analytic formula of $\sum_{ij}\partial \hat{\mu}_{ij}/\partial y_{ij}$ for general matrix estimators, by appealing to some fundamental results pertaining to differentiability of symmetric matrix valued functions due to Shapiro~\citep{Shapiro2002}; derived in the context of semidefinite optimization 
algorithms. The expressions for the \df~of several estimators are thus shown to follow as special cases.
\item[(ii)] For several matrix estimators where Stein's Lemma is not directly applicable, our derivation of the \df relies on using ideas from Gaussian convolution along with 
subtle limiting arguments that utilize the eigenvalue distribution of a real-valued central Wishart matrix. The techniques proposed in this paper may apply to a wider class of estimators, beyond what is studied herein.  
\item[(iii)] Our analysis covers a much wider range of low rank matrix estimators than what has been studied before, and we present a unified framework to address these problems. 
\end{itemize}

The remainder of the paper is organized as follows. We introduce the main theorem for calculating the divergence of matrix estimators in Section \ref{divergence:sec}. Sections \ref{sec3} and \ref{sec4} consist of multiple applications of the main theorem in deriving the degrees of freedom for various low rank matrix estimators; and spectral regularized estimators. Numerical experiments are performed in Section \ref{simulation:sec} to validate the derived \df formulas. We conclude the paper with a conclusion in Section \ref{discussion:sec}. All the proof is relegated to the appendix. 

\subsection{Notations}\label{sec2}
For a vector $\B a=(a_1,\ldots,a_n) \in \mathbb{R}^n$, we use the notation $\diag(\B a)$ to denote the $n\times n$ diagonal matrix with $i$th diagonal entry being $a_i$. For a real matrix $Y \in \mathbb{R}^{m \times n}$ (we assume, without loss of generality, $m\geq n$ throughout the paper), let its transpose be $Y'$ and its reduced singular value decomposition be $Y=U\diag(\B \sigma) V'$, where $U=(\B u_1,\ldots, \B u_n), V=(\B v_1,\ldots,\B v_n), \B \sigma=(\sigma_1,\ldots, \sigma_n)$ and $\sigma_1 \geq \dots \geq \sigma_n\geq 0$. We denote the Frobenius norm of $Y$ by $\|Y\|_F$. Unless otherwise stated, we use $Y=U\diag(\B \sigma) V'$ to represent the reduced singular value decomposition (SVD). $Y$ is called simple if it has no repeated singular values. For a real valued function $f : \mathbb{R}^+ \rightarrow \mathbb{R}$, define the associated matrix valued spectral function $S(\cdot;f): \mathbb{R}^{m\times n} \rightarrow \mathbb{R}^{m\times n} $ as $S(Y;f)=U\diag(f(\B \sigma))V'$ where $f(\B \sigma)=(f(\sigma_1),\ldots, f(\sigma_n))$. A function $f$ is said to be directionally differentiable at $x$ if the directional derivative 
\[
f'(x;h)\triangleq \underset{t \downarrow 0}{\lim}\frac{f(x+th)-f(x)}{t}
\]
exists for any $h$. Denote the divergence of $S(Y;f)$ by 
\[
\nabla \cdot S(Y;f)\triangleq \underset{ij}{\sum}~\partial [S(Y;f)]_{ij}/\partial Y_{ij},
\]
where $[S(Y;f)]_{ij}$ is the $(i,j)$th element of $S(Y;f)$. When we mention regularity conditions, we refer to the integrability and weak differentiability conditions that are required for \eqref{alter} to hold (see, for example, \citet{stein1981estimation,CandesEtal2013} for details).

\section{Computing the Divergence of Matrix Valued Spectral Functions}
\label{divergence:sec}
We present herein a framework to compute the \df~for matrix estimators of the form $S(Y;f)$. Towards this end, we will need to compute the divergence $\nabla \cdot S(Y;f)$, by making use of results due to~\cite{Shapiro2002}. For a symmetric matrix $X \in \mathbb{R}^{N \times N}$, let $\lambda_1(X) > \cdots > \lambda_q(X)$ be the set of its unique eigenvalues, $r_1, \ldots, r_q$ be the associated multiplicities, and $E_1(X) \in \mathbb{R}^{N \times r_1}, \ldots, E_q(X) \in \mathbb{R}^{N \times r_q}$ be the set of matrices whose columns are the corresponding orthonormal eigenvectors. For any given function $f : \mathbb{R} \rightarrow \mathbb{R}$, define the associated matrix valued function $F: \mathbb{R}^{N\times N} \rightarrow \mathbb{R}^{N \times N}$,
\begin{align}
\label{matrix:value:fun:def}
 F(X)=\sum_{k=1}^q f(\lambda_k(X))E_k(X)E_k(X)'. 
 \end{align}
 \cite{Shapiro2002} investigates differentiability properties of the function $F(X)$ in cases where $f(x)$ is directionally differentiable. His study is motivated by the works of \cite{sun2002semismooth, pang2003semismooth} on the semismoothness of $F(X)$ when $f(x)=|x|$ or $\max\{0,x\}$, which play important roles in algorithms for semidefinite programs and complementarity problems. For our purpose, we consider a special case of the directional differentiability property of $F(X)$ from \cite{Shapiro2002}. 
 
 Suppose $f$ is directionally differentiable at every point $\lambda_k(X), k=1, \ldots, q$. Then the directional derivative $f'(\lambda_k(X);h)$ exists for $\forall h\in \mathbb{R}$. Let $\Psi_k : \mathbb{R}^{r_k \times r_k} \rightarrow \mathbb{R}^{r_k \times r_k}$ be the associated matrix valued function defined through $f'(\lambda_k(X);\cdot)$. That is, for a given symmetric matrix $Y \in \mathbb{R}^{r_k\times r_k}$, 
 \[
 \Psi_k(Y)=\sum_{i} f'(\lambda_k(X); \lambda_i(Y))E_i(Y)E_i(Y)',
 \]
 where $\{\lambda_i(Y)\},\{E_i(Y)\}$ are the sets of unique eigenvalues and the corresponding orthonormal eigenvectors of $Y$, respectively. 
 
 
\begin{lemma}\label{shap}
\citep{Shapiro2002} Using the notation above, $F(X)$ is directionally differentiable at $X$ and its directional derivative $F'(X; H)$ is given by:
\begin{eqnarray} 
&&F'(X; H) =  \lim_{t \, \downarrow \, 0} \frac{F(X+tH)-F(X)}{t} \nonumber \\
& = & \frac{1}{2} \sum^q_{\substack{l \neq k \\ l,k=1}} \frac{f(\lambda_l(X))-f(\lambda_k(X))}{\lambda_l(X)-\lambda_k(X)}(E_lE_l' H E_kE_k' + E_kE_k' H E_lE_l')  \nonumber \\
&   & + \sum_{k=1}^q E_k[\Psi_k(E_k'HE_k)]E'_k \,, \label{shapiro}
\end{eqnarray}
where $H \in \mathbb{R}^{N\times N}$ is an arbitrary symmetric matrix, and $E_k$ denotes $E_k(X)$ for $k=1,2,\ldots, q$. 
\end{lemma}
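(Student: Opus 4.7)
The plan is to reduce the problem to classical first-order perturbation theory for the eigendecomposition of $X+tH$, so that the directional differentiability of $f$ at each $\lambda_k \triangleq \lambda_k(X)$ can be invoked term by term. All the work will be carried out in a basis adapted to the eigenspaces of $X$; the formula then arises from bookkeeping of (i) how the eigenvalues within each cluster of size $r_k$ split to first order, and (ii) how the eigenprojectors across different clusters mix to first order in $t$.

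First I would assemble the orthogonal matrix $Q=[E_1,\ldots,E_q]$ so that $Q'XQ = \Lambda$ is block-diagonal with blocks $\lambda_k I_{r_k}$, and set $\widetilde H = Q'HQ$ with blocks $\widetilde H_{lk}=E_l'HE_k$. Because $F$ is equivariant under orthogonal conjugation, the claim is equivalent to computing the one-sided derivative at $t=0^+$ of the spectral function applied to $\Lambda + t\widetilde H$, then conjugating by $Q$.

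Next, using standard Rellich/Kato-type perturbation results on $\Lambda + t\widetilde H$, as $t\downarrow 0$ the spectrum splits into $q$ clusters: the $k$th cluster consists of $r_k$ eigenvalues of the form $\lambda_k + t\,\lambda_i(\widetilde H_{kk}) + o(t)$, with an orthonormal set of eigenvectors whose limits are the columns of $E_i(\widetilde H_{kk})$ embedded in the $k$th diagonal block. Composing with the directionally differentiable $f$ yields
\[
f\bigl(\lambda_k + t\,\lambda_i(\widetilde H_{kk}) + o(t)\bigr) = f(\lambda_k) + t\,f'\bigl(\lambda_k;\,\lambda_i(\widetilde H_{kk})\bigr) + o(t),
\]
so summing over $i$ in the $k$th cluster and subtracting $f(\lambda_k)E_kE_k'$ yields precisely $E_k \Psi_k(E_k'HE_k)E_k'$, matching the second sum in \eqref{shapiro}.

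For the off-diagonal mixing, I would expand the perturbed eigenprojector $\Pi_k(t)$ associated with the $k$th cluster as $E_kE_k' + t\,\Delta_k + o(t)$, and determine $\Delta_k$ from the Sylvester-type identity obtained by differentiating $\Pi_k(t)(X+tH)=(X+tH)\Pi_k(t)$ at $t=0$. This gives off-diagonal blocks $[\Delta_k]_{lk} = \widetilde H_{lk}/(\lambda_l-\lambda_k)$ for $l\neq k$. Weighting the change in each projector by $f(\lambda_k)$, summing over $k$, symmetrizing in $(k,l)$, and transporting back through $Q$ produces the first double sum in \eqref{shapiro}.

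The hardest step will be (ii) in the case where $f$ is merely directionally (not Fr\'echet) differentiable within a degenerate cluster: then the first-order eigenvector selection genuinely depends on $H$, so $F$ itself need not be differentiable at $X$. Shapiro's key insight, which I would follow, is that the eigendecomposition of $\widetilde H_{kk}=E_k'HE_k$ prescribes the correct adapted basis inside the $k$th eigenspace; the cluster contribution then assembles coherently into $\Psi_k(E_k'HE_k)$, independently of any sub-degeneracies of $\widetilde H_{kk}$ (since $\Psi_k$ is itself defined as a spectral function of the directional-derivative map $h\mapsto f'(\lambda_k;h)$).
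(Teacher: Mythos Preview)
The paper does not prove Lemma~\ref{shap}; it is quoted verbatim as a result of \cite{Shapiro2002} and used as a black box throughout (see the sentence immediately following the lemma and its role in Appendix~\ref{pf:divergence}). So there is no ``paper's own proof'' to compare against.

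Your sketch is a faithful outline of Shapiro's original argument: pass to the block-diagonal frame $Q=[E_1,\ldots,E_q]$, use Rellich--Kato perturbation to track how the $r_k$ eigenvalues in the $k$th cluster split as $\lambda_k+t\,\lambda_i(E_k'HE_k)+o(t)$, and use first-order perturbation of the cluster projectors $\Pi_k(t)$ to produce the divided-difference cross terms. The one technical point you pass over is the chain-rule step
\[
f\bigl(\lambda_k+t\mu+o(t)\bigr)=f(\lambda_k)+t\,f'(\lambda_k;\mu)+o(t),
\]
which does \emph{not} follow from mere directional differentiability in general Banach spaces; here it is valid because $f:\mathbb{R}\to\mathbb{R}$, so two-sided directional differentiability at $\lambda_k$ forces $f(\lambda_k+s)-f(\lambda_k)=O(|s|)$ as $s\to 0$, and the sign of $t\mu+o(t)$ is eventually that of $\mu$ when $\mu\neq 0$. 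Shapiro treats this step with some care; you should flag it rather than fold it into the word ``composing''. Otherwise the proposal is sound.
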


Shapiro's result ensures that matrix valued functions inherit directional differentiability (at a matrix point $X$), from the real valued function $f(\cdot)$ (at all the distinct eigenvalues of $X$). We will present a generalization of Lemma~\ref{shap} to asymmetric matrices---this will be useful to address the differentiability properties of (rectangular) matrix valued spectral functions (see the definition in Section \ref{sec2}). Towards this end, we need the following lemma to connect between symmetric and asymmetric matrices.

\begin{lemma} \label{symmetry}
For any matrix $Y \in \mathbb{R}^{m \times n}$, consider the reduced singular value decomposition $Y=U\Sigma V'$ with $\Sigma \in \mathbb{R}^{n\times n}$. Thus, there exists $\bar{U} \in \mathbb{R}^{m \times (m-n)}$ such that $\bar{U}'\bar{U}=I\in\mathbb{R}^{(m-n)\times (m-n)}$ and $\bar{U}'U=0 \in \mathbb{R}^{(m-n)\times n}$. Define the matrices
\begin{eqnarray}\label{dfone}
Y^{\ast}=
\begin{bmatrix}
0 & Y \\
Y^T & 0
\end{bmatrix}
, \quad P=
\begin{bmatrix}
\frac{1}{\sqrt{2}}U & \frac{1}{\sqrt{2}}U & \bar{U} \\
\frac{1}{\sqrt{2}}V & \frac{-1}{\sqrt{2}}V & 0
\end{bmatrix}
\quad \text{and} \quad \Sigma^{\ast}=
\begin{bmatrix}
\Sigma & 0 & 0\\
 0& -\Sigma & 0 \\
0 & 0 & 0
\end{bmatrix}\,.
\end{eqnarray}
An eigendecomposition of  $Y^{\ast}$ is given by: $Y^{\ast}=P\Sigma^{\ast}P'$.
\end{lemma}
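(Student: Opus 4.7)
The statement is really a verification claim: we just need to check that the pair $(P, \Sigma^{\ast})$ furnishes a spectral decomposition of $Y^{\ast}$. The plan is therefore to carry out two direct block-matrix computations: first, confirm that $P$ is orthogonal, i.e.\ $P'P = I_{m+n}$; second, multiply out $P\Sigma^{\ast}P'$ and match blocks with $Y^{\ast}$. Neither step requires any machinery beyond the defining identities $U'U = V'V = I_n$, $\bar{U}'\bar{U} = I_{m-n}$, $\bar{U}'U = 0$, and $Y = U\Sigma V'$.

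For orthogonality of $P$, I would compute $P'P$ in the natural $3 \times 3$ block partition induced by the three column blocks of $P$ (of widths $n$, $n$, and $m-n$). The diagonal blocks come out to $\tfrac{1}{2}(U'U + V'V) = I_n$, $\tfrac{1}{2}(U'U + V'V) = I_n$, and $\bar{U}'\bar{U} = I_{m-n}$. For the off-diagonal blocks, the $(1,2)$ block is $\tfrac{1}{2}(U'U - V'V) = 0$, while the $(1,3)$ and $(2,3)$ blocks reduce to $\tfrac{1}{\sqrt{2}}U'\bar{U} = 0$ by construction of $\bar{U}$. Hence $P'P = I_{m+n}$ and $P$ is orthogonal.

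For the spectral identity, I would first form $P\Sigma^{\ast}$, noting that right-multiplication by $\Sigma^{\ast}$ only rescales the first two block columns of $P$ by $\Sigma$ and $-\Sigma$ and annihilates the third. The resulting matrix multiplied by $P'$ then gives a $2 \times 2$ block matrix whose $(1,1)$ entry is $\tfrac{1}{2}U\Sigma U' - \tfrac{1}{2}U\Sigma U' = 0$, whose $(2,2)$ entry is $\tfrac{1}{2}V\Sigma V' - \tfrac{1}{2}V\Sigma V' = 0$, and whose $(1,2)$ and $(2,1)$ entries collapse to $U\Sigma V' = Y$ and $V\Sigma U' = Y'$ respectively; the contribution from the third block column vanishes because it is multiplied by the zero block of $\Sigma^{\ast}$. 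This exactly reproduces $Y^{\ast}$.

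There is no real obstacle in this proof; the only thing to be careful about is keeping track of the block dimensions (in particular the $(m-n)$-dimensional piece coming from $\bar{U}$, which is needed to make $P$ genuinely square, but which carries a zero eigenvalue and therefore drops out of $P\Sigma^{\ast}P'$). The lemma should be regarded as a bookkeeping step: it embeds the rectangular SVD of $Y$ into a symmetric eigendecomposition of $Y^{\ast}$, so that Shapiro's result (Lemma~\ref{shap}) on symmetric matrix valued spectral functions can subsequently be applied to rectangular matrix valued spectral functions $S(Y;f)$ via the dilation $Y \mapsto Y^{\ast}$.
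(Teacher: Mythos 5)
Your verification is correct: with the defining identities $U'U=V'V=I_n$, $\bar{U}'\bar{U}=I_{m-n}$, $\bar{U}'U=0$ and $Y=U\Sigma V'$, the two block computations $P'P=I_{m+n}$ and $P\Sigma^{\ast}P'=Y^{\ast}$ go through exactly as you describe. The paper itself gives no proof of Lemma~\ref{symmetry}---it simply cites the relation between the SVD of $Y$ and the eigendecomposition of its symmetric dilation as a well-known fact from \cite{golub2012matrix}---so your direct block-matrix check is precisely the argument being implicitly invoked, and nothing further is required.
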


The relation between the singular value decomposition of a matrix $Y$ and the Schur decomposition of its symmetrized version $Y^*$ is a well known result in matrix-theory -- see \cite{golub2012matrix} for example. In our case, Lemma \ref{symmetry} provides a tool to study the directional differentiability of matrix valued spectral functions via Lemma \ref{shap}. In particular, for any given $S(Y;f)$, we can define a real valued function $f^*: \mathbb{R} \rightarrow \mathbb{R}$ as $f^*(x)=f(x)$ for $x\geq 0$ and $f^*(x)=-f(-x)$ otherwise. Let $Y^*$ be the matrix defined in Lemma \ref{symmetry} and $F^*(Y^*)$ be the matrix valued function associated with $f^*(x)$ as described in \eqref{matrix:value:fun:def}. Then Lemma \ref{symmetry} leads to
\begin{eqnarray*}
F^*(Y^*)=
\begin{bmatrix}
0 & S(Y;f) \\
S(Y;f)' &0
\end{bmatrix}.
\end{eqnarray*}
Hence the directional differentiability of $S(Y;f)$ can be analyzed by studying the symmetric matrix valued function $F^*(Y^*)$ through Lemma \ref{shap}. The divergence of $S(Y;f)$ can then be accordingly derived. 
The general formula for the divergence of matrix valued spectral functions is given in Corollary \ref{corcor1}; and the proof is presented in Appendix \ref{pf:divergence}.

\begin{corollary} \label{corcor1}
Given a matrix $Y\in \mathbb{R}^{m\times n}$ with singular values $\sigma_1 \geq \ldots \geq \sigma_n$, let $s_1> s_2 >\ldots > s_K \geq 0$ be the set of distinct singular values, $d_1,\ldots, d_K$ be the associated multiplicities. For any $f : \mathbb{R}^+ \rightarrow \mathbb{R}$ with $f(0)=0$, if it is differentiable at every point $s_i, 1 \leq i \leq K$, then
\begin{eqnarray*}
&&\hspace{-0.7cm} \sum_{i=1}^m\sum_{j=1}^n \frac{\partial [S(Y;f)]_{ij}}{\partial Y_{ij}}= \sum_{s_i >0}\left [  \frac{d_i(d_i+1)}{2}f'(s_i) + \left((m-n)d_i+\frac{d_i(d_i-1)}{2} \right) \frac{f(s_i)}{s_i}\right ] \label{thm1} \\
&&+ d_K(m-n+d_K)  f'(0)\mathbbm{1}(s_K=0)+ \sum_{1\leq i \neq j\leq K} d_id_j\frac{s_if(s_i)-s_jf(s_j)}{s_i^2-s_j^2}. \nonumber \\
\end{eqnarray*}
\end{corollary}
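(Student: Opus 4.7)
The plan is to reduce the computation to Shapiro's formula (Lemma~\ref{shap}) through the symmetrization of Lemma~\ref{symmetry}. Define the odd extension $f^*:\mathbb{R}\to\mathbb{R}$ by $f^*(x)=f(x)$ for $x\geq 0$ and $f^*(x)=-f(-x)$ for $x<0$. The condition $f(0)=0$ makes $f^*$ well-defined and continuous, and differentiability of $f$ at each $s_i$ (and at $0$ when $s_K=0$) makes $f^*$ differentiable at every eigenvalue $\pm s_i, 0$ of $Y^*$, with $(f^*)'(0)=f'(0)$. Hence each $\Psi_k$ in Lemma~\ref{shap} collapses to scalar multiplication by $(f^*)'(\lambda_k)$, so $(F^*)'(Y^*;\cdot)$ is a linear map in its direction argument, and $S(\cdot;f)$ is itself Fr\'echet differentiable at $Y$.

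Next I extract the divergence from this derivative. For the single-entry direction $H=e_a e_b^T$, its symmetrized counterpart is $H^*_{ab}=\bigl(\begin{smallmatrix}0 & e_a e_b^T\\ e_b e_a^T & 0\end{smallmatrix}\bigr)$, and the block identity $F^*(Y^*) = \bigl(\begin{smallmatrix}0 & S(Y;f)\\ S(Y;f)^T & 0\end{smallmatrix}\bigr)$ yields $\partial [S(Y;f)]_{ab}/\partial Y_{ab} = \tfrac{1}{2}\langle H^*_{ab}, (F^*)'(Y^*;H^*_{ab})\rangle_F$. Substituting the two-piece formula of Lemma~\ref{shap} and summing over $(a,b)$ reduces the divergence to a sum, over pairs of spectral projectors $P_l,P_k$ of $Y^*$, of the quantities $T_{lk}:=\sum_{a,b}\tr(H^*_{ab} P_l H^*_{ab} P_k)$. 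Writing the projectors in $2\times 2$ block form $P_l=\bigl(\begin{smallmatrix}A & B\\ B^T & C\end{smallmatrix}\bigr)$ and $P_k=\bigl(\begin{smallmatrix}A' & B'\\ B'^T & C'\end{smallmatrix}\bigr)$, a direct calculation yields the key identity
\[
T_{lk}=\tr(A)\tr(C')+\tr(C)\tr(A')+2\langle B,B'\rangle_F.
\]

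It remains to enumerate the projectors and combine the contributions. For each $s_i>0$ two projectors $P_i^{\pm}=\tfrac{1}{2}\bigl(\begin{smallmatrix}U_iU_i^T & \pm U_iV_i^T\\ \pm V_iU_i^T & V_iV_i^T\end{smallmatrix}\bigr)$ appear, while the null-eigenspace projector has top-left block $\bar{U}\bar{U}^T$ and bottom-right block $0$, each augmented by $U_KU_K^T$ and $V_KV_K^T$ respectively when $s_K=0$. Substituting the block traces into the formula for $T_{lk}$ and pairing with the coefficients from Lemma~\ref{shap}: (i) the diagonal terms generate the $\tfrac{d_i(d_i+1)}{2}f'(s_i)$ summands and, when $s_K=0$, the $d_K(m-n+d_K)f'(0)$ term; (ii) the $(P_i^+,P_i^-)$ pair together with the four $(P_i^{\pm},P_0)$ ordered pairs furnish the $(m-n)d_i+\tfrac{d_i(d_i-1)}{2}$ coefficient on $f(s_i)/s_i$; and (iii) the four families $(P_i^{\pm},P_j^{\pm})$ with $i\neq j$ produce cross-coefficients $\tfrac{f(s_i)-f(s_j)}{s_i-s_j}$ (matching signs) and $\tfrac{f(s_i)+f(s_j)}{s_i+s_j}$ (opposite signs), each weighted by $d_id_j/2$. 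The main obstacle is fusing these last four contributions into the single ratio asserted in the corollary; this is resolved by the algebraic identity
\[
\frac{f(s_i)-f(s_j)}{s_i-s_j}+\frac{f(s_i)+f(s_j)}{s_i+s_j}=\frac{2(s_if(s_i)-s_jf(s_j))}{s_i^2-s_j^2}.
\]
The case $s_K=0$ folds in uniformly: the $j=K$ terms of the resulting double sum collapse via $f(0)=0$ to $d_id_Kf(s_i)/s_i$, which exactly match the extra weight from the enlarged null eigenspace contributing through the $f(s_i)/s_i$ cross terms.
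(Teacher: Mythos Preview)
Your proposal is correct and follows essentially the same route as the paper: the odd extension $f^*$, the symmetrization $Y^*$ of Lemma~\ref{symmetry}, and Shapiro's formula from Lemma~\ref{shap} applied to the single-entry directions, followed by a case-by-case trace evaluation over the eigenprojectors of $Y^*$. The only notable difference is organizational: the paper splits the trace computations into two families $T(\mu_l,\mu_k)$ and $G(\mu_l,\mu_k)$ evaluated column-by-column against $U,V,\bar U$, whereas you fuse these into a single block identity $T_{lk}=\tr(A)\tr(C')+\tr(C)\tr(A')+2\langle B,B'\rangle_F$ for the $2\times 2$ block form of the projectors---indeed one checks directly that your $T_{lk}$ equals the paper's $G(\mu_l,\mu_k)+G(\mu_k,\mu_l)+2T(\mu_l,\mu_k)$, so the subsequent enumeration and the algebraic recombination of the $\pm$-sign cross terms are the same in substance.
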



We remark that the differentiability condition on $f$ can be weakened to directional differentiability leading to a more complex divergence formula, as derived in Appendix \ref{pf:divergence}. We choose to present the streamlined version in Corollary \ref{corcor1} to improve the readability. The divergence expression in Corollary \ref{corcor1} originally appears in \cite{CandesEtal2013}. The authors first derive the formula for a matrix $Y$ which is simple and has full rank. Their derivation is based on standard techniques of computing the Jacobian of the SVD \citep{edelman2005matrix, papadopoulo2000estimating}. They then extend the result to general matrices. Here we show that the divergence formula can be derived as a consequence of Lemma~\ref{shap}, and can be generalized to a larger class of functions $f$. We should also mention that the differentiability properties of singular values of a rectangular matrix have been studied in \cite{lewis2005nonsmooth, lewis2005nonsmooth2, drusvyatskiy2015variational}. Those existing results are not applicable, because the current settings are concerned with matrix functions that involve both singular values and singular vectors.

\section{Degrees of Freedom for additive Gaussian models }\label{sec3}

We start by considering the canonical additive Gaussian model :
\begin{equation}
Y=M^{\ast}+\mathcal{E}, \label{model}
\end{equation}
where $Y\in \mathbb{R}^{m\times n}$ is the observed matrix, $M^{\ast} \in \mathbb{R}^{m\times n}$ is the underlying low rank matrix of interest, and $\mathcal{E}=(\epsilon_{ij})_{m\times n}$ is the random noise matrix with $\epsilon_{ij}\overset{\text{iid}}{\sim}N(0, \tau^2)$. 

\subsection{Estimators obtained via spectral regularization} \label{sec:srestimate}

A popular class of low rank matrix estimators are obtained through spectral regularization :
\begin{eqnarray}
S_{\theta}(Y) \in \argmin_{M\in \mathbb{R}^{m \times n}}~ \frac{1}{2}\|Y-M\|^2_F+\sum_{i=1}^n P_{\theta}(\sigma_i), \label{estimator}
\end{eqnarray}
where $\sigma_1\geq \sigma_2 \geq \dots \geq \sigma_n\geq 0$ are the singular values of $M$ and $P_{\theta}: \mathbb{R}^+\rightarrow \mathbb{R}^+$ is a family of sparsity promoting penalty functions indexed by $\theta$. For example, $P_{\theta}(x)=\theta x$ gives the \emph{nuclear norm} penalty. Some non-convex penalty functions include MC+ \citep{zhang2010nearly} and SCAD \citep{fan2001variable}. The optimization problem \eqref{estimator} is closely related to the following problem,
\begin{eqnarray}
s_{\theta}(\B \sigma) \in \argmin_{\B \alpha \in \mathbb{R}^n} \frac{1}{2}\|\B \sigma(Y) - \B \alpha \|^2_2+\sum_{i=1}^n P_{\theta}(\alpha_i),  \label{spectral}
\end{eqnarray}
where $s_{\theta}(\B \sigma)=(s_{\theta}(\sigma_1),\ldots, s_{\theta}(\sigma_n)), \B \alpha=(\alpha_1,\dots, \alpha_n) $, and $\B \sigma(Y)=(\sigma_1(Y),\ldots, \sigma_n(Y)) $ are the singular values of $Y$. Due to the separability in \eqref{spectral}, it is clear that $s_{\theta}(\cdot)$ is the proximal function induced by the penalty $P_{\theta}$:
\begin{align*}
s_{\theta}(u)\in \argmin_{x\in \mathbb{R}}~ \frac{1}{2}(x-u)^2+P_{\theta}(x).
\end{align*}
The problem \eqref{estimator} in fact admits a closed form solution (See Proposition 1 in \cite{mazumder2018matrix}): 
\[
S_{\theta}(Y)=U\diag(s_{\theta}(\B \sigma))V',
\]
where $Y=U\diag(\B \sigma) V'$ is the reduced SVD of $Y$. Since the penalty function $P_{\theta}(\cdot)$ shrinks some singular values to zero, it induces a low rank matrix estimator $S_{\theta}(Y)$. How can one determine the appropriate amount of shrinkage $\theta$? To this end, the following corollary presents 
SURE expressions for a variety of estimators. 

\begin{corollary} \label{cor2}
Consider the spectral regularized estimator $S_{\theta}(Y)$ in \eqref{estimator} under the model \eqref{model}. Assuming $P_{\theta}(\cdot)$ is differentiable on $(0, \infty)$ and $P_{\theta}(0)=0$, we introduce the following quantity ($\phi_P$) that
measures the amount of concavity of $P_{\theta}(\cdot)$:
\begin{equation*}
\phi_P := \; \inf_{\alpha, \alpha' >0} \;\;  \frac{P'_{\theta}(\alpha)  - P'_{\theta}(\alpha')}{\alpha - \alpha'},
\end{equation*}
where $P'_{\theta}(\alpha)$ denotes the derivative of $P_{\theta}(\alpha)$ wrt $\alpha$ on $\alpha >0$. Suppose $\phi_P+1>0$, then
\begin{eqnarray}
df(S_{\theta}(Y))=\mathbb{E} \Bigg [ \sum_{i=1}^n  \Big ( s'_{\theta}(\sigma_i)+(m-n)\frac{s_{\theta}(\sigma_i)}{\sigma_i} \Big)+2\sum_{\substack {i \neq j\\ i, j=1}}^n\frac{\sigma_is_{\theta}(\sigma_i)}{\sigma^2_i-\sigma^2_j} \Bigg ], \label{dfformula}
\end{eqnarray}
where $\sigma_1\geq \ldots \geq \sigma_n\geq 0$ are the singular values of $Y$.
\end{corollary}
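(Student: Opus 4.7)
The plan is to combine Corollary~\ref{corcor1} (the divergence formula for matrix valued spectral functions) with Stein's Lemma. By the closed form $S_\theta(Y) = U\diag(s_\theta(\B \sigma))V'$ stated in the paragraph just above the corollary, the estimator is precisely the matrix valued spectral function $S(Y;s_\theta)$ in the notation of Section~\ref{sec2}. Thus Corollary~\ref{corcor1} will apply as soon as (a) its hypotheses on $s_\theta$ ($s_\theta(0)=0$ and differentiability at each distinct singular value of $Y$) hold almost surely, and (b) the regularity conditions underlying Stein's Lemma are verified.

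First I would establish the requisite properties of $s_\theta$ from $\phi_P + 1 > 0$. For $u > 0$ with $s_\theta(u) > 0$, first order optimality in \eqref{spectral} reads $u = x + P'_\theta(x)$ at $x = s_\theta(u)$; by the definition of $\phi_P$, the map $x \mapsto x + P'_\theta(x)$ is strictly increasing with slope at least $1 + \phi_P > 0$, hence bijective onto its image with a $(1+\phi_P)^{-1}$-Lipschitz inverse. This shows that $s_\theta$ is single valued and globally $(1+\phi_P)^{-1}$-Lipschitz on $\mathbb{R}^+$, and differentiable wherever $P_\theta'$ is. Moreover $s_\theta(0) = 0$: since $P_\theta \geq 0$ and $P_\theta(0) = 0$, the candidate $x=0$ achieves the value $0$ in the minimization at $u=0$, and uniqueness (from strong convexity) forces $s_\theta(0)=0$. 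Lipschitz continuity of the scalar $s_\theta$ transfers to Lipschitz continuity of the matrix spectral function $S_\theta$ on $(\mathbb{R}^{m\times n}, \|\cdot\|_F)$, so $S_\theta$ is weakly differentiable with essentially bounded weak derivative; the Gaussian integrability hypotheses for Stein's Lemma (Appendix~\ref{SURE:condition}) then follow immediately.

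Next, since $Y$ has a Gaussian density on $\mathbb{R}^{m\times n}$, almost surely $Y$ is simple ($\sigma_1 > \cdots > \sigma_n$), of full column rank ($\sigma_n > 0$), and its singular values avoid the at-most-Lebesgue-null set where $s_\theta$ fails to be differentiable. On this full-measure event, Corollary~\ref{corcor1} applies with $K = n$, $d_i \equiv 1$, and $s_K > 0$, giving
\begin{equation*}
\nabla \cdot S_\theta(Y) = \sum_{i=1}^n \left[ s'_\theta(\sigma_i) + (m-n)\frac{s_\theta(\sigma_i)}{\sigma_i} \right] + \sum_{i \neq j} \frac{\sigma_i s_\theta(\sigma_i) - \sigma_j s_\theta(\sigma_j)}{\sigma_i^2 - \sigma_j^2}.
\end{equation*}
A routine $i \leftrightarrow j$ symmetrization, using antisymmetry of $\sigma_i^2 - \sigma_j^2$, rewrites the double sum as $2\sum_{i \ne j} \sigma_i s_\theta(\sigma_i)/(\sigma_i^2 - \sigma_j^2)$, reproducing \eqref{dfformula} after taking expectations via Stein's Lemma.

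The main obstacle I anticipate is the rigorous justification of Stein's Lemma in spite of $s_\theta$, and hence $S_\theta$, being at best almost-everywhere differentiable in the classical sense. The remedy is standard but deserves care: Lipschitz continuity of $S_\theta$ (established in the second step) guarantees weak differentiability with bounded weak partial derivatives, and the pointwise formula produced by Corollary~\ref{corcor1} will coincide with the weak divergence Lebesgue-almost everywhere on $\mathbb{R}^{m \times n}$. Because the Gaussian measure is absolutely continuous with respect to Lebesgue measure, the expectation in \eqref{dfformula} is unchanged by replacing the classical derivatives with the weak derivatives, and it equals $df(S_\theta(Y))$ by the version of Stein's identity recorded in Appendix~\ref{SURE:condition}.
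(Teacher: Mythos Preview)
Your proposal is correct and follows essentially the same route as the paper's proof: establish Lipschitz continuity of $S_\theta$ (the paper simply cites Proposition~3 of \cite{mazumder2018matrix} for this, whereas you sketch the derivation from $\phi_P+1>0$ directly), invoke Lipschitz $\Rightarrow$ weak differentiability to justify Stein's Lemma, note that $Y$ is almost surely simple and full rank with singular values at differentiability points of $s_\theta$, and then read off the divergence from Corollary~\ref{corcor1}. Your treatment is more self-contained but not a different argument.
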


The recent work \cite{hansen2018stein} has derived the same \df formula as in \eqref{dfformula}. However, the result in \cite{hansen2018stein} holds for a different class of matrix estimators from the one in Corollary \ref{cor2}. Specifically,  Theorem 1 in \cite{hansen2018stein} requires the function $s_{\theta}(\cdot)$ to be differentiable but allows different $s_{\theta}(\cdot)$ applied to each of the singular value. In contrast, Corollary \ref{cor2} assumes the same $s_{\theta}(\cdot)$  across the singular values, yet allows for non-differentiable $s_{\theta}(\cdot)$. We discuss a few examples below. 

The condition $\phi_P+1>0$ holds for many penalty functions. First of all, any convex function differentiable over $(0,\infty)$, has non-negative $\phi_P$. In particular, for $P_{\theta}(\alpha)=\theta |\alpha|$, it is straightforward to confirm that $s_{\theta}(\sigma)=(\sigma-\theta)_{+}$. This recovers the \df~formula of the singular value thresholding estimator studied in \cite{CandesEtal2013}. Moreover, some families of non-convex penalty functions satisfy $\phi_P+1>0$ as well. Examples include MC+  ($\gamma >1$) and SCAD ($a>2$), where $\gamma, a$ are tuning parameters associated with the two functions, respectively. See Section \ref{simulation:sec} for the explicit expressions. Non-convex penalties 
are well known to attenuate the estimation bias caused by convex sparsity promotion functions~\cite{fan2001variable,mazumder2011sparsenet,mazumder2018matrix}. Note that some popular non-convex penalties like $P_{\theta}(\alpha)=\theta |\alpha|^q~(0\leq q <1)$ do not satisfy the condition $\phi_P + 1>0$.  
 In particular, when $q=0$, $P_{\theta}(\alpha)=\theta \mathbbm{1}(\alpha \neq 0)$ gives the widely known rank regularized estimator
\[
S_{\theta}(Y)=\sum_{i=1}^n\sigma_i \mathbbm{1}(\sigma_i > \sqrt{2\theta})\B u_i\B v_i'. 
\]
Due to the hard thresholding rule on the singular values, $S_{\theta}(Y)$ is not a continuous function of $Y$, hence Stein's Lemma can not be directly applied. 
The following corollary (the proof is presented in Appendix \ref{pf:l0}) derives an expression for the degrees of freedom of the rank regularized estimator.

\begin{corollary} \label{cor3add}
Consider the rank regularized matrix estimator $S_{\theta}(Y)=\sum_{i=1}^n\sigma_i \mathbbm{1}(\sigma_i > \sqrt{2\theta})\B u_i\B v_i'$ under the model \eqref{model}, then
\begin{equation}\label{exp-corr3add}
\begin{aligned}
&&\hspace{-0.8cm}df(S_{\theta}(Y))=\mathbb{E} \sum_{\substack {i \neq j\\ i, j=1}}^n\Big( \frac{\sigma^2_i\mathbbm{1}(\sigma_i> \sqrt{2\theta})}{\sigma_i^2-\sigma^2_j}+ \frac{\sigma^2_j\mathbbm{1}(\sigma_j> \sqrt{2\theta})}{\sigma_j^2-\sigma^2_i}\Big )   \\
&&~~~~~+\sum_{i=1}^n \left [(m-n+1)P(\sigma_i >\sqrt{2\theta})+\sqrt{2\theta}f_{\sigma_i}(\sqrt{2\theta})\right ],
\end{aligned}
\end{equation}
where $f_{\sigma_i}(\cdot)$ is the marginal density function of $Y$'s $i$th singular value $\sigma_i$.
\end{corollary}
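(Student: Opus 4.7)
The rank-regularized estimator corresponds to the hard-threshold proximal operator $s_\theta(\sigma)=\sigma\,\mathbbm{1}(\sigma>\sqrt{2\theta})$, which is discontinuous at $\sqrt{2\theta}$, so Stein's lemma and Corollary~\ref{cor2} do not apply directly. My plan is to approximate $s_\theta$ by a family of absolutely continuous proximal operators $s_\theta^{\epsilon}$ that do satisfy the hypotheses of Corollary~\ref{cor2}, compute $df(S_\theta^{\epsilon}(Y))$ from \eqref{dfformula}, and then pass $\epsilon\downarrow 0$, using the non-central Wishart-type joint singular-value density of $Y$ to justify each limit. A convenient choice is the MC+ proximal operator with threshold $\sqrt{2\theta}$ and concavity $\gamma>1$ (so that $\phi_P+1=1-1/\gamma>0$): it is continuous and piecewise linear, and as $\gamma\downarrow 1$ it converges pointwise to $s_\theta$ everywhere except at the jump.

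For each fixed $\epsilon$, Corollary~\ref{cor2} provides a formula for $df(S_\theta^{\epsilon}(Y))$, and I would pass to the limit term by term. The contribution $(m-n)s_\theta^{\epsilon}(\sigma_i)/\sigma_i$ is bounded by $m-n$ and converges pointwise to $(m-n)\mathbbm{1}(\sigma_i>\sqrt{2\theta})$, yielding $(m-n)P(\sigma_i>\sqrt{2\theta})$ by bounded convergence. The cross term $\sigma_i s_\theta^{\epsilon}(\sigma_i)/(\sigma_i^2-\sigma_j^2)$ has an apparent pole at $\sigma_i=\sigma_j$; this is absorbed by the Vandermonde weight $\prod_{k<\ell}|\sigma_k^2-\sigma_\ell^2|$ appearing in the singular-value density, which supplies an integrable envelope and gives dominated convergence to $\sigma_i^2\mathbbm{1}(\sigma_i>\sqrt{2\theta})/(\sigma_i^2-\sigma_j^2)$. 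The delicate term is $\mathbb{E}\,s_\theta^{\epsilon\prime}(\sigma_i)$: its pointwise limit $\mathbbm{1}(\sigma_i>\sqrt{2\theta})$ misses the mass $\sqrt{2\theta}$ deposited by the derivative at the jump. Integrating by parts against the marginal density $f_{\sigma_i}$ (using $s_\theta^{\epsilon}(0)=0$ and the Gaussian tail of $f_{\sigma_i}$) and then passing to the limit gives
\[
\mathbb{E}\,s_\theta^{\epsilon\prime}(\sigma_i)=-\int_0^\infty s_\theta^{\epsilon}(\sigma)\,f_{\sigma_i}'(\sigma)\,d\sigma\;\longrightarrow\;-\int_{\sqrt{2\theta}}^\infty\sigma\,f_{\sigma_i}'(\sigma)\,d\sigma=\sqrt{2\theta}\,f_{\sigma_i}(\sqrt{2\theta})+P(\sigma_i>\sqrt{2\theta}),
\]
where the last equality is a second integration by parts. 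Adding this to the $(m-n)P(\sigma_i>\sqrt{2\theta})$ piece recovers the claimed $(m-n+1)P(\sigma_i>\sqrt{2\theta})+\sqrt{2\theta}\,f_{\sigma_i}(\sqrt{2\theta})$.

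To upgrade left-hand-side convergence I would work with the covariance definition \eqref{original} rather than Stein's identity, since the latter is not available for $S_\theta$. The uniform bound $\|S_\theta^{\epsilon}(Y)\|_F\le\|Y\|_F$ makes the family $\{[S_\theta^{\epsilon}(Y)]_{ij}Y_{ij}\}_\epsilon$ uniformly integrable, and combined with almost-sure pointwise convergence (since $P(\sigma_i=\sqrt{2\theta})=0$ by absolute continuity of the singular-value law), dominated convergence gives $df(S_\theta^{\epsilon}(Y))\to df(S_\theta(Y))$. The main obstacle, which is where the Wishart structure is essential, is the two integration-by-parts identities above: they require enough regularity and decay of $f_{\sigma_i}$ at $0$ and $\infty$, interior control of $f_{\sigma_i}'$, and the integrable envelope for the cross term. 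All three follow from the explicit non-central real Wishart structure of $YY'$, whose joint eigenvalue density is real-analytic on the open Weyl chamber, Gaussian-tailed at infinity, and carries the Vandermonde factor that cancels the off-diagonal singularities.
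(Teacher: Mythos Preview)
Your proposal is correct and matches the paper's approach: MC+ approximation with $\gamma\downarrow 1$, convergence of the left-hand side via the covariance definition and dominated convergence, Corollary~\ref{cor2} for each approximant, and the Wishart singular-value density (the content of the paper's Lemma~\ref{key_key}) to control the cross terms. The only cosmetic differences are that the paper obtains the limit of $\mathbb{E}\,s_\theta^{\epsilon\prime}(\sigma_i)$ directly as the CDF difference quotient $\tfrac{\gamma}{\gamma-1}\big[F_{\sigma_i}(\sqrt{2\theta}\gamma)-F_{\sigma_i}(\sqrt{2\theta})\big]\to\sqrt{2\theta}\,f_{\sigma_i}(\sqrt{2\theta})$ rather than via your two integrations by parts, and it handles the cross terms through an explicit eight-case decomposition instead of a single dominated-convergence step with the envelope $g_{\sqrt{2\theta},\gamma}(\sigma)\le\sigma$.
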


If we ignore the regularity conditions and use Equation \eqref{alter} directly, we will get an incorrect estimate of the \emph{df} --- specifically, the expression we obtain (by applying Corollary \ref{corcor1}) will not include the term $\sum_{i=1}^n\sqrt{2\theta}f_{\sigma_i}(\sqrt{2\theta})$ above.  
To arrive at~\eqref{exp-corr3add} we construct a sequence of matrix valued spectral functions (induced by MC+ penalty) which satisfy the conditions of Corollary \ref{cor2} and whose \emph{df} converges to the \df~of the rank regularized matrix estimator. We then combine the formula in Corollary \ref{cor2} with a careful limiting argument that hinges on the eigenvalue distribution of a central Wishart matrix to derive the \df~of the rank regularized estimator.

Note that when $P_{\theta}(\alpha)=\theta |\alpha|^q$ with $0<q<1$, problem \eqref{spectral} does not admit an explicit solution. Introducing the notation
\begin{align}
\label{etaq:def}
\eta_q(\sigma;\theta)=\argmin_{x \in \mathbb{R}}\frac{1}{2}|\sigma-x|^2+\theta |x|^q, 
\end{align}
we have $s_{\theta}(\sigma)=\eta_q(\sigma;\theta)$. According to Lemmas 5 and 6 in \cite{zheng2017lp}, the function $\eta_q(\sigma;\theta)$ has a 
jump discontinuity:
\begin{align}
&\eta_q(\sigma;\theta)=0, \mbox{~for~~} 0\leq \sigma < c_q \theta^{1/(2-q)}, \nonumber \\
& \eta_q(c_q \theta^{1/(2-q)};\theta)=[2(1-q)\theta]^{1/(2-q)}, \nonumber \\
&c_q=[2(1-q)]^{1/(2-q)}+q[2(1-q)]^{(q-1)/(2-q)}. \label{cq:constant}
\end{align}
Hence $s_{\theta}(\sigma)$ is not continuous. Similar to the 
rank regularized estimator, Stein's Lemma is not applicable to the case $P_{\theta}(\alpha)=\theta |\alpha|^q (0<q<1)$. We adapt the approach used in the proof of Corollary \ref{cor3add} to derive the \df for the case $0<q<1$ -- the result is presented in the following corollary, the proof of which is in Appendix \ref{pf:lq}.

\begin{corollary} \label{cor3done}
Consider the matrix estimator $S_{\theta}(Y)$ in \eqref{estimator} with $P_{\theta}(\alpha)=\theta |\alpha|^q (0<q<1)$ under the model \eqref{model}, then
\begin{eqnarray*}
&&\hspace{-0.8cm}df(S_{\theta}(Y))=\mathbb{E} \sum_{\substack {i \neq j\\ i, j=1}}^n \frac{\sigma_i\eta_q(\sigma_i;\theta)-\sigma_j\eta_q(\sigma_j;\theta)}{\sigma_i^2-\sigma^2_j}  \\
&&+\mathbb{E} \sum_{i=1}^n \left [(m-n)\frac{\eta_q(\sigma_i;\theta)}{\sigma_i}+\eta'_q(\sigma_i;\theta)+[2(1-q)\theta]^{1/(2-q)} f_{\sigma_i}(c_q\theta^{1/(2-q)})\right ],
\end{eqnarray*}
where $f_{\sigma_i}(\cdot)$ is the marginal density function of $Y$'s $i$th singular value $\sigma_i$; $\eta'_q(\sigma_i;\theta)$ is the partial derivative of $\eta_q(\sigma_i;\theta)$ with respect to $\sigma_i$. 
\end{corollary}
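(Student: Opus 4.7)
The plan is to mirror the proof strategy of Corollary \ref{cor3add}. From \eqref{cq:constant}, $\eta_q(\cdot;\theta)$ is smooth on $(0,\infty)$ except for a single jump at $\sigma^\star := c_q\theta^{1/(2-q)}$, where it jumps from $0$ to $J := [2(1-q)\theta]^{1/(2-q)}$. Because of this discontinuity, $S_\theta(Y)$ is not weakly differentiable, and Stein's Lemma cannot be applied directly. I would therefore approximate $\eta_q(\cdot;\theta)$ by a sequence of continuous proximal maps $s_{\theta,\epsilon}(\cdot)$ that fit the hypotheses of Corollary \ref{cor2}, apply formula \eqref{dfformula}, and transfer the identity to $S_\theta(Y)$ via a limiting argument anchored in \eqref{decomp-1-0}.

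The first step is to construct $s_{\theta,\epsilon}(\cdot)$ so that it agrees with $\eta_q(\cdot;\theta)$ outside the window $(\sigma^\star-\epsilon,\sigma^\star)$ and rises monotonically from $0$ to $J$ inside it, with the shape chosen so that $s_{\theta,\epsilon}$ is the proximal map of a penalty $P_{\theta,\epsilon}$ satisfying $\phi_{P_{\theta,\epsilon}}+1>0$; an MC+-type bridge analogous to the one used in Corollary \ref{cor3add} (reshaped to meet the $\ell_q$ branches smoothly outside the jump) serves this role. Writing $S_{\theta,\epsilon}(Y)$ for the induced matrix estimator, Corollary \ref{cor2} gives an explicit formula for $\mathit{df}(S_{\theta,\epsilon}(Y))$ built out of $s_{\theta,\epsilon}$ and $s'_{\theta,\epsilon}$. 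Since $\{\sigma_i=\sigma^\star\}$ has zero probability, $S_{\theta,\epsilon}(Y)\to S_\theta(Y)$ almost surely, and the uniform bound $\|S_{\theta,\epsilon}(Y)-S_\theta(Y)\|_F\leq nJ$ lets me apply dominated convergence to the two non-divergence terms in \eqref{decomp-1-0} to conclude $\mathit{df}(S_{\theta,\epsilon}(Y))\to \mathit{df}(S_\theta(Y))$ as $\epsilon\downarrow 0$.

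Next, I would pass to the limit termwise in the formula from Corollary \ref{cor2}. The summands $(m-n)s_{\theta,\epsilon}(\sigma_i)/\sigma_i$ and $[\sigma_is_{\theta,\epsilon}(\sigma_i)-\sigma_js_{\theta,\epsilon}(\sigma_j)]/(\sigma_i^2-\sigma_j^2)$ converge pointwise a.s.\ to their $\eta_q$ analogues; dominated convergence applies because $|s_{\theta,\epsilon}|\leq |\eta_q|+J$ and the joint singular value density of $Y$ makes $1/(\sigma_i^2-\sigma_j^2)$ integrable against these bounded numerators. The derivative term I split as
\begin{equation*}
\mathbb{E}\,s'_{\theta,\epsilon}(\sigma_i)=\mathbb{E}\bigl[\eta'_q(\sigma_i;\theta)\mathbbm{1}(\sigma_i\notin(\sigma^\star-\epsilon,\sigma^\star))\bigr] + \int_{\sigma^\star-\epsilon}^{\sigma^\star} s'_{\theta,\epsilon}(u)\,f_{\sigma_i}(u)\,du.
\end{equation*}
The first piece tends to $\mathbb{E}\,\eta'_q(\sigma_i;\theta)$ by dominated convergence; since $\int_{\sigma^\star-\epsilon}^{\sigma^\star} s'_{\theta,\epsilon}(u)\,du=J$ and $f_{\sigma_i}$ is continuous at $\sigma^\star$, the second piece tends to $J\cdot f_{\sigma_i}(\sigma^\star)$. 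Assembling these limits recovers the formula in the statement.

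The main obstacle is justifying this last, singular limit and its supporting ingredients: continuity and local boundedness of the marginal density $f_{\sigma_i}$ at $\sigma^\star$, together with uniform-in-$\epsilon$ envelopes that legitimize the dominated-convergence steps. Continuity of $f_{\sigma_i}$ is inherited from the closed-form joint density of the singular values of a Gaussian matrix, which contains the smooth factor $\prod_i\sigma_i^{m-n}\prod_{i<j}(\sigma_i^2-\sigma_j^2)$ times a Gaussian exponential and yields smooth, bounded marginals — the very same Wishart-type regularity leveraged in Corollary \ref{cor3add}. With this in hand, the remaining domination reduces to Gaussian moment bounds and the elementary estimates $|s_{\theta,\epsilon}|\leq |\eta_q|+J$ and boundedness of $\eta'_q$ away from the smoothing window (a short computation from the defining equation $x+q\theta x^{q-1}=\sigma$ shows $\eta'_q(\sigma^\star+;\theta)=2/(2-q)$ and $\eta'_q(\sigma)\to 1$ as $\sigma\to\infty$).
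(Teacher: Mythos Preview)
Your approach is correct and follows the same overall strategy as the paper --- approximate the discontinuous spectral function by a Lipschitz one, invoke SURE via the divergence formula, and pass to the limit using the Wishart singular-value density. The organization, however, differs. The paper first decomposes $\eta_q(\sigma;\theta)=\zeta_q(\sigma;\theta)+\xi_q(\sigma;\theta)$ with $\zeta_q(\sigma;\theta)=J\,\mathbbm{1}(\sigma>\sigma^\star)$ a pure step and $\xi_q$ Lipschitz; it then handles the Lipschitz piece directly (Stein's Lemma plus Corollary~\ref{corcor1}) and treats the step piece by a piecewise-linear bridge, so that the limiting computation is literally the one already carried out for Corollary~\ref{cor3add}. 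You instead smooth $\eta_q$ in one shot across the jump. The paper's decomposition lets it simply quote the $\ell_0$ calculation rather than redo the eight-case cross-term analysis; your route is more self-contained but requires the same casework for the limits of $\sum_{i\neq j}\sigma_i s_{\theta,\epsilon}(\sigma_i)/(\sigma_i^2-\sigma_j^2)$ (your one-line dominated-convergence claim is fine once you note $0\le s_{\theta,\epsilon}(\sigma)\le\sigma$, which gives the integrable envelope $\sigma_i^2/|\sigma_i^2-\sigma_j^2|$ from Lemma~\ref{key_key}).

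One point worth making explicit: you invoke Corollary~\ref{cor2}, which requires $s_{\theta,\epsilon}$ to be the proximal map of a penalty with $\phi_P+1>0$. This is true for your construction --- setting $P'_{\theta,\epsilon}(x)=s_{\theta,\epsilon}^{-1}(x)-x$ and checking that it is continuous at $x=J$ and that $P''\ge\min(-q/2,\,-1+\epsilon/J)>-1$ piecewise --- but it is not immediate from the phrase ``an MC+-type bridge,'' since the $\ell_q$ branch by itself fails $\phi_P+1>0$. An easier alternative, and the one the paper takes, is to bypass Corollary~\ref{cor2} entirely: Lipschitzness of $s_{\theta,\epsilon}$ (obvious from your construction) makes $S_{\theta,\epsilon}(Y)$ Lipschitz, so Stein's Lemma applies and the divergence comes from Corollary~\ref{corcor1} directly.
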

By a quick inspection, we can find that setting $q=1, 0$ in the \df formula of Corollary \ref{cor3done} recovers the \df formula for the case $q=1$ and $q=0$ which are already derived in Corollaries \ref{cor2} and \ref{cor3add} respectively. Hence Corollary \ref{cor3done} presents a unified \df~formula for the family $0\leq q \leq 1$. Moreover, it can be easily verified that the term $\sum_{i=1}^n[2(1-q)\theta]^{1/(2-q)} f_{\sigma_i}(c_q\theta^{1/(2-q)})$ in the above expression will be missed if we apply Stein's Lemma and Corollary \ref{corcor1} directly to derive the \df. This is further confirmed from the simulation results presented in Figure \ref{fig:one}. 
\begin{figure}[!htb]
\centering
\begin{tabular}{cc}
\includegraphics[width=4.5cm, height=4.cm,trim = 5mm 11mm  10mm 20mm, clip]{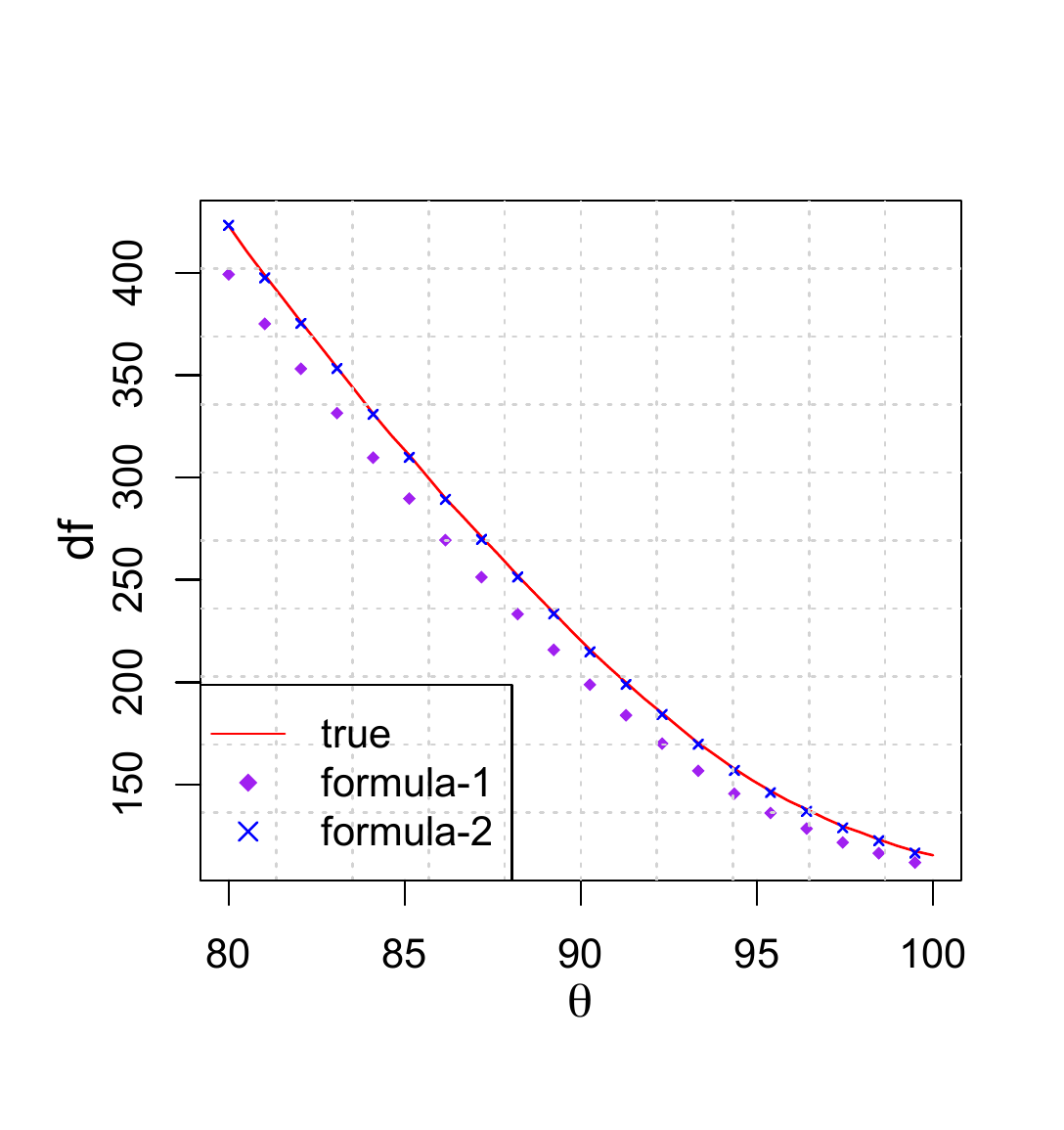} &
\includegraphics[width=4.5cm, height=4.cm, trim = 10mm 11mm  10mm 20mm, clip]{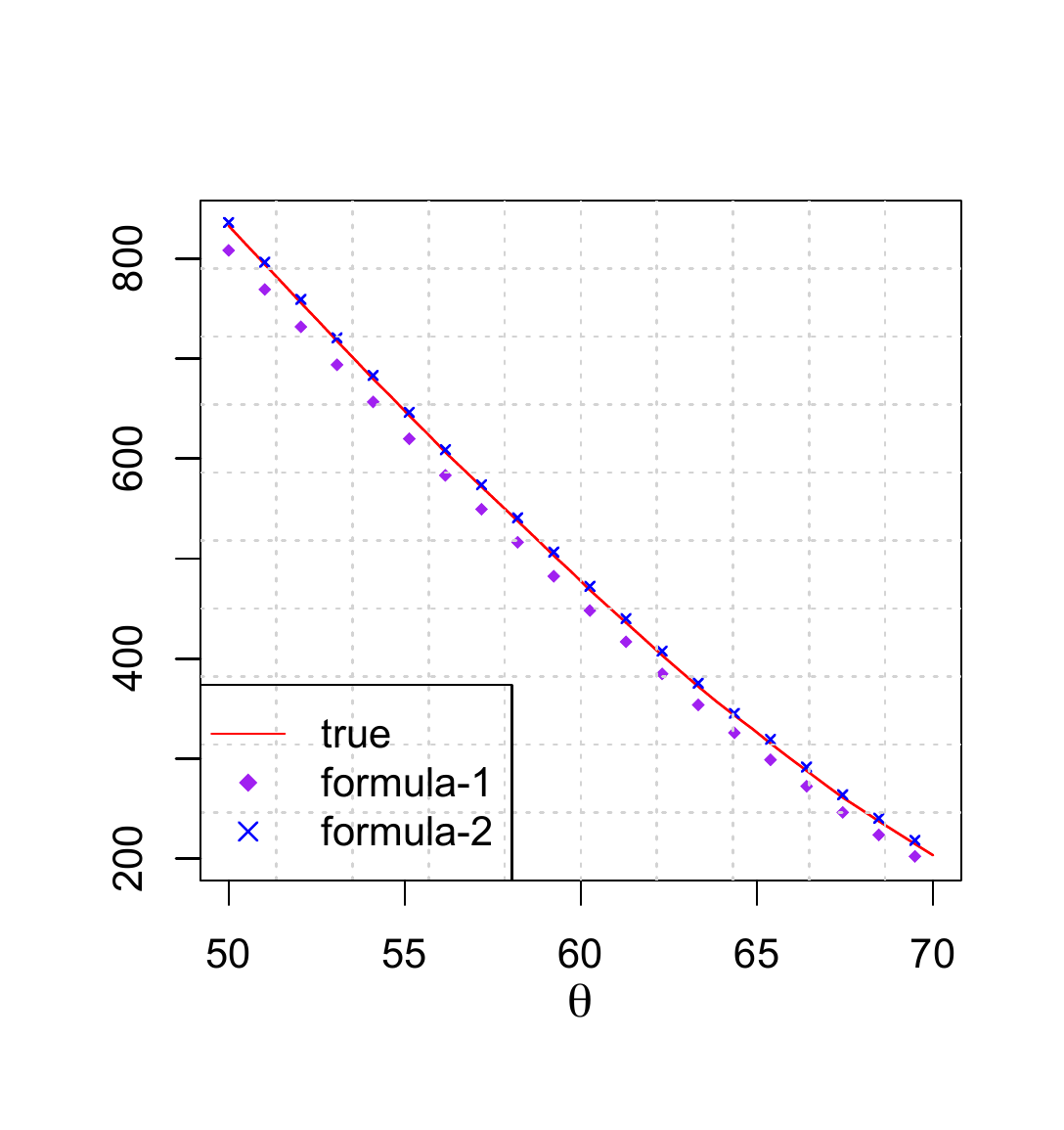}  
\end{tabular}
\vspace{-1em}
\caption{{\small{The df computation of $S_{\theta}(Y)$ with $P_{\theta}(\alpha)=\theta|\alpha|^q$ for $q=0$ (left), and $q=0.1$ (right). The true df (red curve) is computed according to the definition of df in \eqref{original}; the formula-1 (purple diamond) denotes the df obtained by using \eqref{alter} directly; the formula-2 (blue cross) represents the df derived from Corollary \ref{cor3done}. In this simulation, we set $n=m=50, M^*=5\M{1}\M{1}', \tau=1$. The df is calculated by monte carlo simulation over 10000 repetitions.}} } \label{fig:one}
\end{figure}
\vspace{-1em}
\subsection{Reduced rank estimators}
We now consider rank constrained estimators of the form:
\begin{eqnarray}
C_K(Y) \in \argmin_{\text{rank}(M)\leq K} \|Y-M\|^2_F, \label{rank}
\end{eqnarray}
for some positive integer $K\leq n$. The Eckart-Young Theorem \citep{eckart1936approximation} shows that 
\[
C_K(Y)=\sum_{i=1}^K\sigma_i \B u_i \B v_i',
\] 
where $\sigma_1\geq\cdots \geq \sigma_K$ are the largest $K$ singular values of $Y$, and $\{\B u_i, \B v_i\}_{i=1}^K$ are the corresponding singular vectors. Here, $K$ controls the amount of regularization. The choice of $K$ can be guided by an expression for the \emph{df} of $C_K(Y)$, as presented below.

\begin{corollary} \label{cor3}
For the reduced rank estimator $C_K(Y)$ in \eqref{rank} under the model \eqref{model}, 
\begin{eqnarray}
df(C_K(Y))=
\begin{cases}
\mathbb{E}\Big[     (m+n-K)K+2\sum_{i=1}^K\sum_{j=K+1}^n\frac{\sigma_j^2}{\sigma_i^2-\sigma_j^2} \Big ],  &\text{if }K<n \label{rform}\\
mn & \text{if } K=n
\end{cases}
\end{eqnarray}
where $\sigma_1\geq \ldots \geq \sigma_n\geq 0$ are the singular values of $Y$.
\end{corollary}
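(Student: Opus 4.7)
The plan is to evaluate the pointwise divergence of $C_K(Y)$ via Corollary \ref{corcor1} at a generic $Y$ and then invoke Stein's Lemma to turn this into the \emph{df}.

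When $K=n$, $C_n(Y)=Y$ gives $\partial[C_n(Y)]_{ij}/\partial Y_{ij}=1$, so $df(C_n(Y))=mn$. For $K<n$, I would work on the open full-measure set $\mathcal G=\{Y:\sigma_K(Y)>\sigma_{K+1}(Y)\}$, on which standard SVD perturbation theory makes $Y\mapsto C_K(Y)$ smooth under the eigenvalue-gap condition. Under the Gaussian model almost every $Y$ lies in $\mathcal G$ and has $n$ distinct positive singular values. At such a $Y$ one can choose a smooth $f:\mathbb R^+\to\mathbb R$ with $f(0)=0$ that equals $x$ in a neighborhood of $\{\sigma_1,\ldots,\sigma_K\}$ and vanishes in a neighborhood of $\{\sigma_{K+1},\ldots,\sigma_n\}$, which is possible precisely because $\sigma_K>\sigma_{K+1}$. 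Locally around $Y$ one then has the identity $C_K=S(\,\cdot\,;f)$, so $\nabla\cdot C_K(Y)=\nabla\cdot S(Y;f)$. Corollary \ref{corcor1} with all multiplicities $d_i=1$, and with $f'(\sigma_i)=f(\sigma_i)/\sigma_i=1$ for $i\leq K$ while both vanish for $i>K$, yields
\begin{align*}
\nabla\cdot C_K(Y)=K(m-n+1)+K(K-1)+2\sum_{i\leq K<j}\frac{\sigma_i^2}{\sigma_i^2-\sigma_j^2}.
\end{align*}
Rewriting using $\sigma_i^2/(\sigma_i^2-\sigma_j^2)=1+\sigma_j^2/(\sigma_i^2-\sigma_j^2)$ converts this into the claimed pointwise expression $(m+n-K)K+2\sum_{i\leq K,\,j>K}\sigma_j^2/(\sigma_i^2-\sigma_j^2)$.

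The main obstacle is justifying Stein's Lemma for $C_K$, which is genuinely discontinuous across the exceptional set $\mathcal G^c=\{\sigma_K=\sigma_{K+1}\}$. The structural fact to exploit is that a repeated singular value of a real rectangular matrix imposes at least two independent algebraic conditions (the classical eigenvalue-repulsion count), so $\mathcal G^c$ has codimension at least $2$ in $\mathbb R^{m\times n}$ and zero $(mn-1)$-dimensional Hausdorff measure. Consequently, almost every line parallel to a coordinate axis misses $\mathcal G^c$ entirely, so $C_K$ is absolutely continuous along such lines; this provides the weak differentiability required by Stein's Lemma and ensures the distributional divergence agrees with the classical one almost everywhere, so no singular boundary term appears --- in contrast to the fixed-threshold jump of Corollary \ref{cor3add}, whose discontinuity set has codimension $1$ and does produce the explicit boundary term $\sqrt{2\theta}\,f_{\sigma_i}(\sqrt{2\theta})$. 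The apparent singularity of $\sigma_j^2/(\sigma_i^2-\sigma_j^2)$ at $\sigma_i=\sigma_j$ is integrable under the Gaussian law because the joint density of the singular values contains the Vandermonde factor $\prod_{i<j}|\sigma_i^2-\sigma_j^2|$, which vanishes linearly on $\mathcal G^c$ and exactly cancels the singularity. Together with the trivial bound $\|C_K(Y)\|_F\leq\|Y\|_F$ for the integrability side, Stein's Lemma then applies and taking expectations of the pointwise divergence yields the stated formula.
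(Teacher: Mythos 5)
Your divergence computation matches the paper's Lemma \ref{divergence:cmp}: the paper uses a piecewise-linear surrogate $v$ where you use a smooth cutoff $f$, but both localize $C_K$ as $S(\cdot;f)$ near a gapped $Y$, invoke Corollary \ref{corcor1} with all $d_i=1$, and arrive at $(m+n-K)K+2\sum_{i\le K<j}\sigma_j^2/(\sigma_i^2-\sigma_j^2)$, so that part is correct. Where you genuinely depart from the paper is the justification of Stein's lemma. You argue that $C_K$ is itself weakly differentiable: the degenerate set $\{\sigma_K=\sigma_{K+1}\}$ has codimension two, so almost every coordinate line misses it, $C_K$ is absolutely continuous on lines, the a.e.\ gradient is locally integrable because the Vandermonde factor cancels the $1/(\sigma_K-\sigma_{K+1})$ blow-up, and hence no singular boundary term arises --- this is essentially the direct weak-differentiability route the paper attributes to \cite{hansen2018stein}, and it buys a short, conceptually transparent explanation of why $C_K$ differs from the fixed-threshold estimator of Corollary \ref{cor3add} (codimension two versus codimension one). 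The paper deliberately avoids asserting weak differentiability (having shown in Section \ref{verifying:regularity:con} that $C_K$ is discontinuous and not Lipschitz off the degenerate set): it instead smooths by Gaussian convolution, $g_h(Y)=\mathbb{E}_Z[C_K(Y+hZ)]$, proves $df(g_h(Y))\rightarrow df(C_K(Y))$, computes $df(g_h(Y))$ exactly, and passes $h\downarrow 0$ using the gap-dependent Lipschitz bound of Lemma \ref{loc:lip}, the integrability Lemma \ref{key_key}, and the Wishart-density estimate $P(\sigma_K-\sigma_{K+1}\le h^{2/3})=O(h^{4/3})$; this is longer but fully self-contained, and the convolution device is reusable beyond this setting. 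One caveat before your version is complete: the two load-bearing assertions --- that the repeated-singular-value set of real $m\times n$ matrices has zero $(mn-1)$-dimensional Hausdorff measure, and that absolute continuity on almost every line together with local integrability of the \emph{full} a.e.\ gradient (not only the diagonal partials; here the bound $\mathcal{L}(Y)=\sigma_K(Y)/(\sigma_K(Y)-\sigma_{K+1}(Y))$ from Lemma \ref{loc:lip} combined with the argument of Lemma \ref{key_key} does the job) yields the weak differentiability required in \eqref{alter} --- are true but are invoked by analogy rather than proved, so they need an explicit argument or citation to carry the proof.
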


The proof of Corollary \ref{cor3} can be found in Appendix \ref{pf:rank}. The term $(m+n-K)K$ appearing in the formula of \emph{df} equals the number of free parameters in the specification of a $m\times n$ matrix with rank $K$. Corollary \ref{cor3} demonstrates that the degrees of freedom of $C_K(Y)$ is typically larger than the number of free parameters (when $K<n$). 

The expression inside the expectation in~\eqref{rform} has been proved equal to the divergence $\nabla \cdot C_K(Y)$ in \citet{Yuan2011}. It was obtained by fairly involved tools in calculus and tedious algebraic derivations. As will be shown in Appendix \ref{pf:rank}, we obtain this expression via a simple application of Corollary~\ref{cor2}. More importantly, Corollary \ref{cor3} establishes that $\nabla \cdot C_K(Y)$ is unbiased for $df(C_K(Y))$ -- that is, formula \eqref{alter} holds for the matrix estimator $C_K(Y)$. This verification step was not 
presented in~\citet{Yuan2011}; wherein, the validity of~\eqref{alter} was assumed. As we explain in the next section, verifying the regularity conditions is rather nontrivial.

\subsubsection{Verifying the regularity conditions}
\label{verifying:regularity:con}

We have showed in Section \ref{sec:srestimate} that Stein's lemma \eqref{alter} is inapplicable to the discontinuous rank regularized estimator $S_{\theta}(Y)$. In light of such a result, it is important to investigate if the regularity conditions sufficient for the identity $\emph{df}(C_K(Y))=\mathbb{E}[\nabla \cdot C_K(Y)]$ in \eqref{alter} to hold true, are satisfied for the reduced rank estimator $C_K(Y)$. In fact, checking the weak differentiability of $C_K(Y)$ is not straightforward. We provide some evidence below.


Firstly, $C_K(Y)$ might not be continuous at $Y$ when $\sigma_K(Y)=\sigma_{K+1}(Y)$. This can be seen by a simple example. Suppose $m=n=3, K=2$ and $\{\B e_1, \B e_2,\B e_3\}$ is a set of orthonormal bases in $\mathbb{R}^3$. For $Y=2\B e_1 \B e'_1+\B e_2 \B e'_2+\B e_3 \B e'_3$, consider a sequence 
$$Y_\ell=2\B e_1 \B e'_1+(1+1/\ell)\B e_2 \B e'_2+(1-1/\ell)\B e_3 \B e'_3 \rightarrow Y,  \mbox{~~as~~}\ell \rightarrow \infty.$$
It is direct to verify that as $\ell \rightarrow \infty$,
$$C_K(Y_\ell)=2\B e_1 \B e'_1+(1+1/\ell)\B e_2 \B e'_2 \rightarrow 2\B e_1 \B e'_1+\B e_2 \B e'_2.$$
Now for another sequence
 $$\tilde{Y}_\ell=2\B e_1 \B e'_1+(1-1/\ell)\B e_2 \B e'_2+(1+1/\ell)\B e_3 \B e'_3 \rightarrow Y,  \mbox{~~as~~}\ell \rightarrow \infty,$$
it is clear that as $\ell \rightarrow \infty$,
\[
C_K(\tilde{Y}_\ell)=2\B e_1 \B e'_1+(1+1/\ell)\B e_3 \B e'_3 \rightarrow 2\B e_1 \B e'_1+\B e_3 \B e'_3.
\]

Moreover, $C_K(Y)$ might not be Lipschitz continuous over the open ball outside the set  $\{Y: \sigma_K(Y)=\sigma_{K+1}(Y)\}$. To illustrate this, we take a simple example as follows. Let $m=n=2K$ for a positive integer $K$, 
and set
$$Y_1=U\diag(\B \sigma)V', Y_2=U\diag( \tilde{\B \sigma})V',$$
$$\sigma_i=a, \tilde{\sigma}_i=b, \sigma_j=b, \tilde{\sigma}_j=a, 1\leq i \leq K, K+1 \leq j \leq n,$$
 where $U, V\in \mathbb{R}^{n\times n}$ are orthogonal matrices and $a, b$ are two constants satisfying $0<b<a$. We can then compute that $\|Y_1-Y_2\|^2_F=2K(a-b)^2$, and $\|C_K(Y_1)-C_K(Y_2)\|^2_F=2Ka^2$. Hence, by choosing $b=a-1$, we can conclude 
 \[
 \sup_a \frac{\|C_K(Y_1)-C_K(Y_2)\|_F}{\|Y_1-Y_2\|_F}=\infty.
 \] 

\subsubsection{Estimating df via smoothing with convolution operators} 

The discussions in Section \ref{verifying:regularity:con}, suggest the difficulty of legitimately invoking Stein's Lemma to obtain an expression for \df. We thus pursue a different approach, which to our knowledge, is novel. To this end, we first compute the \df for a smoothed version of $C_K(Y)$, obtained by the following convolution operation:
\begin{eqnarray*}
g_h(Y)=\mathbb{E}_Z[C_K(Y+hZ)],
\end{eqnarray*}
where the elements of $Z \in \mathbb{R}^{m \times n}$ are i.i.d from $N(0,1)$, independent of $Y$; the expectation $\mathbb{E}_Z(\cdot)$ is taken with respect to $Z$; and $h>0$ is a constant.  Because $g_h(Y)$ satisfies the regularity conditions, we can derive $\emph{df}(g_h(Y))$ by computing the divergence of $g_h(Y)$. Since it can be shown that $\emph{df}(g_h(Y)) \rightarrow \emph{df}(C_K(Y))$, as $h \rightarrow 0+$, we are able to obtain $\emph{df}(C_K(Y))$ by letting $h\rightarrow 0+$. However, the detailed analysis is quite involved, we thus postpone the complete proof to Appendix \ref{pf:rank}.

As we were preparing the paper, we became aware of the recent work \cite{hansen2018stein} that also provides a rigorous derivation of the \df for reduced rank estimators. However, the proof technique in \cite{hansen2018stein} is significantly different from ours. The author in \cite{hansen2018stein} verifies directly the weak differentiability of the estimator and proceeds with divergence calculation, while our approach is rather indirect and constructive as explained in the preceding paragraph. Moreover, the approximation strategy via convolution with Gaussian kernel discussed above can in fact work beyond matrix estimation settings. For example, under the linear regression model, the best subset selection in constrained form is:
\begin{align*}
\hat{\B \beta} \in \argmin_{\B \beta \in \mathbb{R}^p} \|\B y - X\B \beta\|_2^2 \mbox{~~subject to~}\|\B \beta\|_0\leq k.
\end{align*}
Under the orthogonal design, $\hat{\B \beta}_i=\B x_i'\B y \cdot \mathbbm{1}(|\B x_i'\B y| \geq |\B x' \B y|_{(k)})$, where $\B x_i$ is the $i$th column of $X$ and $|\B x'\B y|_{(k)}$ is the $k$th largest value among $\{|\B x_i'\B y|\}_i$. The $\emph{df}$ of $\hat{\B \beta}$ in this case with null underlying signal has been derived in \cite{ye1998measuring} by making use of the projection property of least square estimates. Alternatively, we can follow the approximation arguments and study the sequence 
\[
\hat{\B \beta}_i^h=\mathbb{E}_{\B z}[(\B x_i'\B y+z_i) \cdot \mathbbm{1}(|\B x_i'\B y+z_i| \geq |\B x' \B y+\B z|_{(k)})], \quad \B z\sim N(\B{0}, h\cdot \M{I}_p)
\]
to obtain the $\emph{df}$ formula in an automatic way. Since the calculation is standard, we skip it here. 

\section{Degrees of freedom in (low rank) multivariate linear regression} \label{sec4}
Low rank matrix estimation problems also arise in the multivariate linear regression setting, where one is interested in modeling several response measurements simultaneously. In particular, the multivariate linear regression model is given by:
\begin{eqnarray}
Y=XM^{\ast}+\mathcal{E},   \label{multimodel}
\end{eqnarray}
where, $Y=(\B y_1,\ldots, \B y_m)' \in \mathbb{R}^{m \times n}$ is the response matrix, $X=(\B x_1, \ldots, \B x_m)' \in \mathbb{R}^{ m \times p}$ is the design matrix, $M^{\ast} \in \mathbb{R}^{p \times n}$ is the underlying coefficient matrix, and $\mathcal{E}=(\epsilon_{ij})_{m \times n}$ with $\epsilon_{ij} \overset{iid}{\sim}N(0, \tau^2)$ is the random noise matrix. 

\subsection{Reduced rank regression estimators}

In many applications, it is reasonable to assume that the dependency of $Y$ on $X$ is only through $K<\min(p,n)$ linear combinations, namely, $M^{\ast}$ is of low rank. In such cases, we can consider the following reduced rank regression estimator \citep{anderson1951estimating,velu2013multivariate},
\begin{eqnarray}
M_K(Y) \in \argmin_{\text{rank}(M)\leq K }\|Y-XM\|^2_F.  \label{multi}
\end{eqnarray}
Let the compact singular value decomposition of $X$ be $X_{m \times p}=U_{m \times r}\Sigma_{r\times r} V_{p\times r}'$, with $r$ being the rank of $X$. Then the least squares fit is given by 
\begin{align}
\label{multiols:fit}
\hat{Y}=X(X'X)^+X'Y=UU'Y,
\end{align}
 where $(X'X)^+$ is the Moore-Penrose pseudo inverse of $X'X$. By applying Eckart-Young Theorem, an explicit solution of \eqref{multi} is given as follows \citep{Yuan2011, mukherjee2015degrees}:
\begin{eqnarray*}
 M_K(Y)= (X'X)^{-1}X'C_K(\hat{Y}) & \text{if~} r=p<m.
 \end{eqnarray*}
$M_K(Y)$ might not be unique when $p>m$, but the fitted value $XM_K(Y)$ is unique with $XM_K(Y)=C_K(UU'Y)$. The reduced rank problem \eqref{rank} can be thought of as a special case of \eqref{multi} where $X$ equals the identity matrix $ I \in \mathbb{R}^{m\times m}$. We will use the \emph{df} result for the reduced rank estimator in Corollary \ref{cor3} to derive the \emph{df} formula for the estimator defined in \eqref{multi}. It is important to note that, in the current regression setting, the interest under SURE framework lies on the prediction error $\mathbb{E}(\|XM^*-XM_K(Y)\|^2_F)$ rather than the estimation error $\mathbb{E}(\|M^*-M_K(Y)\|_F^2)$. Therefore, the degrees of freedom for $M_K(Y)$ is defined as 
 \[
 df(M_K(Y))=\sum_{ij}\text{Cov}((XM_K(Y))_{ij}, Y_{ij}) / \tau^2,
 \]
 where $(XM_K(Y))_{ij}$ is the $(i,j)$th entry of the matrix $XM_K(Y)$.

\begin{corollary} \label{cor4}
Consider the reduced rank regression estimator $M_K(Y)$ in \eqref{multi} under the model \eqref{multimodel}. We have the following df formula for $M_K(Y)$:
\begin{align} \label{multidf}
&df(M_K(Y))  \nonumber  \\
=&
\begin{cases}
\mathbb{E}\Big [(r+n-K)K+2\sum_{i=1}^K\sum_{j=K+1}^{\min(r,n)}\frac{\sigma_j^2}{\sigma_i^2-\sigma_j^2} \Big ], & \text{if } K < \min(r, n) \\
rn, & \text{if } K \geq \min(r,n)
\end{cases}
\end{align}
where $\sigma_1 \geq \ldots \geq \sigma_{\min(r,n)}\geq 0$  are the singular values of the least square fitted value $\hat{Y}$ in \eqref{multiols:fit}.
\end{corollary}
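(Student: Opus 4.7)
The plan is to reduce the problem to the additive Gaussian setting of Corollary \ref{cor3} via an orthogonal rotation induced by the design matrix. Using the compact SVD $X = U\Sigma V'$, equation \eqref{multiols:fit} gives $\hat Y = UU'Y$, and the closed-form expression stated right after \eqref{multi} yields $XM_K(Y) = C_K(\hat Y) = C_K(UU'Y)$. Setting $\tilde Y := U'Y \in \mathbb{R}^{r \times n}$, we have $UU'Y = U\tilde Y$; since $U$ has orthonormal columns, multiplying on the left by $U$ leaves singular values unchanged and pre-multiplies left singular vectors, so $C_K(UU'Y) = U\, C_K(\tilde Y)$. This identifies the fit $XM_K(Y)$ with a rank-$K$ truncation on the lower-dimensional matrix $\tilde Y$.

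Next I would verify that $\tilde Y$ itself satisfies the additive Gaussian model \eqref{model} on which Corollary \ref{cor3} is proved. Writing $\tilde Y = U'XM^{\ast} + U'\mathcal E$, the noise $U'\mathcal E$ is an $r\times n$ matrix whose entries are independent $N(0,\tau^2)$: the columns of $\mathcal E$ are independent vector Gaussians and $U'U = I_r$. Now the degrees of freedom of $M_K(Y)$ reduce to those of $C_K(\tilde Y)$ by a direct rearrangement:
\begin{align*}
\tau^2 \cdot df(M_K(Y))
&= \sum_{i,j}\text{Cov}\bigl((UC_K(\tilde Y))_{ij},\, Y_{ij}\bigr)
= \sum_{k,j}\text{Cov}\Bigl((C_K(\tilde Y))_{kj},\, \sum_i U_{ik} Y_{ij}\Bigr) \\
&= \sum_{k,j}\text{Cov}\bigl((C_K(\tilde Y))_{kj},\, \tilde Y_{kj}\bigr)
= \tau^2 \cdot df(C_K(\tilde Y)).
\end{align*}

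It remains to substitute Corollary \ref{cor3} applied to $\tilde Y \in \mathbb{R}^{r\times n}$. If $r \geq n$ the dimensions match directly with $(m,n) \mapsto (r,n)$, producing the leading term $(r+n-K)K$ and the cross sum over $j = K+1,\dots, n = \min(r,n)$. If $r < n$, the same formula is applied to $\tilde Y' \in \mathbb{R}^{n\times r}$, which has the identical nonzero singular values and identical divergence; summing then runs up to $\min(r,n) = r$. Both cases are unified into the first branch of \eqref{multidf}. For the boundary case $K \geq \min(r,n)$, note that $\text{rank}(\tilde Y) \leq \min(r,n) \leq K$, so $C_K(\tilde Y) = \tilde Y$ and $XM_K(Y) = UU'Y$ is a linear smoother; its degrees of freedom equal $\text{tr}(UU') \cdot n = rn$, matching the second branch.

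The main conceptual obstacle is not analytical but structural: one must (i) handle the non-uniqueness of $M_K(Y)$ when $p > m$ by arguing throughout with the unique fit $XM_K(Y)$, and (ii) recognize that the rotation $\tilde Y = U'Y$ is precisely the device that turns a regression problem with an arbitrary rank-$r$ design into an instance of the additive Gaussian matrix denoising problem to which Corollary \ref{cor3} applies. The subtle regularity concerns that were delicate in Corollary \ref{cor3} (discontinuity of $C_K$ on the set $\{\sigma_K = \sigma_{K+1}\}$, lack of Lipschitz continuity, handled there by Gaussian convolution smoothing) do not have to be revisited here, since they are absorbed into the invocation of that corollary.
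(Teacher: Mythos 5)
Your proposal is correct and follows essentially the same route as the paper: it introduces the rotated matrix $U'Y$, uses $XM_K(Y)=C_K(UU'Y)=U\,C_K(U'Y)$ together with the orthonormality of $U$ to show $df(M_K(Y))=df(C_K(U'Y))$, and then invokes Corollary \ref{cor3}. Your additional remarks on the $r<n$ case (via transposition) and the $K\geq\min(r,n)$ boundary case are sound refinements of the same argument.
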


We are aware that the analytic expression inside the expectation in \eqref{multidf} has been shown to be equal to $\nabla \cdot (XM_K(Y))$ in \citet{Yuan2011, mukherjee2015degrees}. Both papers use the chain rule to relate the divergence of $XM_K(Y)$ to the divergence of a related reduced rank estimator. 
Our approach differs as we compute the \df from basic principles and then appeal to Corollary \ref{cor3}. 
We emphasize that the unbiasedness of divergence for the \df, i.e., Equation \eqref{alter}, does not necessarily hold --- the regularity conditions sufficient for the identity to hold, need to be rigorously verified. As has been demonstrated in Section \ref{verifying:regularity:con}, the weak differentiability of the reduced rank estimator may not be easily confirmed. Based on the result from Corollary \ref{cor3}, we are able to provide a complete justification for the expression derived in~\eqref{multidf} bypassing such a difficulty. 


\subsection{Spectral regularized regression estimators}

In addition to the constrained estimator in \eqref{multi}, we may also consider the penalized problem
\begin{equation}
 \argmin_{M\in \mathbb{R}^{p\times n}} ~~~ \frac{1}{2}\|Y-XM\|^2_F + \sum_{i=1}^{\min(p,n)}P_{\theta}(\sigma_i) \label{rmestimate}.
\end{equation}
However, unlike the spectral regularized problem \eqref{estimator}, except for few penalty functions like $P_{\theta}(\sigma)=\theta \mathbbm{1}(\sigma\neq 0)$ \citep{bunea2011optimal}, there is no closed form solution for \eqref{rmestimate}. Simple expressions for the degrees of freedom for such fitting procedures seem to be unknown. We note however,
that some nice work is available on the \emph{df} of regularized estimators in the multiple linear regression---see for e.g.~\citet{zou2007degrees, tibshirani2012degrees}.

We follow the approach of~\citet{mukherjee2015degrees}.
 Motivated by the solution form of \eqref{multi}, we explicitly construct an estimator for $M^*$ given by
\begin{eqnarray}
RM_{\theta}(Y)=(X'X)^{-1}X'S_{\theta}(\hat{Y}),~~~~~~~~ X\cdot RM_{\theta}(Y)=S_{\theta}(UU'Y),  \label{rmreal}
\end{eqnarray}
where $\hat{Y}$ is the least square fitted value, $U$ is the left singular vector matrix of $X$, and $S_{\theta}(\cdot)$ is defined in \eqref{estimator}. 
The following two corollaries provide an expression of the \emph{df} for a variety of such estimators.

\begin{corollary}
\label{final:last0}
For the penalized multivariate regression estimator $RM_{\theta}(Y)$ in \eqref{rmreal} under the model \eqref{multimodel}, if the same conditions for $P_{\theta}(\cdot)$ as in Corollary \ref{cor2} hold, then
\begin{eqnarray*} \label{last}
df(RM_{\theta}(Y))=\mathbb{E} \Bigg [ \sum_{i=1}^{\min(r,n)}  \Big ( s'_{\theta}(\sigma_i)+|r-n|\frac{s_{\theta}(\sigma_i)}{\sigma_i} \Big)+2\sum_{\substack {i \neq j\\ i, j=1}}^{\min(r,n)}\frac{\sigma_is_{\theta}(\sigma_i)}{\sigma^2_i-\sigma^2_j} \Bigg ],
\end{eqnarray*}
where $\sigma_1 \geq \ldots \geq \sigma_{\min(r,n)}\geq 0$ are the singular values of the least square fitted value $\hat{Y}$ in \eqref{multiols:fit}.
\end{corollary}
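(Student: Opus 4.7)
The plan is to reduce the problem to a direct application of Corollary \ref{cor2} on a Gaussian data matrix. From \eqref{rmreal} we have $X\cdot RM_{\theta}(Y) = S_{\theta}(UU'Y)$, so the task is to compute
$$df(RM_{\theta}(Y)) \;=\; \frac{1}{\tau^{2}}\sum_{i,j}\mathrm{Cov}\bigl([S_{\theta}(UU'Y)]_{ij},\,Y_{ij}\bigr).$$
First I would establish the equivariance identity $S_{\theta}(UW) = U\,S_{\theta}(W)$ for $W := U'Y \in \mathbb{R}^{r\times n}$. Given any reduced SVD $W = \tilde P\,\diag(\tilde\sigma)\,\tilde Q'$, the matrix $U\tilde P$ has orthonormal columns because $U$ does, so $UW = (U\tilde P)\,\diag(\tilde\sigma)\,\tilde Q'$ exposes the non-zero singular structure of $UW$; since $s_{\theta}(0) = 0$, appending zero singular values is harmless, and this yields $S_{\theta}(UU'Y) = U\,S_{\theta}(W)$. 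In particular the singular values of $W$ coincide with the non-zero singular values of $\hat Y = UW$.

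Next I would complete $U$ to an orthogonal $\bar U = [U,\,U_{\perp}] \in O(m)$ and set $Z := U_{\perp}'Y \in \mathbb{R}^{(m-r)\times n}$, so that $Y = UW + U_{\perp}Z$. Rotational invariance of the Gaussian makes $W$ and $Z$ independent with i.i.d.\ $N(0,\tau^{2})$ noise entries. Expanding
$$\mathrm{Cov}\bigl([U S_{\theta}(W)]_{ij},\,Y_{ij}\bigr) = \sum_{a,c} U_{ia}U_{ic}\,\mathrm{Cov}\bigl([S_{\theta}(W)]_{aj},\,W_{cj}\bigr) + \sum_{a,c} U_{ia}(U_{\perp})_{ic}\,\mathrm{Cov}\bigl([S_{\theta}(W)]_{aj},\,Z_{cj}\bigr),$$
the second sum vanishes because $S_{\theta}(W)$ is a function of $W$ alone and $W \perp Z$. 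Summing the first sum over $i$ via $\sum_{i} U_{ia}U_{ic} = \delta_{ac}$ and then over $j$ collapses it to $\sum_{a,j}\mathrm{Cov}([S_{\theta}(W)]_{aj},\,W_{aj})$, which gives the clean reduction $df(RM_{\theta}(Y)) = df(S_{\theta}(W))$.

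Finally I would invoke Corollary \ref{cor2} on $W$, which is an $r\times n$ Gaussian data matrix inheriting all of the required hypotheses on $P_{\theta}$. If $r \geq n$, Corollary \ref{cor2} applies directly with $(m,n)$ replaced by $(r,n)$, producing the factor $r-n$ and a sum over $n$ singular values. If $r < n$, I would apply it instead to $W' \in \mathbb{R}^{n\times r}$: the identity $S_{\theta}(W') = S_{\theta}(W)'$ gives $df(S_{\theta}(W')) = df(S_{\theta}(W))$, and the corollary then delivers the factor $n-r$ and a sum over $r$ indices. Both cases unify into the prefactor $|r-n|$ and sums over $\min(r,n)$ indices, with the $\sigma_{i}$'s being the singular values of $W$, equivalently the non-zero singular values of $\hat Y$ as stated. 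The main technical care lies in making the equivariance step hold pointwise when $W$ has repeated or zero singular values, and in the orientation bookkeeping when $r < n$; all Stein-type regularity is absorbed into the single invocation of Corollary \ref{cor2}, since the covariance reduction $df(RM_{\theta}(Y)) = df(S_{\theta}(W))$ never requires differentiating $S_{\theta}(UU'Y)$ with respect to $Y$.
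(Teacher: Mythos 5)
Your proposal is correct and takes essentially the same route as the paper: reduce to the ancillary matrix $W=U'Y$, which follows an additive Gaussian model with i.i.d.\ $N(0,\tau^2)$ noise, show $df(RM_{\theta}(Y))=df(S_{\theta}(W))$ using the orthonormality of the columns of $U$ together with $X\,RM_{\theta}(Y)=U S_{\theta}(U'Y)$, and then invoke Corollary \ref{cor2} (transposing when $r<n$). The only cosmetic difference is that the paper carries out the covariance reduction by a direct index-relabeling (trace) identity, as in the proof of Corollary \ref{cor4}, rather than your decomposition $Y=UW+U_{\perp}Z$ with independence of $W$ and $Z$.
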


\begin{table}[htb!]
	\small
	\begin{center}
		\begin{tabular}{|c|c|c|}
			\hline
			 Penalty name & Penalty function & Applicability of Stein's lemma    \\
			\hline
		         Lasso \cite{tibshirani1996regression} & \parbox{5.8cm}{ \vspace{-0.1cm} \center $P_{\theta}(\sigma)=\theta \sigma$ \vspace{0.2cm}} & Yes \\
			\hline
			SCAD \cite{fan2001variable} & \parbox{5.8cm}{
			 \vspace{0.2cm}
			 $
			 P_{\theta}(\sigma)=
			 \begin{cases}
			 \theta \sigma & 0\leq \sigma \leq \theta \\
			 \frac{-\sigma^2+2a\theta\sigma-\theta^2}{2(a-1)} &  \theta < \sigma \leq a \theta \\
			 \frac{(a+1)\theta^2}{2} & \sigma >a \theta
			 \end{cases}
			  \vspace{0.2cm}
			 $
			 }
			  & Yes, when $a>2$
			  \\
			\hline
			MC+ \cite{zhang2010nearly} & \parbox{5.8cm}{
			 \vspace{0.2cm}
			$
			P_{\theta}(\sigma)=
			\begin{cases}
			\theta(\sigma-\frac{\sigma^2}{2\theta\gamma}) & 0\leq \sigma \leq \gamma \theta \\
			\frac{\gamma \theta^2}{2} & \sigma >\gamma \theta
			\end{cases}
			 \vspace{0.2cm}
			$ 
			}
			& Yes, when $\gamma>1$ \\
			\hline
			Bridge \cite{frank1993statistical}  &  \parbox{5.8cm}{ \vspace{-0.1cm} \center $P_{\theta}(\sigma)=\theta \sigma^q, \quad q\in [0,1)$ \vspace{0.2cm}}  & No \\
			\hline 
			Log \cite{mazumder2011sparsenet} &  \parbox{5.8cm}{ \vspace{0.2cm} \center $P_{\theta}(\sigma)=\frac{\theta\log(\gamma \sigma+1)}{\log(1+\gamma)}$ \vspace{0.2cm}}  & Yes, when $\log(1+\gamma)>\theta \gamma^2$ \\
			\hline
			Firm \cite{gao1997waveshrink} & \parbox{5.8cm}{
			 \vspace{0.2cm}
			$
			P_{\theta}(\sigma)=
			\begin{cases}
			\theta(\sigma-\frac{\sigma^2}{2\gamma}) & 0\leq \sigma \leq \gamma  \\
			\frac{\gamma \theta }{2} & \sigma >\gamma 
			\end{cases}
			 \vspace{0.2cm}
			$ 
			}
			& Yes, when $\gamma>\theta$ \\
			\hline
		\end{tabular}
		\vspace{0.3cm}
		\caption{Examples of commonly used penalty functions. The applicability of Stein's lemma is in terms of the estimator $S_{\theta}(Y)$ in \eqref{estimator} under model \eqref{model} and the estimator $RM_{\theta}(Y)$ in \eqref{rmreal} under model \eqref{multimodel}.}
	\end{center}	
\end{table}

\begin{corollary}
\label{final:last}
For the penalized multivariate regression estimator $RM_{\theta}(Y)$ in \eqref{rmreal} with $P_{\theta}(\alpha)=\theta|\alpha|^q (0\leq q <1)$ under the model \eqref{multimodel}, then
\begin{align*}
&df(RM_{\theta}(Y))=\mathbb{E} \sum_{\substack {i \neq j\\ i, j=1}}^{\min(r,n)} \frac{\sigma_i\eta_q(\sigma_i;\theta)-\sigma_j\eta_q(\sigma_j;\theta)}{\sigma_i^2-\sigma^2_j}+  \\
&\mathbb{E} \sum_{i=1}^{\min(r,n)} \left [|r-n|\frac{\eta_q(\sigma_i;\theta)}{\sigma_i}+\eta'_q(\sigma_i;\theta)+[2(1-q)\theta]^{1/(2-q)} f_{\sigma_i}(c_q\theta^{1/(2-q)})\right ],
\end{align*}
where $f_{\sigma_i}(\cdot)$ is the marginal density function of $Y$'s $i$th singular value $\sigma_i$; $\eta'_q(\sigma_i;\theta)$ is the partial derivative of $\eta_q(\sigma_i;\theta)$ with respect to $\sigma_i$; $\eta(\cdot;\theta), c_q$ are defined in \eqref{etaq:def} and \eqref{cq:constant} respectively; $\sigma_1 \geq \ldots \geq \sigma_{\min(r,n)}\geq 0$ are the singular values of the least square fitted value $\hat{Y}$ in \eqref{multiols:fit}.
\end{corollary}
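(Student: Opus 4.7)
The plan is to reduce the multivariate regression setting to the additive Gaussian matrix model of Section~\ref{sec3}, in parallel to the way Corollary~\ref{cor4} was obtained from Corollary~\ref{cor3}, and then invoke the unified formula in Corollary~\ref{cor3done} (which also captures the $q=0$ boundary case via Corollary~\ref{cor3add}). Let $X=U\Sigma V'$ be the compact SVD of $X$ and complete $U$ to an orthogonal matrix $O=[U,\,U_\perp]$ with $U_\perp\in\mathbb{R}^{m\times(m-r)}$. Setting $\tilde{Y}:=U'Y\in\mathbb{R}^{r\times n}$ and $\tilde{Z}:=U_\perp'Y\in\mathbb{R}^{(m-r)\times n}$, model~\eqref{multimodel} gives $\tilde{Y}=\Sigma V'M^{\ast}+U'\mathcal{E}$ and $\tilde{Z}=U_\perp'\mathcal{E}$, where $U'\mathcal{E}$ and $U_\perp'\mathcal{E}$ are independent Gaussian matrices with i.i.d.\ $N(0,\tau^2)$ entries. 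I would next record the key equivariance $S_\theta(UB)=U\cdot S_\theta(B)$ for any $B\in\mathbb{R}^{r\times n}$: writing a reduced SVD $B=\tilde{A}\tilde{\Sigma}\tilde{V}'$, the product $UB=(U\tilde{A})\tilde{\Sigma}\tilde{V}'$ is again a reduced SVD (as $U\tilde{A}$ has orthonormal columns), so $S_\theta$ preserves the left factor. Applied with $B=\tilde{Y}$, this yields $X\cdot RM_\theta(Y)=S_\theta(UU'Y)=U\cdot S_\theta(\tilde{Y})$.

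The next step is to collapse the covariance sum that defines the degrees of freedom. Since $S_\theta(\tilde{Y})$ depends on $Y$ only through $\tilde{Y}$, which is independent of $\tilde{Z}$, and $Y_{ij}=\sum_{l} U_{il}\tilde{Y}_{lj}+\sum_{l}(U_\perp)_{il}\tilde{Z}_{lj}$, expanding gives
\begin{align*}
\sum_{i,j}\text{Cov}\bigl((X\cdot RM_\theta(Y))_{ij},\,Y_{ij}\bigr)
&=\sum_{i,j,k,l}U_{ik}U_{il}\,\text{Cov}\bigl((S_\theta(\tilde{Y}))_{kj},\,\tilde{Y}_{lj}\bigr)\\
&=\sum_{j,k}\text{Cov}\bigl((S_\theta(\tilde{Y}))_{kj},\,\tilde{Y}_{kj}\bigr),
\end{align*}
where the last equality uses $U'U=I_r$. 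Dividing by $\tau^2$ identifies $df(RM_\theta(Y))$ with the degrees of freedom of $S_\theta(\tilde{Y})$ viewed as an estimator in the additive Gaussian model~\eqref{model} on $\mathbb{R}^{r\times n}$ with signal $\Sigma V'M^{\ast}$ and noise variance $\tau^2$.

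Finally, I would apply Corollary~\ref{cor3done} (and Corollary~\ref{cor3add} for the $q=0$ endpoint) to $\tilde{Y}$. When $r\ge n$, the corollary applies directly with $(m,n)\mapsto(r,n)$, producing the dimensional term $(r-n)\eta_q(\sigma_i)/\sigma_i$ summed over $i=1,\ldots,n$. When $r<n$, I would pass to the transpose: since singular values are invariant under transposition and $S_\theta(\tilde{Y})'=S_\theta(\tilde{Y}')$ (obtained by transposing the SVD), the same formula applies with $(r,n)\mapsto(n,r)$, giving $(n-r)\eta_q(\sigma_i)/\sigma_i$ summed over $i=1,\ldots,r$. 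Both cases consolidate via $|r-n|$ and $\min(r,n)$ into the stated expression, with the Wishart-density term $[2(1-q)\theta]^{1/(2-q)}f_{\sigma_i}(c_q\theta^{1/(2-q)})$ inherited directly from Corollary~\ref{cor3done}. The main obstacle lies in the covariance-collapse step: cleanly exploiting the independence of $\tilde{Y}$ and $\tilde{Z}$ together with $U'U=I_r$ to reduce the sum on the full $m\times n$ grid to one on the $r\times n$ grid without leftover contributions from $\tilde{Z}$. Once this reduction is in place, the argument is a direct lift of the additive Gaussian results, so no new regularity issues (such as the jump discontinuity of $\eta_q$ for $q<1$) arise beyond those already handled in Corollaries~\ref{cor3add} and~\ref{cor3done}.
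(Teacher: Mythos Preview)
Your proposal is correct and follows essentially the same route as the paper: reduce to the additive Gaussian model on $\tilde{Y}=U'Y\in\mathbb{R}^{r\times n}$ via the identity $X\cdot RM_\theta(Y)=U\,S_\theta(\tilde{Y})$, collapse the covariance sum using $U'U=I_r$ to obtain $df(RM_\theta(Y))=df(S_\theta(\tilde{Y}))$, and then invoke Corollary~\ref{cor3done}. The paper's covariance collapse works directly with $\tilde{\epsilon}=U'\mathcal{E}$ rather than introducing $U_\perp$ and appealing to independence of $\tilde{Y}$ and $\tilde{Z}$, but the two computations are equivalent; your explicit transposition argument for the case $r<n$ spells out a detail the paper leaves implicit in the $|r-n|$, $\min(r,n)$ notation.
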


The result in the above two corollaries for $RM_{\theta}(Y)$ notably differs from that in~\cite{mukherjee2015degrees}. \citet{mukherjee2015degrees} calculates the divergence $\nabla \cdot (XRM_{\theta}(Y))$, while we provide a theoretical justification for the unbiasedness of the divergence for $\emph{df}(RM_{\theta}(Y))$ under a wide class of non-convex penalties $P_{\theta}(\cdot)$ in Corollary \ref{final:last0}, and further obtain the \df formula for a family of penalty functions to which Stein's lemma is not applicable in Corollary \ref{final:last}.

\section{Simulations}
\label{simulation:sec}
In this section, we perform simulation studies to lend further support to the \df formulas that we have derived in Sections \ref{sec3} and \ref{sec4}.

\subsection{Additive Gaussian model}
\label{agm:form}

We generate the data $Y$ according to the canonical additive Gaussian model \eqref{model}:
\[
Y=M^{\ast}+\mathcal{E}, 
\]
where $Y\in \mathbb{R}^{m\times n}, M^{\ast} \in \mathbb{R}^{m\times n}$, and $\mathcal{E}=(\epsilon_{ij})_{m\times n}$ with $\epsilon_{ij}\overset{\text{iid}}{\sim}N(0, \tau^2)$. We set $m=n=100, \tau=0.1, M^{\ast}=\sum_{k=1}^5 k \B u_k \B u'_k,$ where all entries of the $\B u_k$'s are independently sampled from $N(0,1/\sqrt{n})$. We consider the spectral regularized estimator $S_{\theta}(Y)$ in \eqref{estimator} with the following non-convex penalty functions:

\begin{itemize}
\item[(1)] The SCAD penalty \cite{fan2001variable}
\begin{align*}
\hspace{-0.3cm} P_{\theta}(\sigma)=\theta \sigma \mathbbm{1}(\sigma \leq \theta)+\frac{-\sigma^2+2a\theta \sigma-\theta^2}{2(a-1)}\mathbbm{1}(\theta< \sigma \leq a \theta)+\frac{(a+1)\theta^2}{2}\mathbbm{1}(\sigma> a\theta), 
\end{align*}
where $a>2$ is a fixed parameter. We choose $a=3.7$ as used in \citet{fan2001variable}.
\vspace{0.1cm}
\item[(2)] The MC+ penalty \cite{zhang2010nearly}
\begin{align*}
P_{\theta}(\sigma)=\theta\Big(\sigma-\frac{\sigma^2}{2\theta \gamma}\Big)\mathbbm{1}(0\leq \sigma \leq \gamma \theta)+\frac{\gamma \theta^2}{2}\mathbbm{1}(\sigma> \gamma \theta ),
\end{align*}
where $\gamma >0$ is a fixed constant. We set $\gamma=2$. 
\vspace{0.1cm}
\item[(3)] The log-penalty \cite{mazumder2011sparsenet}
\begin{align*}
P_{\theta}(\sigma)=\frac{\theta}{\log(1+\gamma)}\log(1+\gamma \sigma), \quad \gamma>0.
\end{align*}
We choose $\gamma=0.01$.
\vspace{0.1cm}
\item[(4)] The bridge-penalty \cite{frank1993statistical}
\begin{align*}
P_{\theta}(\sigma)=\theta \sigma^q, \quad  q\in[0,1).
\end{align*}
We consider $q=0.1, 0.5, 0.9$.
\end{itemize}

\begin{figure}[t!]
    \begin{center}
        \begin{tabular}{ccc}
            \hspace{-.5cm}  SCAD & \hspace{-.8cm} MC+& \hspace{-.8cm} Log   \vspace{-0.6cm} \\
              \hspace{-.5cm}
            \includegraphics[scale=0.43]{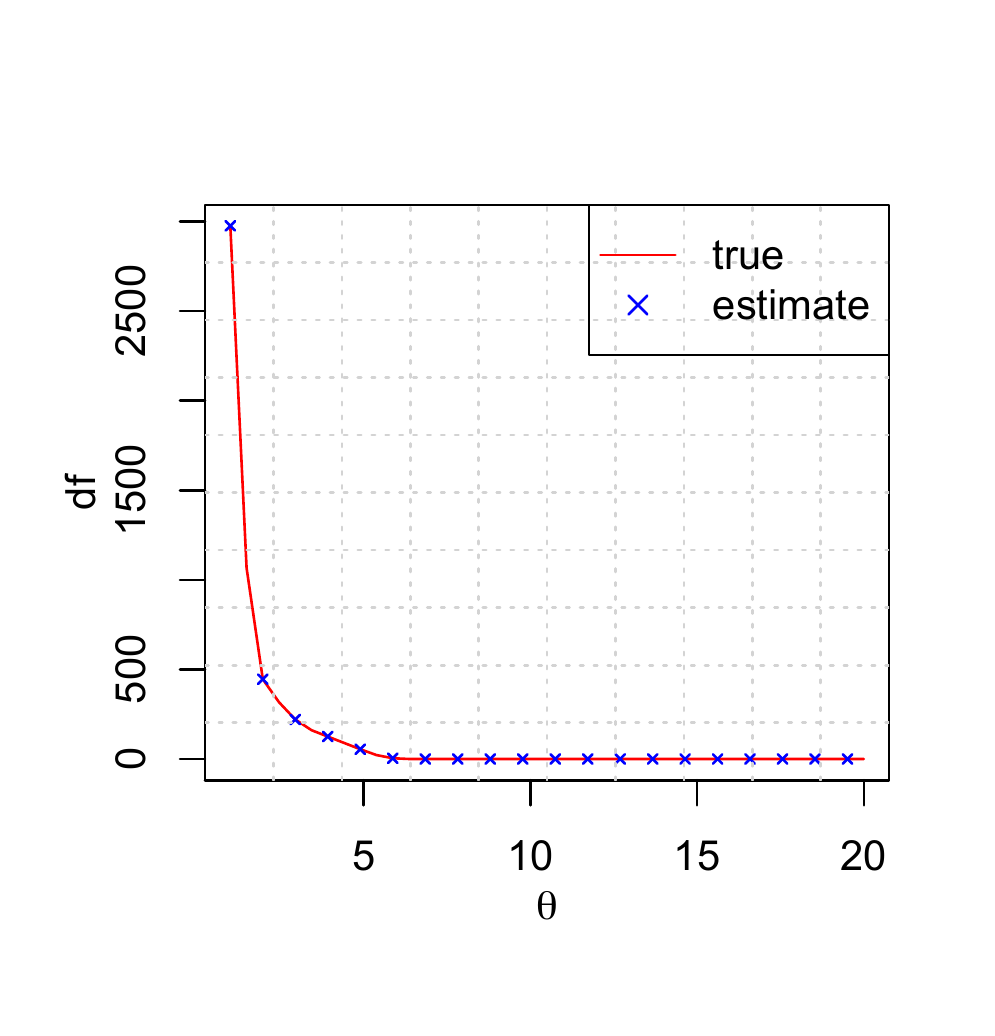} &
             \hspace{-.8cm}
            \includegraphics[scale=0.43]{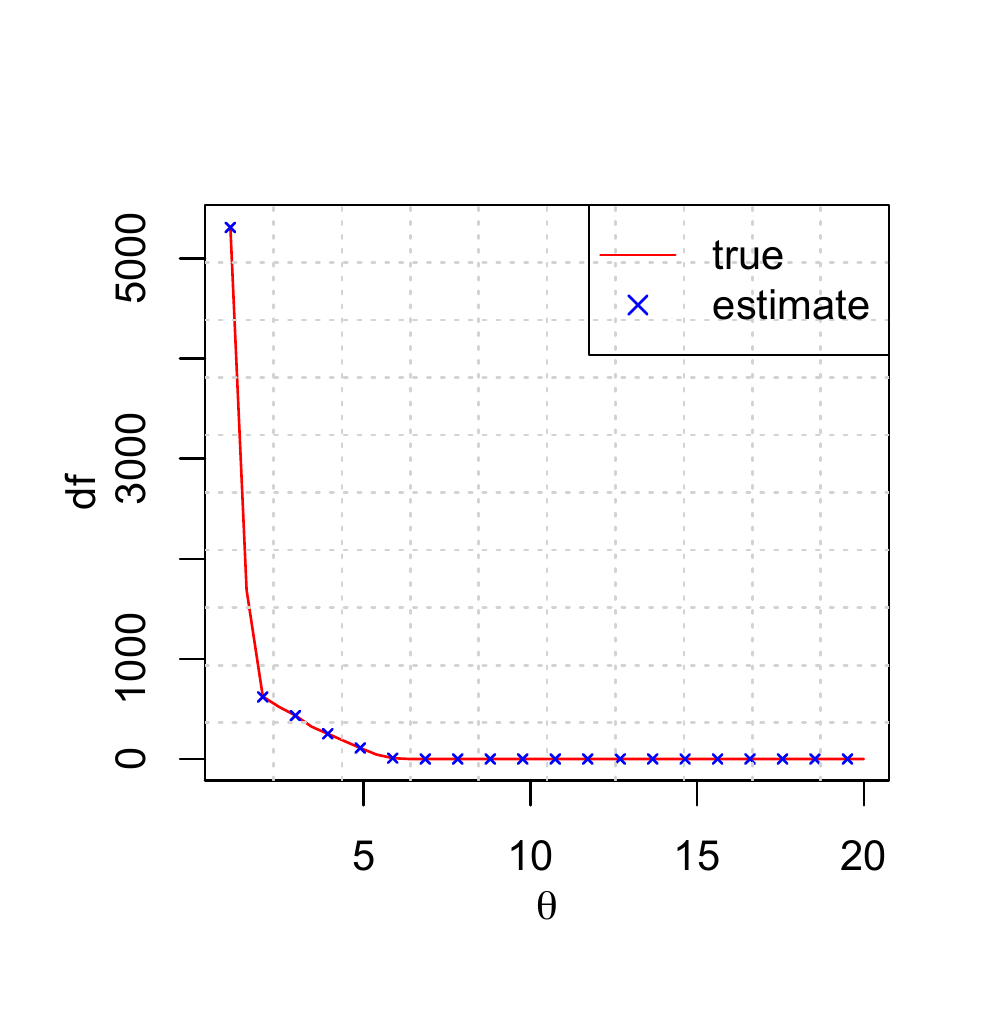} &
             \hspace{-.8cm}
            \includegraphics[scale=0.43]{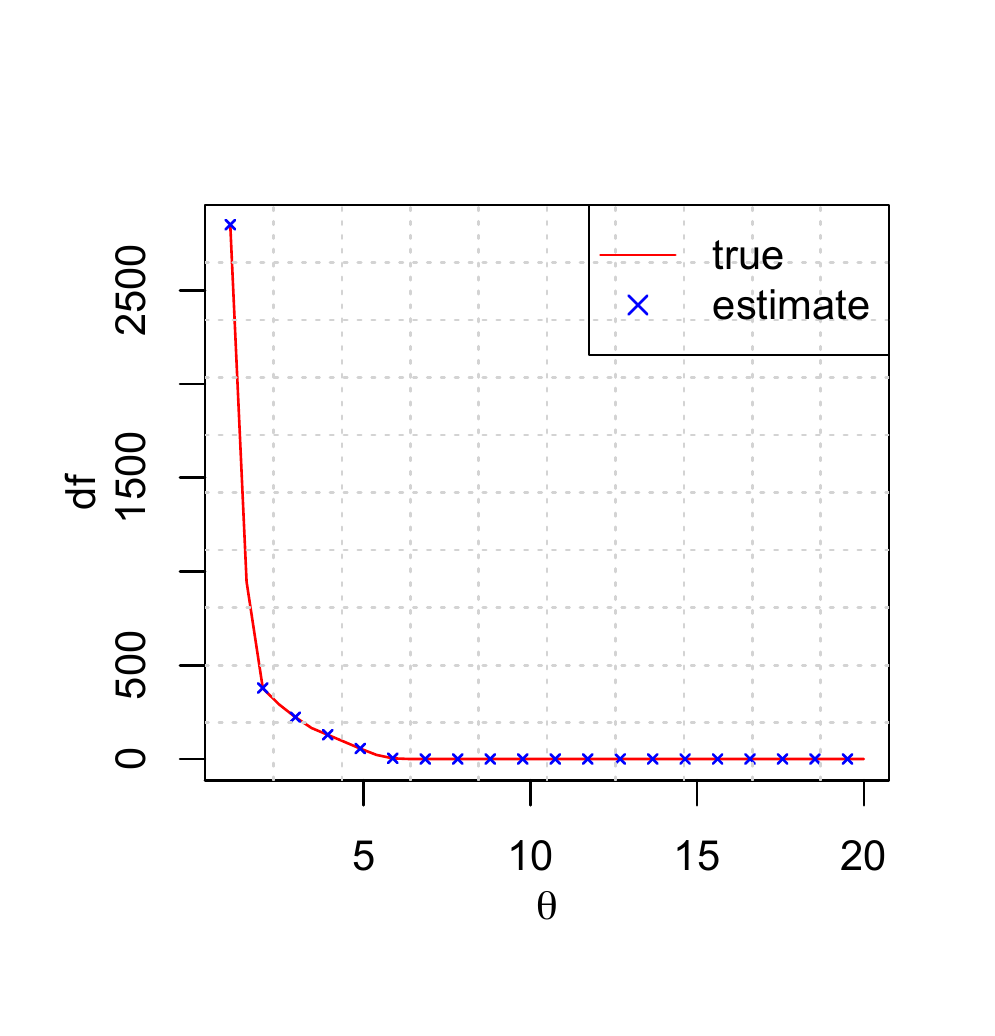}  \vspace{-0.2cm} \\
           \hspace{-.5cm} Bridge ($q=0.1$) &  \hspace{-.8cm} Bridge ($q=0.5$)&   \hspace{-.8cm} Bridge ($q=0.9$)  \vspace{-0.6cm} \\
             \hspace{-.5cm}
            \includegraphics[scale=0.43]{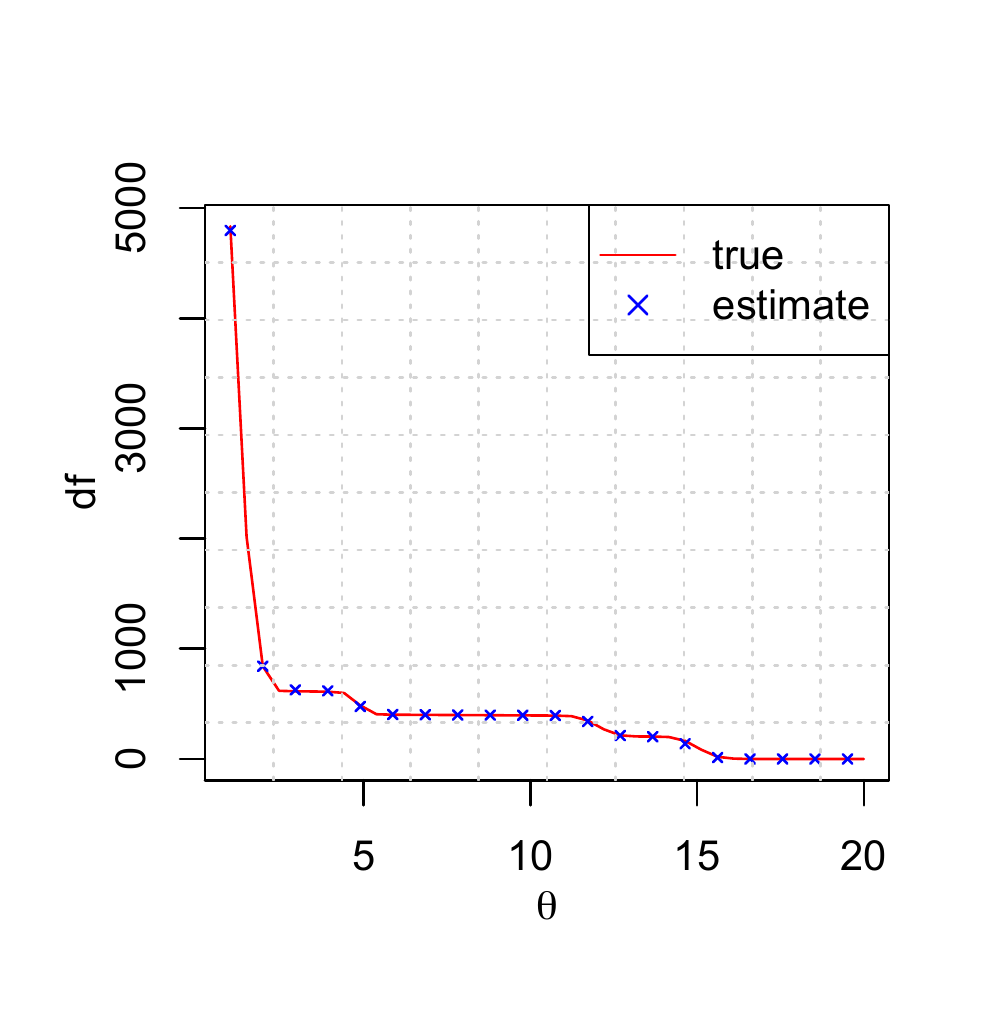} &
             \hspace{-.8cm}
            \includegraphics[scale=0.43]{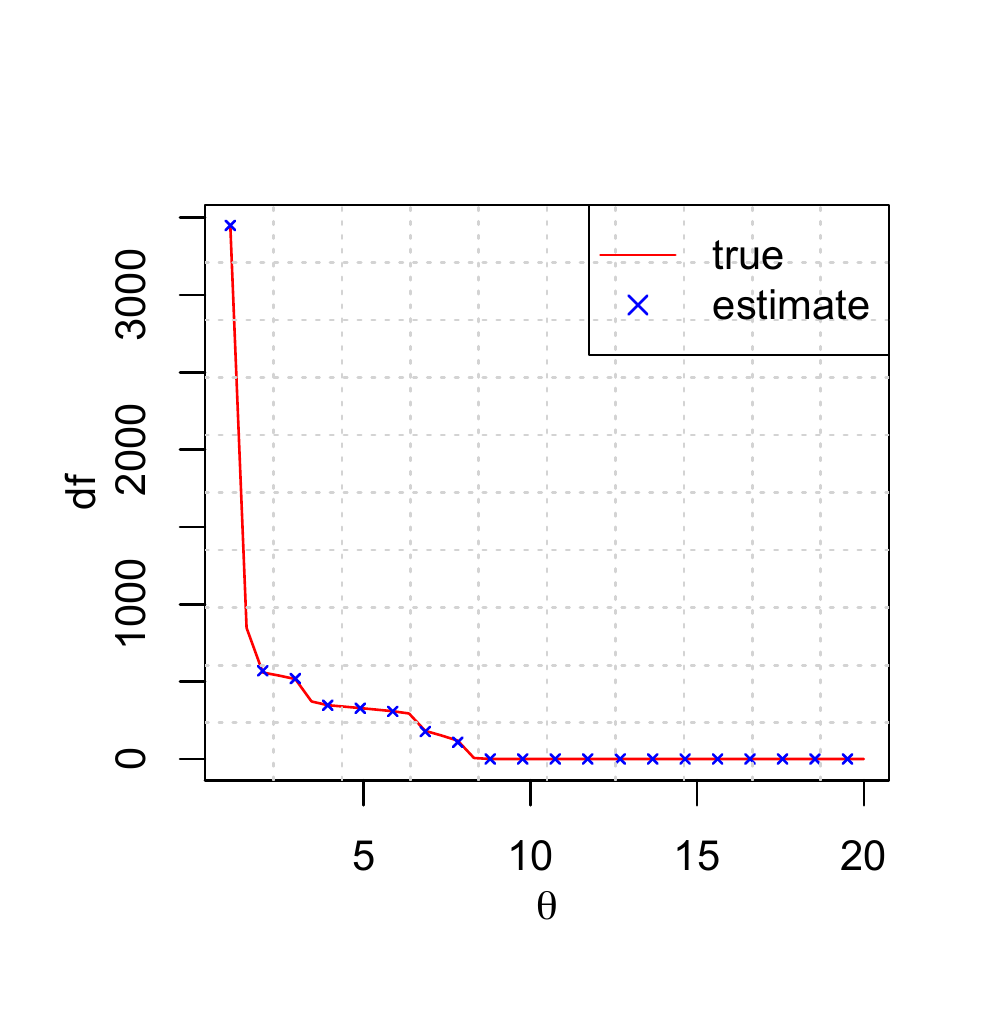} &
             \hspace{-.8cm}
            \includegraphics[scale=0.43]{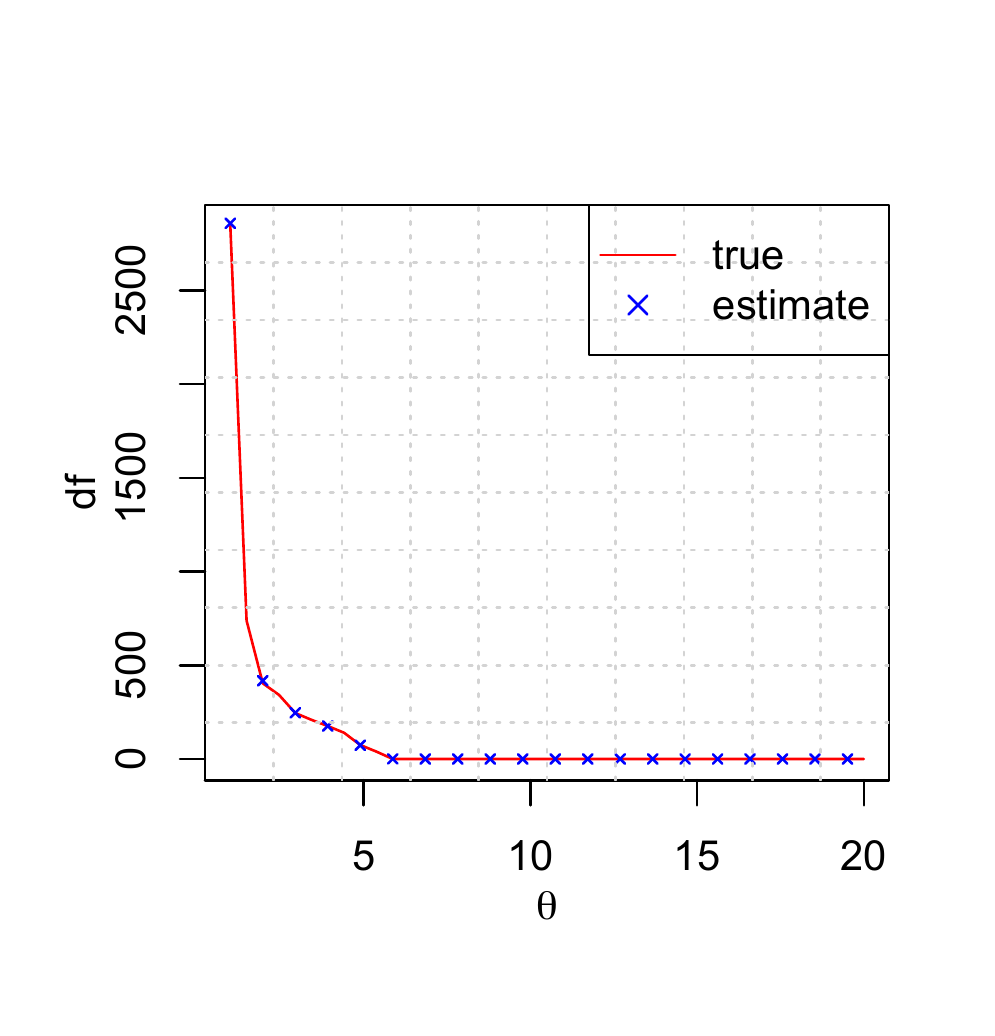}  \vspace{-0.5cm}
        \end{tabular}
        \caption{Degrees of freedom under the additive Gaussian model. The true df (red curve) is computed from \eqref{original}. The estimate (blue cross) is the average of the unbiased estimator over 100 repetitions. } \label{df:plot:case1}
    \end{center}
\end{figure}

\begin{figure}[tb!]
    \begin{center}
        \begin{tabular}{ccc}
            \hspace{-.5cm}  SCAD & \hspace{-.8cm} MC+& \hspace{-.8cm} Log   \vspace{-0.6cm} \\
              \hspace{-.5cm}
            \includegraphics[scale=0.43]{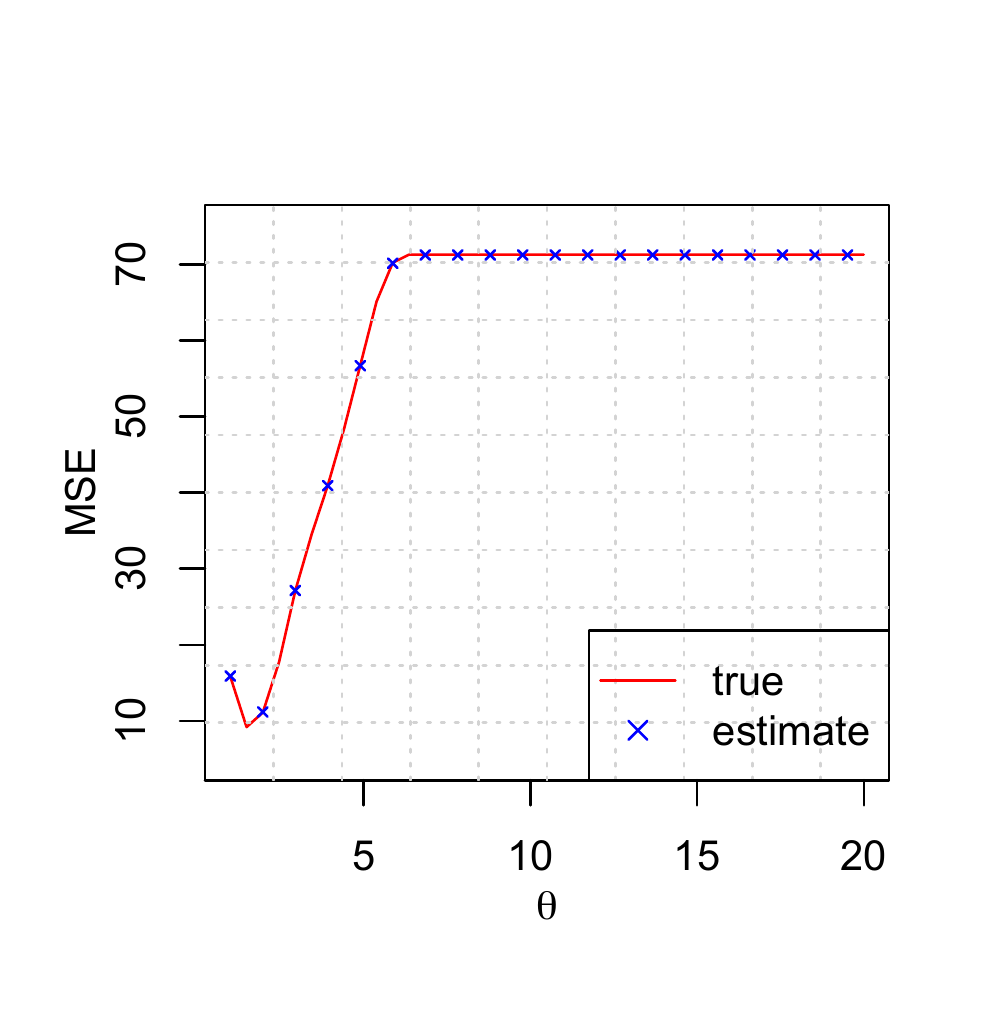} &
             \hspace{-.8cm}
            \includegraphics[scale=0.43]{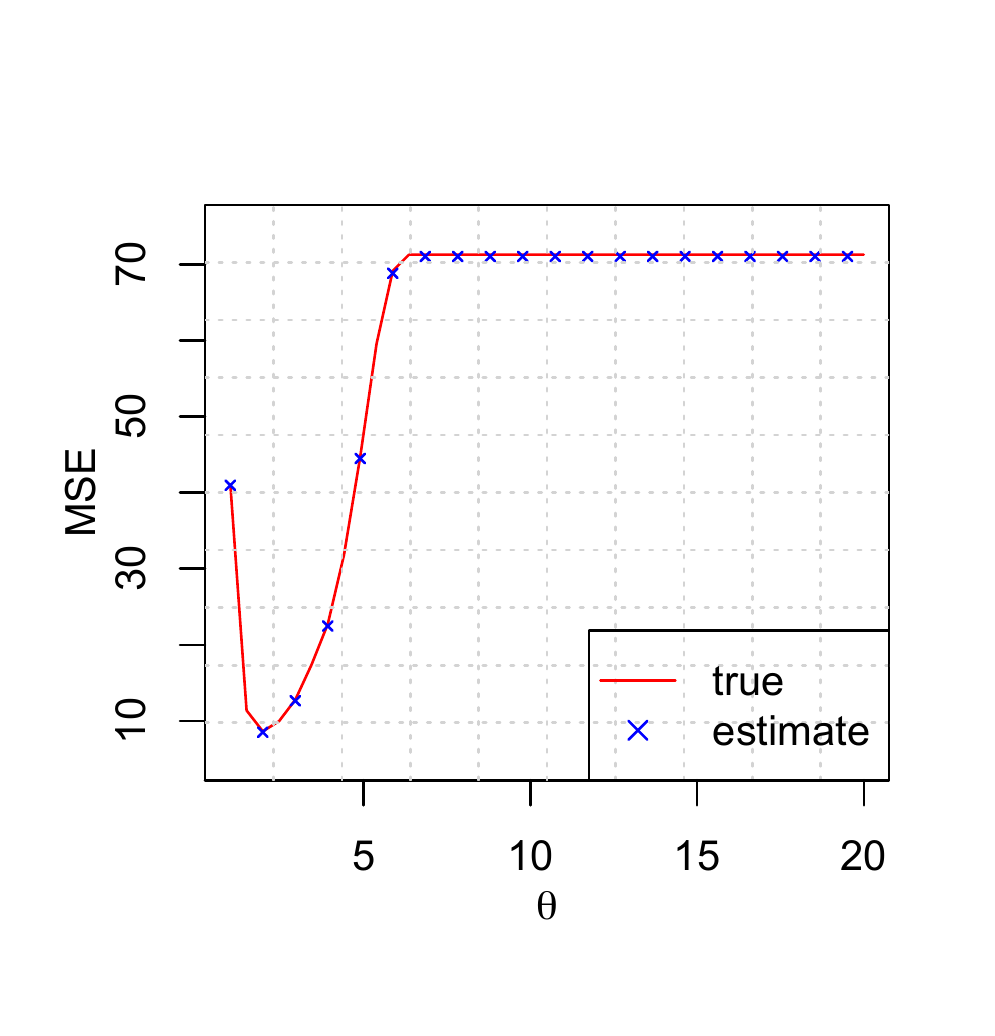} &
             \hspace{-.8cm}
            \includegraphics[scale=0.43]{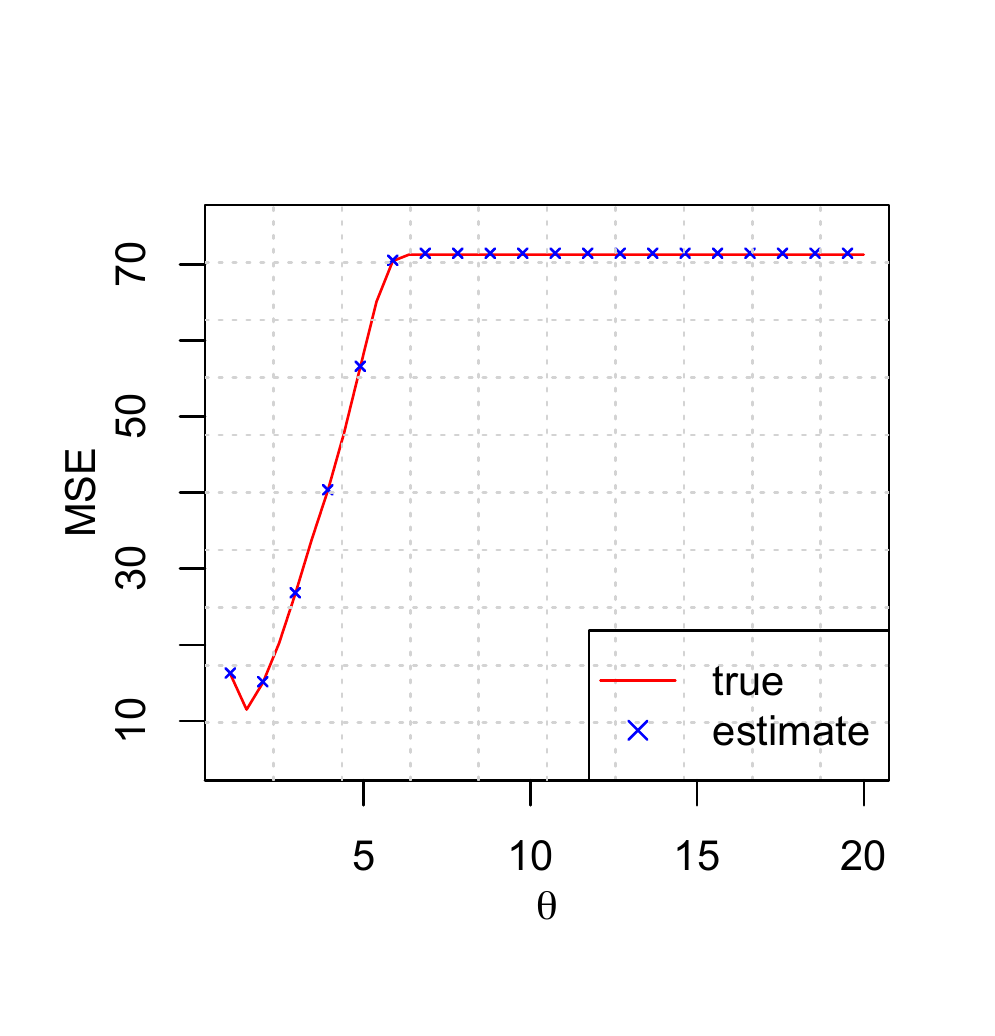}  \vspace{-0.2cm} \\
           \hspace{-.5cm} Bridge ($q=0.1$) &  \hspace{-.8cm} Bridge ($q=0.5$)&   \hspace{-.8cm} Bridge ($q=0.9$)  \vspace{-0.6cm} \\
             \hspace{-.5cm}
            \includegraphics[scale=0.43]{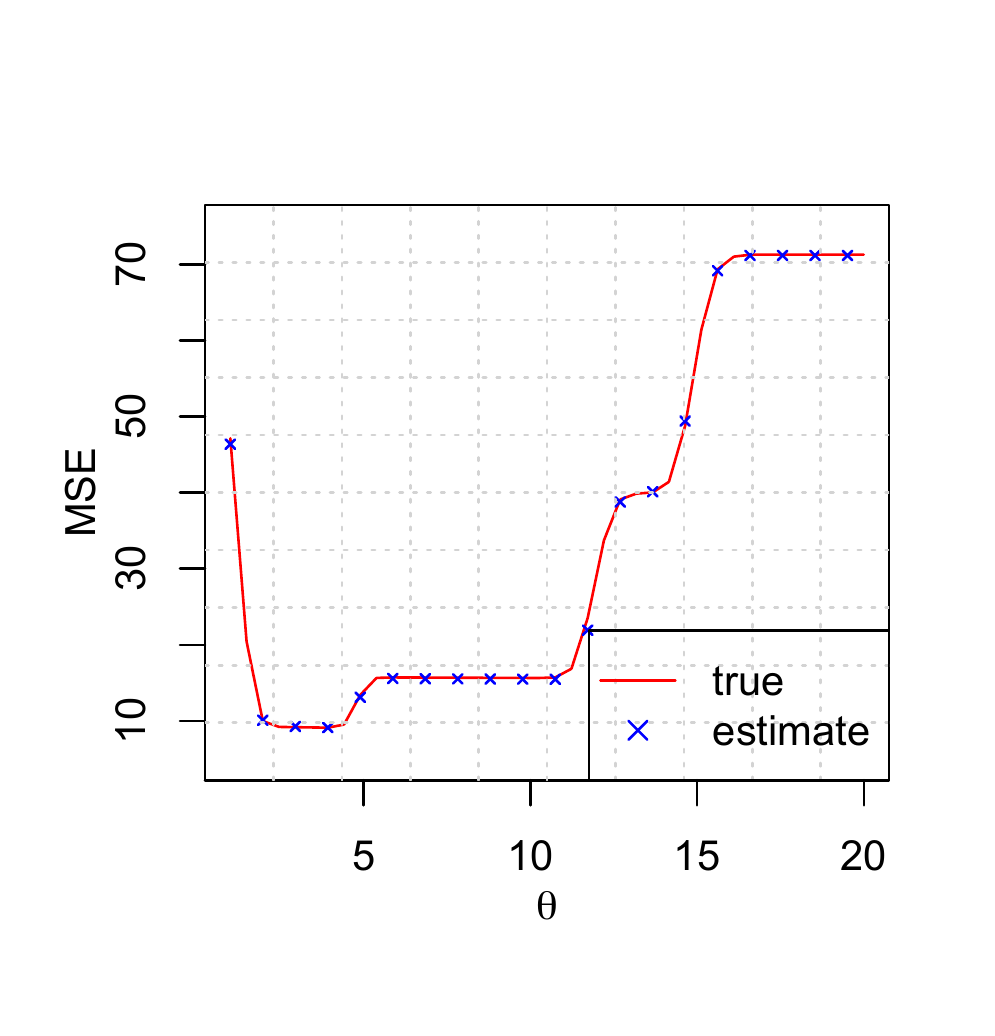} &
             \hspace{-.8cm}
            \includegraphics[scale=0.43]{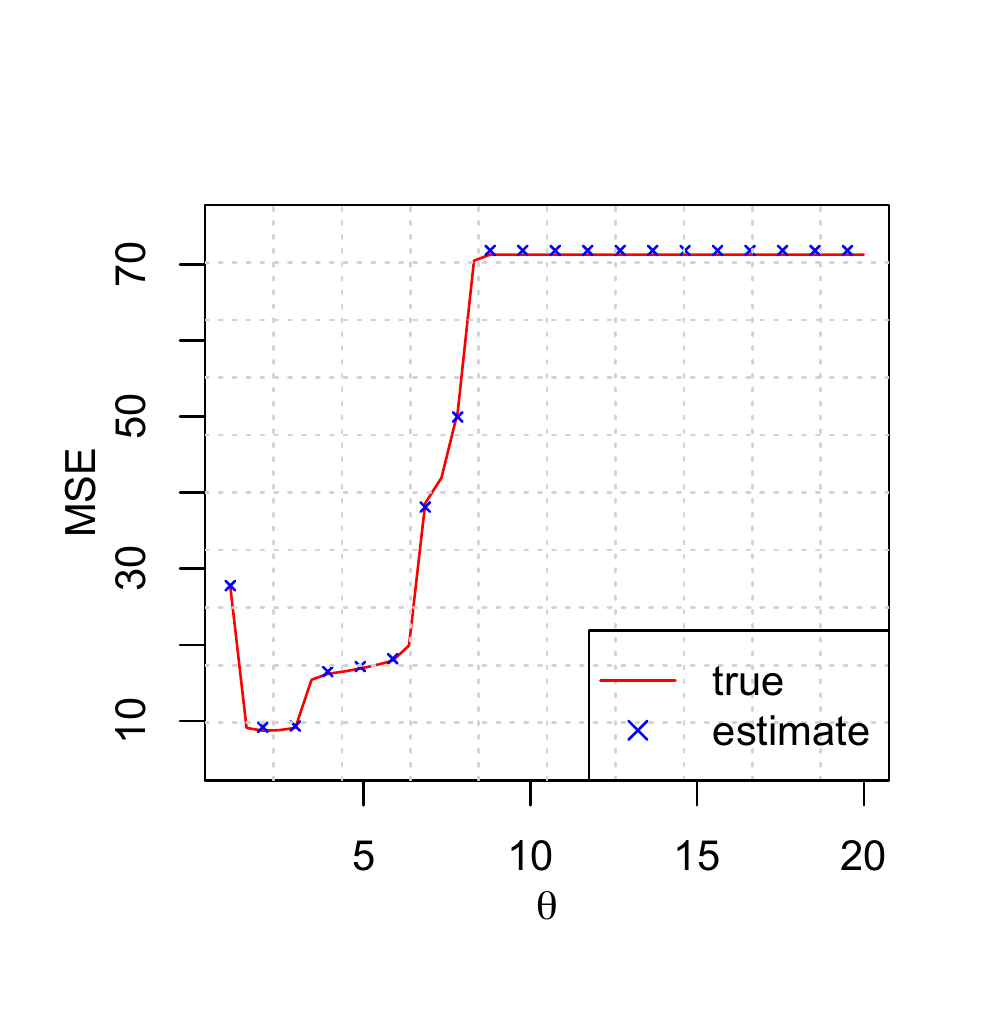} &
             \hspace{-.8cm}
            \includegraphics[scale=0.43]{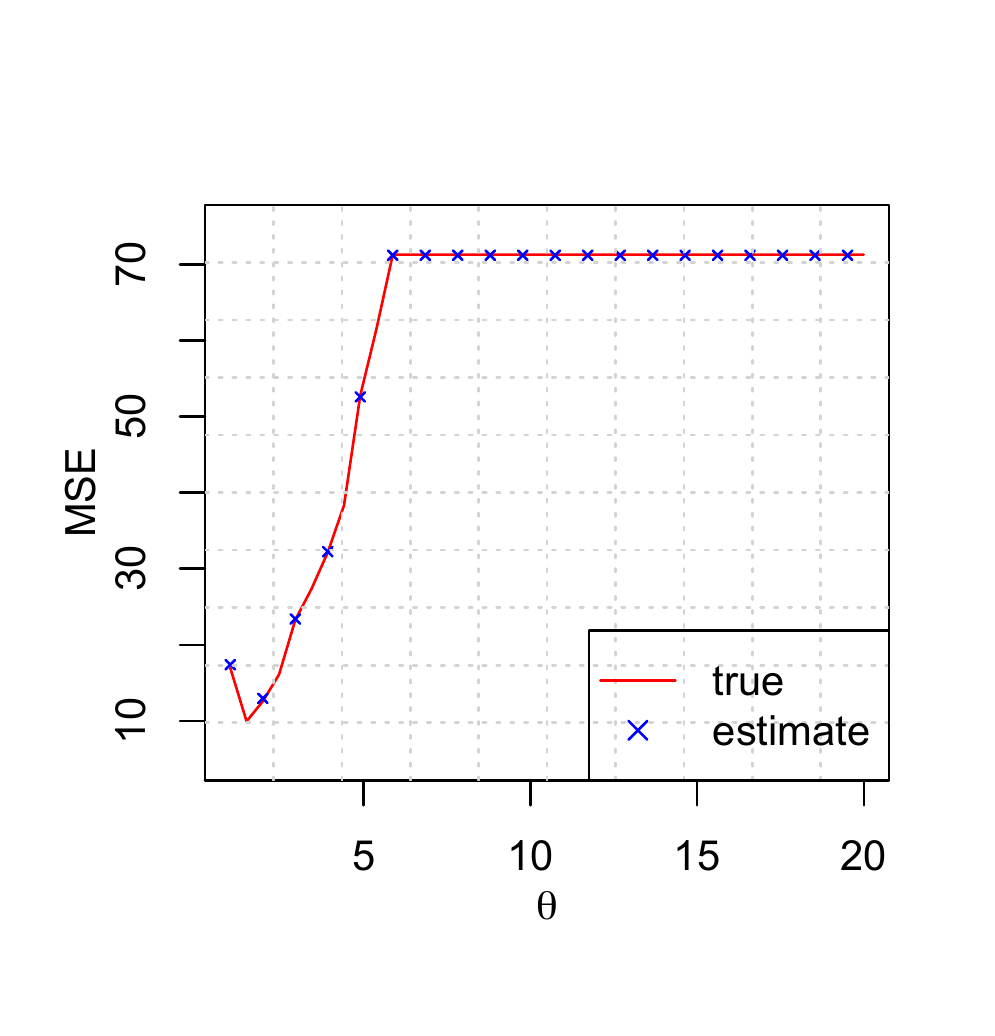}  \vspace{-0.5cm}
        \end{tabular}
        \caption{Expected MSE under the additive Gaussian model. The truth (red curve) is computed via monte carlo simulation. The estimate (blue cross) is the average of the unbiased estimator over 100 repetitions. } \label{mse:plot:case1}
    \end{center}
\end{figure}

It is straightforward to verify that the first three penalty functions above satisfy the conditions in Corollary \ref{cor2} for $\theta \in (0,20]$. Hence we can use the formula \eqref{dfformula} in Corollary \ref{cor2} to construct an unbiased estimator for the \df of the corresponding estimator $S_{\theta}(Y)$ when $\theta \in (0,20]$. For the bridge-penalty function, we use the result in Corollary \ref{cor3done} to obtain the estimator for the \df. Moreover, for each matrix estimator $S_{\theta}(Y)$, we compute its \df (the ground truth) according to the definition \eqref{original}.

Figure \ref{df:plot:case1} depicts the true \df and its unbiased estimate for the aforementioned non-convex penalties with $\theta$ varying over $[0,20]$. It is clear that  the ground truth and the (averaged) estimates are well matched for all the penalties and values of $\theta$ under consideration, thus offering empirical support for the correctness of the derived \df expressions.

In addition to \df, we further evaluate the estimation of the expected MSE $\mathbb{E}\|S_{\theta}(Y)-M^{\ast}\|_2^2$. Recall that for a given $S_{\theta}(Y)$, once an unbiased estimator of the \df is available, an unbiased estimate for the expected MSE can be constructed based on \eqref{unbiased:one}. In the present case, we will use the \df estimates to obtain the estimates for $\mathbb{E}\|S_{\theta}(Y)-M^{\ast}\|_2^2$ according to \eqref{unbiased:one}. Figure \ref{mse:plot:case1} shows the expected MSE and its estimates for the four types of non-convex penalties with $\theta \in [0,20]$. We observe that the (averaged) estimates are well aligned with the truth.

\subsection{Multivariate linear regression}

\begin{figure}[tb!]
    \begin{center}
        \begin{tabular}{ccc}
            \hspace{-.5cm}  SCAD & \hspace{-.8cm} MC+& \hspace{-.8cm} Log   \vspace{-0.6cm} \\
              \hspace{-.5cm}
            \includegraphics[scale=0.43]{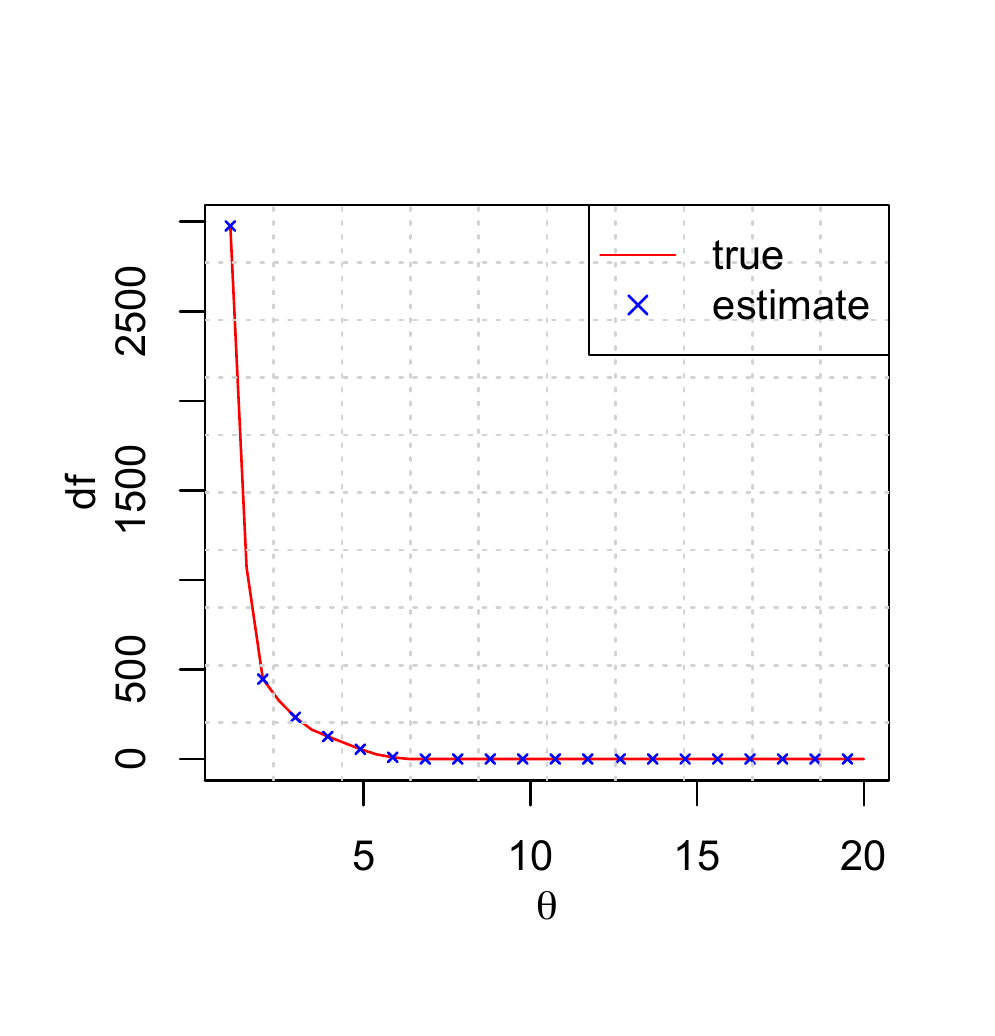} &
             \hspace{-.8cm}
            \includegraphics[scale=0.43]{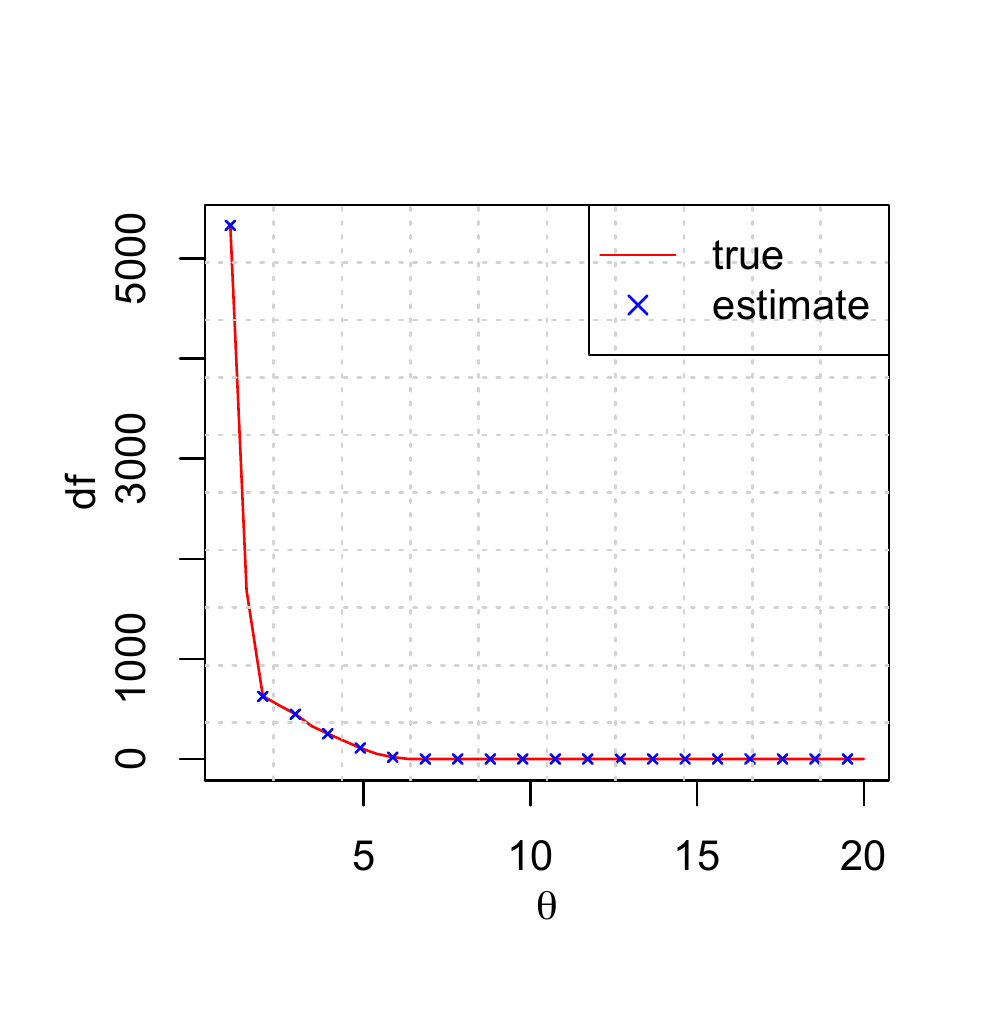} &
             \hspace{-.8cm}
            \includegraphics[scale=0.43]{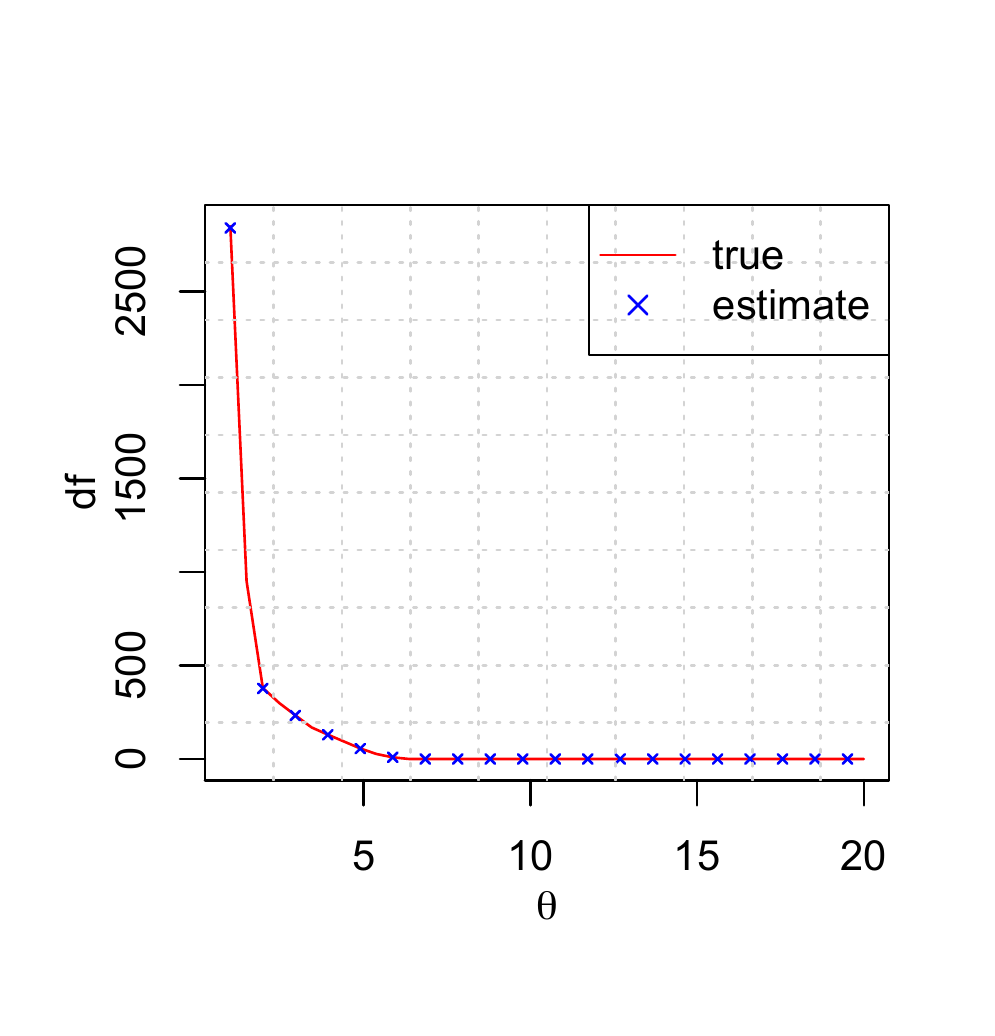}  \vspace{-0.2cm} \\
           \hspace{-.5cm} Bridge ($q=0.1$) &  \hspace{-.8cm} Bridge ($q=0.5$)&   \hspace{-.8cm} Bridge ($q=0.9$)  \vspace{-0.6cm} \\
             \hspace{-.5cm}
            \includegraphics[scale=0.43]{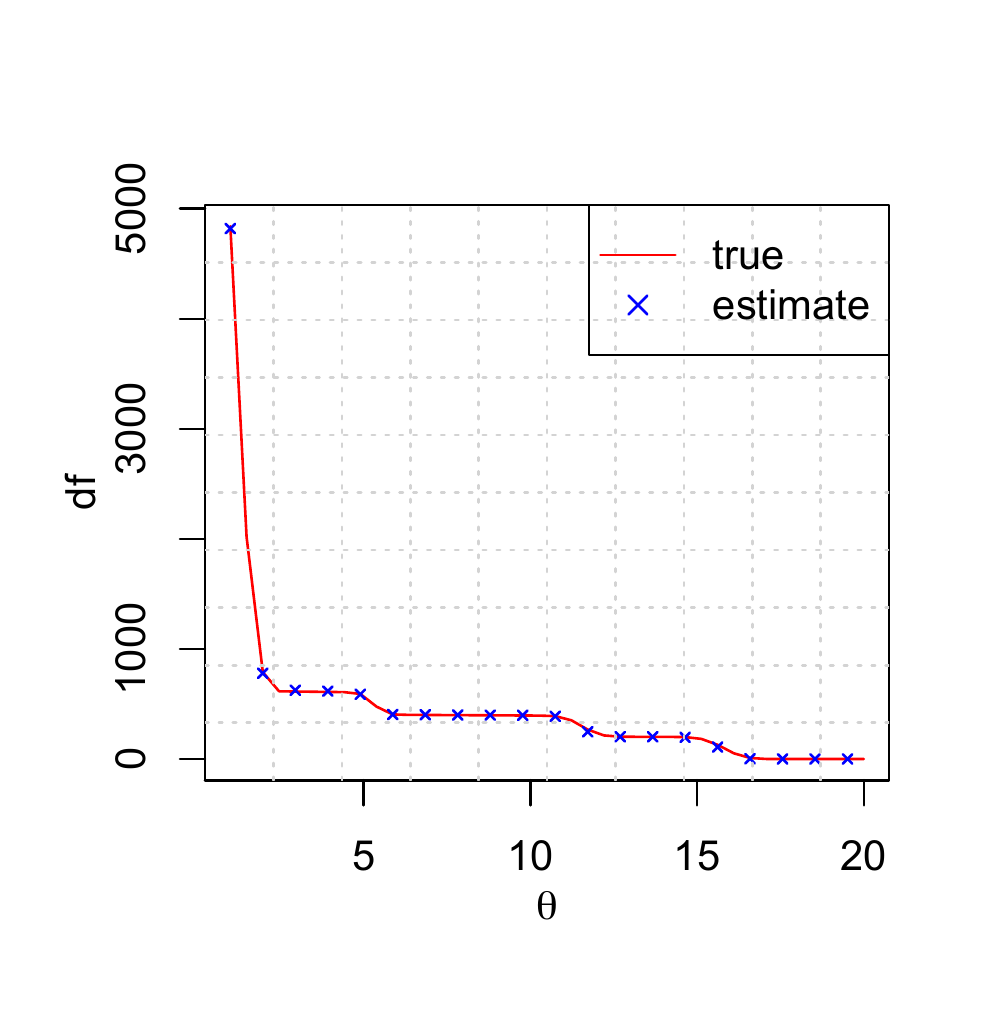} &
             \hspace{-.8cm}
            \includegraphics[scale=0.43]{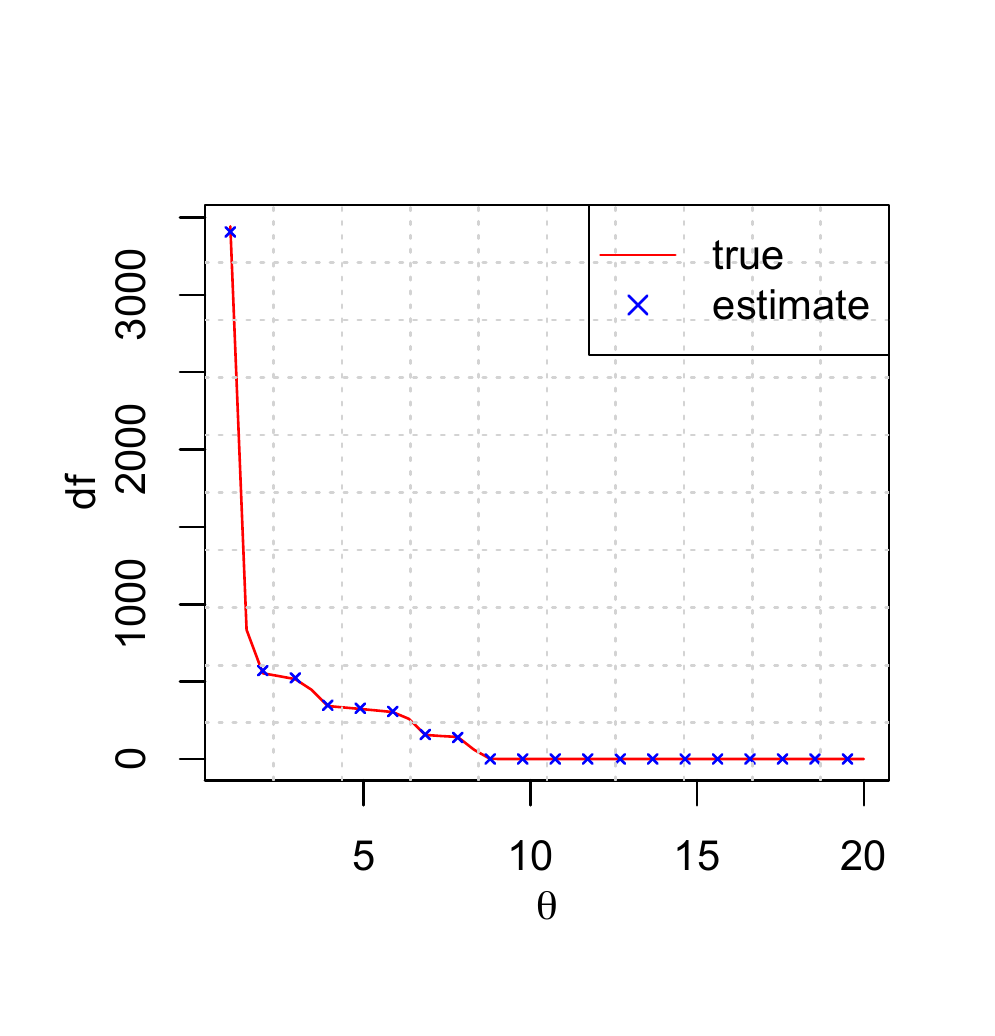} &
             \hspace{-.8cm}
            \includegraphics[scale=0.43]{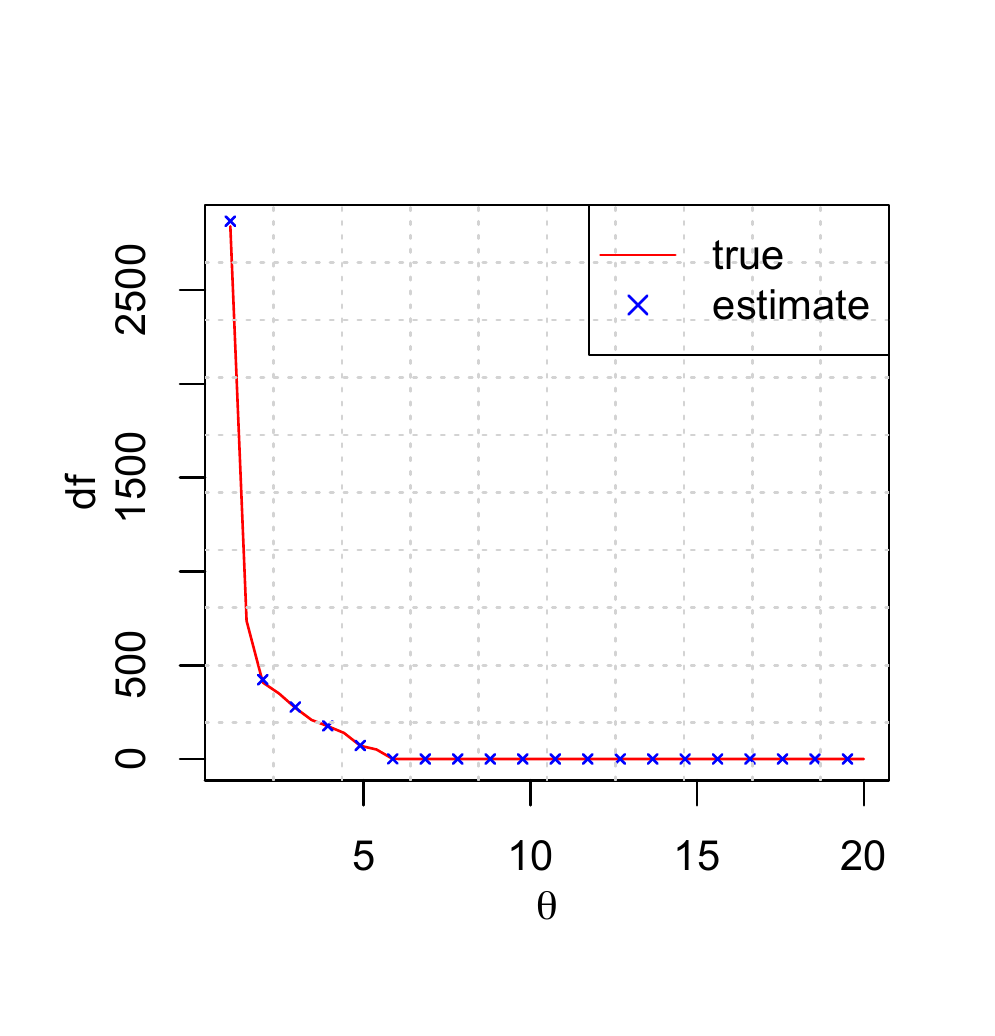}  \vspace{-0.5cm}
        \end{tabular}
        \caption{Degrees of freedom under multivariate linear regression. The true df (red curve) is computed from \eqref{original}. The estimate (blue cross) is the average of the unbiased estimator over 100 repetitions. } \label{df:plot:case2}
    \end{center}
\end{figure}

\begin{figure}[htb!]
    \begin{center}
        \begin{tabular}{ccc}
            \hspace{-.5cm}  SCAD & \hspace{-.8cm} MC+& \hspace{-.8cm} Log   \vspace{-0.6cm} \\
              \hspace{-.5cm}
            \includegraphics[scale=0.43]{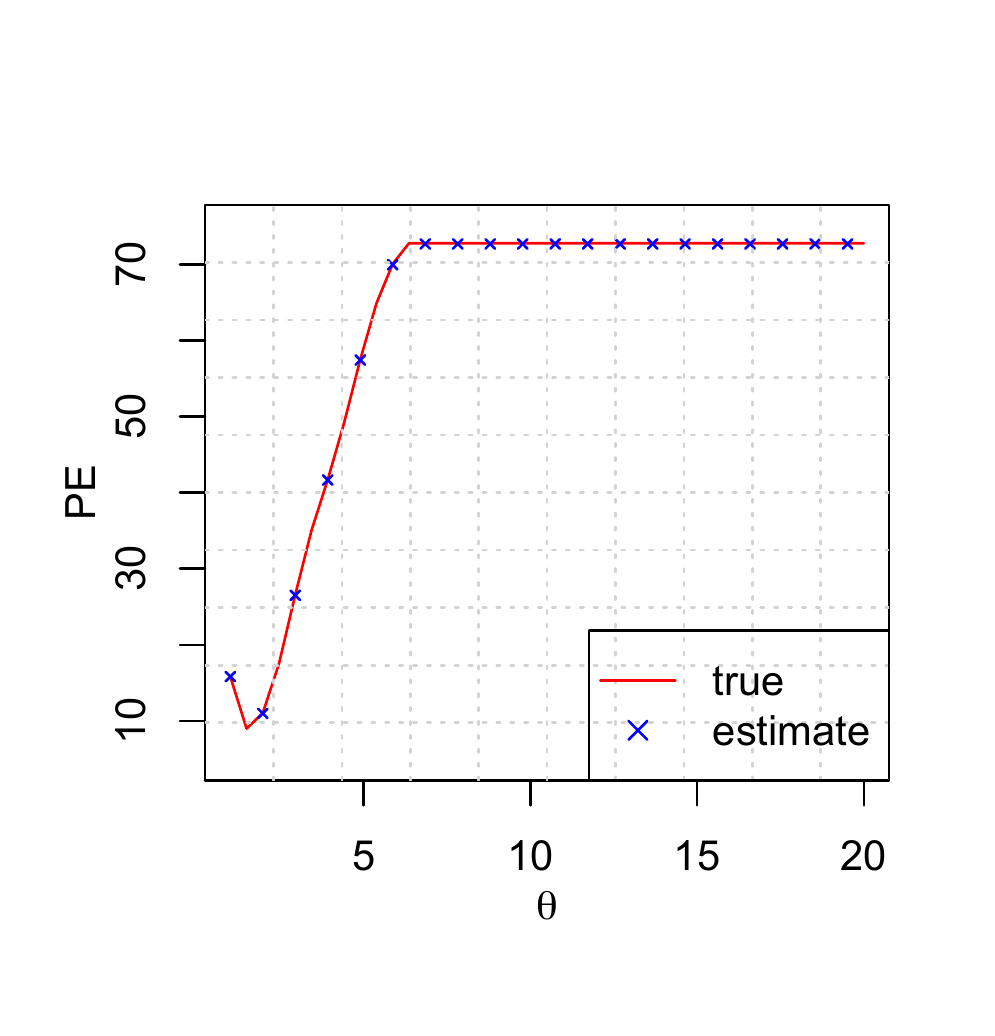} &
             \hspace{-.8cm}
            \includegraphics[scale=0.43]{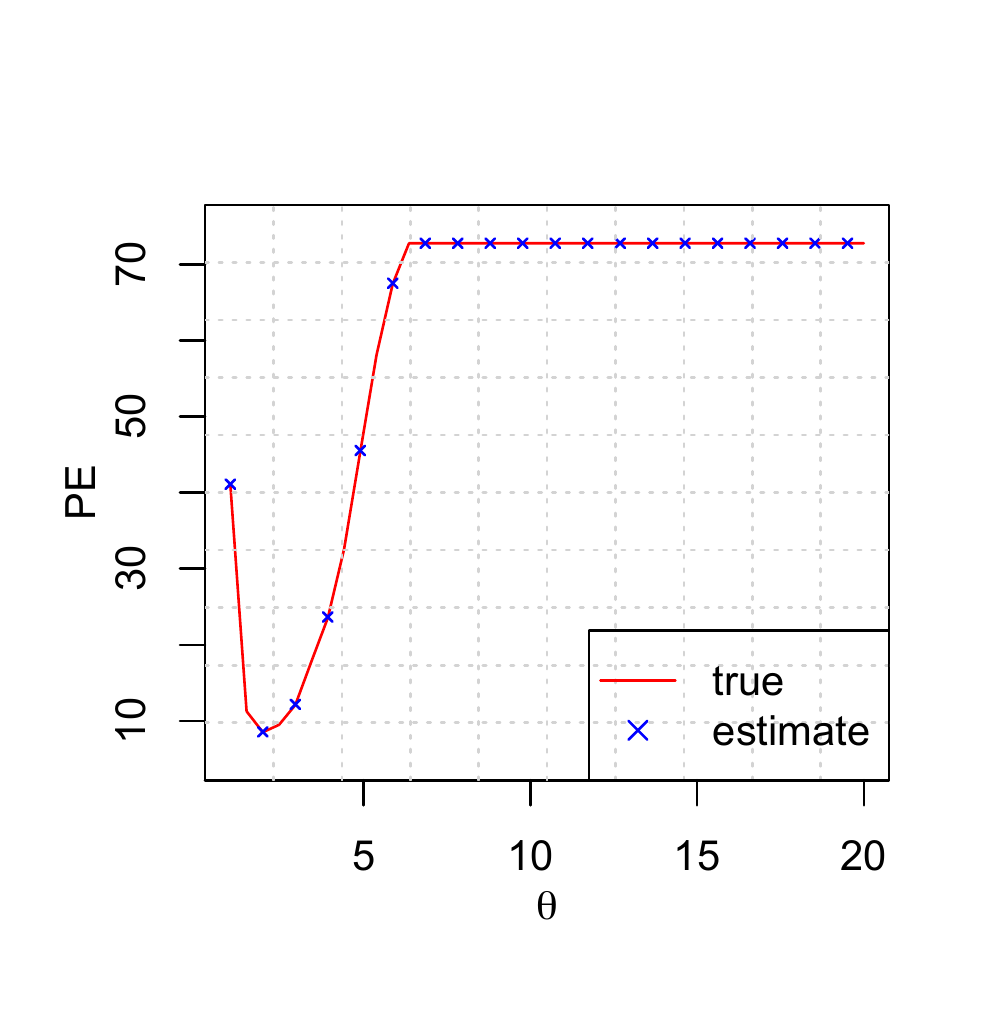} &
             \hspace{-.8cm}
            \includegraphics[scale=0.43]{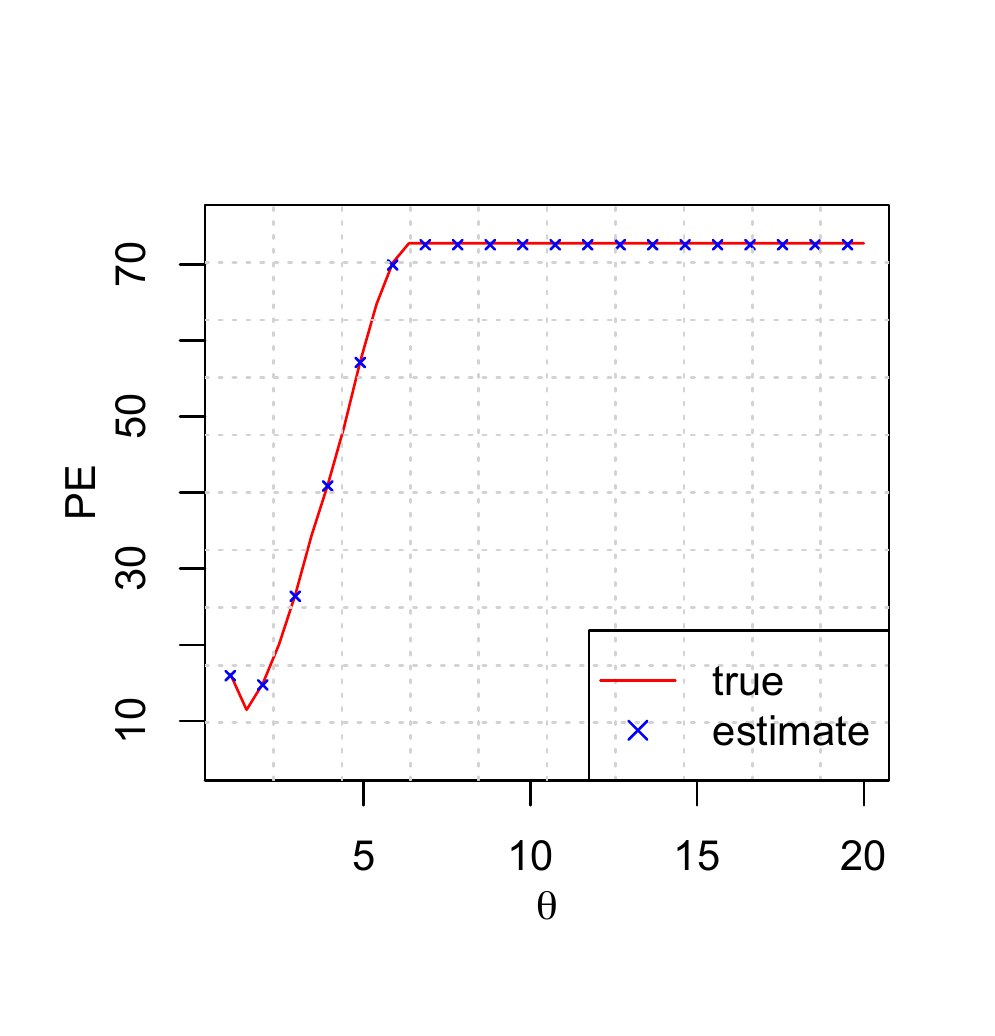}  \vspace{-0.2cm} \\
           \hspace{-.5cm} Bridge ($q=0.1$) &  \hspace{-.8cm} Bridge ($q=0.5$)&   \hspace{-.8cm} Bridge ($q=0.9$)  \vspace{-0.6cm} \\
             \hspace{-.5cm}
            \includegraphics[scale=0.43]{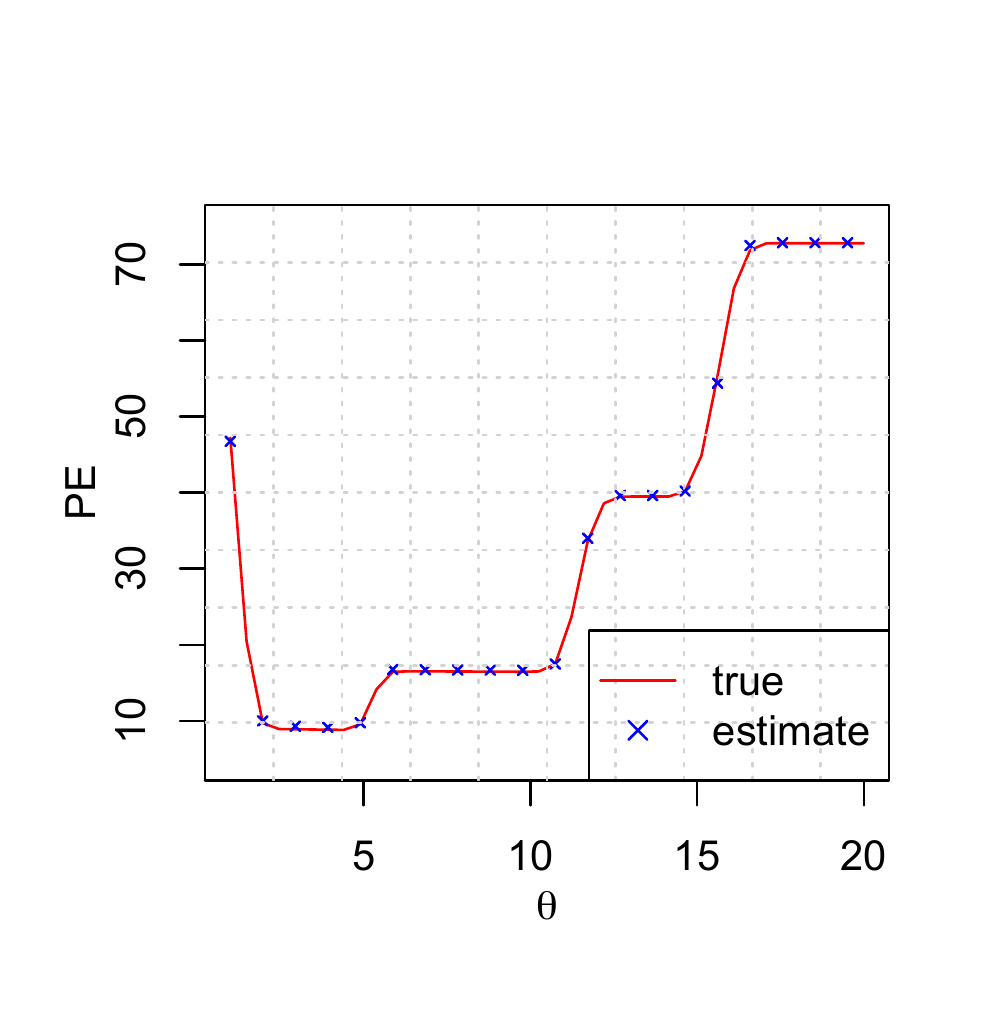} &
             \hspace{-.8cm}
            \includegraphics[scale=0.43]{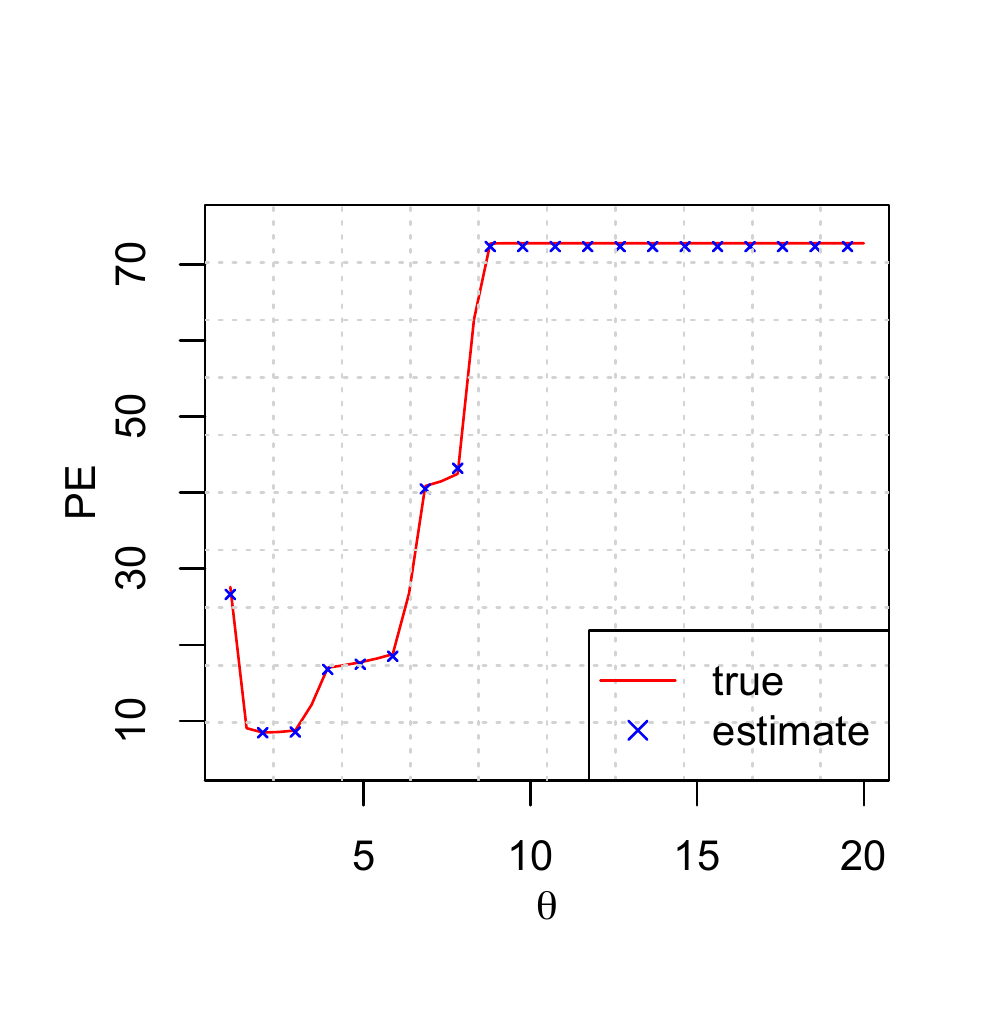} &
             \hspace{-.8cm}
            \includegraphics[scale=0.43]{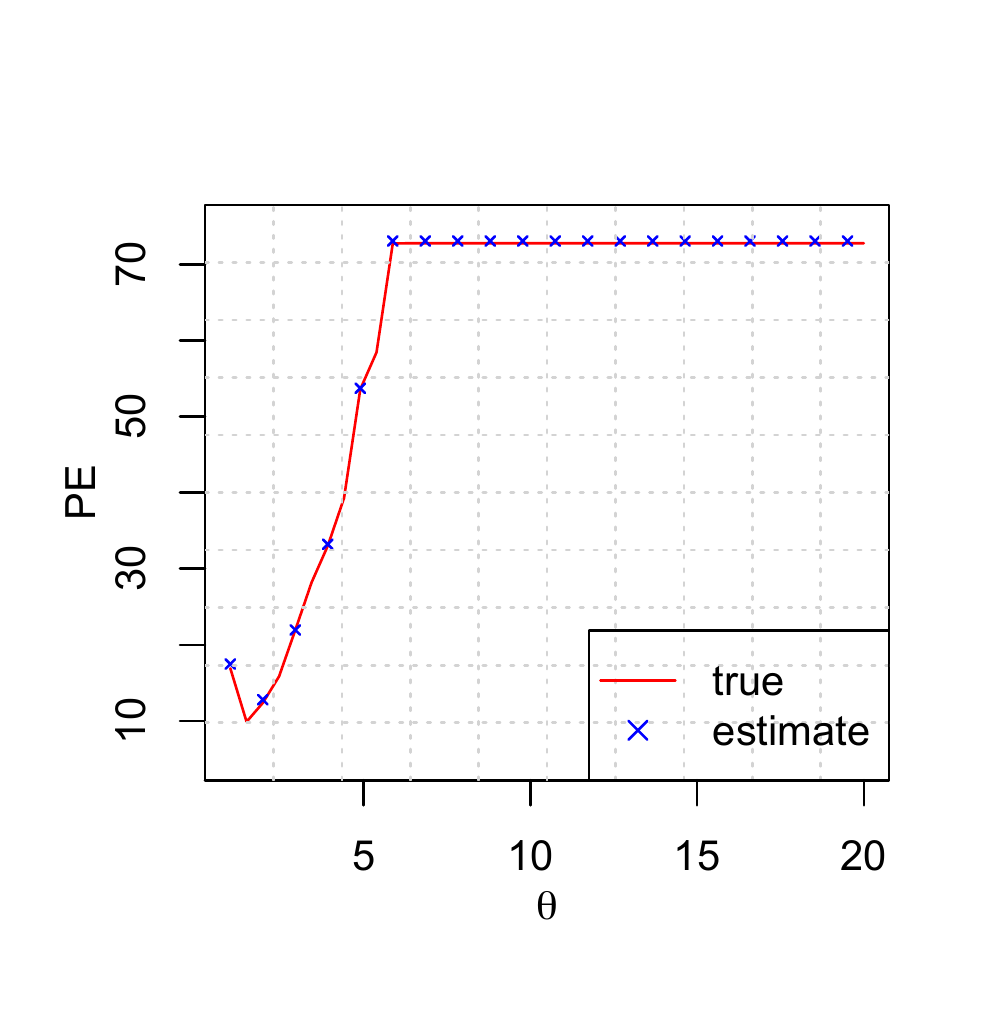}  \vspace{-0.5cm}
        \end{tabular}
        \caption{Prediction error (PE) under multivariate linear regression. The truth (red curve) is computed via monte carlo simulation. The estimate (blue cross) is the average of the unbiased estimator over 100 repetitions. } \label{mse:plot:case2}
    \end{center}
\end{figure}

We generate the data $Y$ according to the multivariate linear regression model \eqref{multimodel}:
\begin{align*}
Y=XM^{\ast}+\mathcal{E},   
\end{align*}
where $Y\in \mathbb{R}^{m\times n}, X\in \mathbb{R}^{m\times p}, M^{\ast}\in \mathbb{R}^{p\times n}$, and $\mathcal{E}=(\epsilon_{ij})_{m\times n}$ with $\epsilon_{ij}\overset{\text{iid}}{\sim}N(0, \tau^2)$. We set $m=300, n=p=100, \tau=0.1$, and use the $M^{\ast}$ from Section \ref{agm:form}. Each row of the design matrix $X$ is independently sampled from $N(\B 0, \B \Sigma)$, where $\B \Sigma$ is a Toeplitz matrix with the $(i,j)$th entry equal to $\frac{1}{2^{|i-j|}m}$ for $1\leq i, j \leq n$. We consider the regularized estimator $RM_{\theta}(Y)$ in \eqref{rmreal} with the same non-convex penalty functions studied in Section \ref{agm:form}. In the current regression setting, the \df of $RM_{\theta}(Y)$ is aligned with the in-sample prediction error $\mathbb{E}\|XM^{\ast}-XRM_{\theta}(Y)\|_2^2$ and defined as
\[
df(RM_{\theta}(Y))=\sum_{ij}\mbox{Cov}((XRM_{\theta}(Y))_{ij},Y_{ij})/\tau^2.
\]
According to Corollaries \ref{final:last0} and \ref{final:last}, we can obtain the estimates for the \df. As in Section \ref{agm:form}, we can also construct the estimates for the prediction error (PE) according to \eqref{unbiased:one}. Figures \ref{df:plot:case2} and \ref{mse:plot:case2} depict the comparison between the estimates and the truth for the \df and PE, respectively. As is clear from the plots, the (averaged) estimates are well matched with the truth. These results empirically validate the \df expressions showed in Section \ref{sec4}.

\section{Conclusion}
\label{discussion:sec}

In this paper, we have presented a systematic study of computing the degrees of freedom for a wide range of low rank matrix estimators, under the SURE framework. As a building block for the computation, the divergence formula for general spectral functions is derived by appealing to a fundamental result on differentiability of matrix functions due to ~\cite{Shapiro2002}. We have put a particular emphasis on the validity of Stein's Lemma. For a class of estimators, we rigorously verify the regularity conditions, invoke the divergence formula, and obtain \df estimates. For other estimators to which Stein's Lemma is not readily applicable, we propose a new Gaussian convolution method and successfully derive their \df expressions. The estimators covered in this paper include the ones studied in the recent literature. For these estimators, our treatment either provides a simpler derivation or complements the existing analysis by a rigorous verification for the use of Stein's Lemma.



\section{Appendix}

This appendix contains the proof of all the main results. The organization is listed below:
\begin{itemize}
\item[1.] Appendix \ref{pf:divergence} proves Corollary \ref{corcor1}.
\item[2.] Appendix \ref{useful:lemma} proves a lemma that is useful in multiple places. 
\item[3.] Appendix \ref{proof:cor2} proves Corollary \ref{cor2}.
\item[4.] Appendix \ref{pf:l0} proves Corollary \ref{cor3add}.
\item[5.] Appendix \ref{pf:lq} proves Corollary \ref{cor3done}.
\item[6.] Appendix \ref{pf:rank} proves Corollary \ref{cor3}.
\item[7.] Appendix \ref{proof:cor6} proves Corollary \ref{cor4}.
\item[8.] Appendix \ref{proof;cor78} proves Corollaries \ref{final:last0} and \ref{final:last}.
\item[9.] Appendix \ref{SURE:condition} reviews the regularity conditions for SURE formulas.
\end{itemize}

\subsection{Proof of Corollary \ref{corcor1}} \label{pf:divergence}

We present a more general result than what appears in Corollary \ref{corcor1}, and prove the general result by making use of Lemmas \ref{shap} and \ref{symmetry}. The proof of Corollary \ref{corcor1} follows as a special case. 

\begin{theorem*} \label{main}
Given a matrix $Y\in \mathbb{R}^{m\times n}$ with singular values $\sigma_1 \geq \ldots \geq \sigma_n$; let $s_1> s_2 >\ldots > s_K \geq 0$ be the set of distinct singular values, and $d_1,\ldots, d_K$ be the associated multiplicities. Consider a function $f : \mathbb{R}^+ \rightarrow \mathbb{R}$ with $f(0)=0$ that is differentiable at every point $s_i$ with $d_i>1$ and directionally differentiable at every point $s_i$ with $d_i=1$. Let $\mathcal{D}$ denote
the set of points where $f$ is directionally differentiable but not differentiable. Then
\begin{eqnarray*}
&&\sum_{i=1}^m\sum_{j=1}^n \frac{\partial [S(Y;f)]_{ij}}{\partial Y_{ij}} \\
= &&\sum_{s_i >0}\left [  \frac{d_i(d_i+1)}{2}f'(s_i)\mathbbm{1}(s_i\notin \mathcal{D} )+ \left((m-n)d_i+\frac{d_i(d_i-1)}{2} \right) \frac{f(s_i)}{s_i}\right ] + \label{thm1} \\
&& d_K(m-n+d_K)  f'(0)\mathbbm{1}(s_K=0)+ \sum_{1\leq i \neq j\leq K} d_id_j\frac{s_if(s_i)-s_jf(s_j)}{s_i^2-s_j^2}+  \nonumber \\
&&\hspace{-0.9cm} \sum_{\substack{ s_k>0 \\ s_k \in \mathcal{D}}} \sum_{i=1}^m\sum_{j=1}^n\left [   u_{ik}^2v_{jk}^2f'(s_k;1)\mathbbm{1}(u_{ik}v_{jk}>0) -u_{ik}^2v_{jk}^2f'(s_k;-1)\mathbbm{1}(u_{ik}v_{jk} < 0) \right ], \nonumber 
\end{eqnarray*}
where $u_{ik}(v_{ik})$ is the $(i,k)th$ entry of the left (right) singular vector matrix $U(V)$ of $Y$.
\end{theorem*}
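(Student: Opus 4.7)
The plan is to reduce the rectangular/SVD problem to the symmetric/eigenvalue setting via Lemma \ref{symmetry}, then apply Shapiro's directional derivative formula in Lemma \ref{shap} to the symmetrized function, and finally read off the divergence from a suitable block of the resulting expression. Concretely, I define $f^*:\mathbb{R}\to\mathbb{R}$ by $f^*(x)=f(x)$ for $x\ge 0$ and $f^*(x)=-f(-x)$ for $x<0$ (which is consistent because $f(0)=0$), and let $F^*$ be the associated symmetric matrix valued function as in \eqref{matrix:value:fun:def}. Using the eigendecomposition $Y^{\ast}=P\Sigma^{\ast}P'$ of Lemma \ref{symmetry}, and grouping the eigenvectors $\tfrac{1}{\sqrt 2}\bigl(u_i,v_i\bigr)^{\!\top}$ and $\tfrac{1}{\sqrt 2}\bigl(u_i,-v_i\bigr)^{\!\top}$ that correspond to the paired eigenvalues $\pm\sigma_i$, a direct block computation will confirm the identity $F^*(Y^*)=\bigl[\begin{smallmatrix} 0 & S(Y;f)\\ S(Y;f)' & 0\end{smallmatrix}\bigr]$ stated in the text; the eigenvectors in the $\bar U$ block contribute zero because $f^*(0)=0$.

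Next I would link the divergence of $S(Y;f)$ to directional derivatives of $F^*$. Since $Y\mapsto Y^*$ is linear with $Y^{*}+t(e_i e_{m+j}^{\,\prime}+e_{m+j} e_i^{\,\prime})=(Y+t e_i e_j^{\,\prime})^*$, the chain rule gives
\[
\frac{\partial [S(Y;f)]_{ij}}{\partial Y_{ij}} \;=\; \bigl[F^{*\prime}\bigl(Y^*;\, e_i e_{m+j}^{\,\prime}+e_{m+j} e_i^{\,\prime}\bigr)\bigr]_{i,m+j}
\]
at points where the one-sided derivatives agree. Summing over $i,j$ and invoking Lemma \ref{shap} with this choice of $H$ produces three families of contributions: (i) the pairwise terms $\tfrac{f^*(\lambda_l)-f^*(\lambda_k)}{\lambda_l-\lambda_k}$ mediated by the spectral projectors $E_lE_l'$, $E_kE_k'$ of $Y^*$; (ii) the ``diagonal block'' terms $E_k\Psi_k(E_k' H E_k)E_k'$ that carry the directional derivatives $f^{*\prime}(\lambda_k;\pm 1)$; and (iii) a zero contribution from the kernel block of $Y^*$ (of dimension $m-n$, enlarged by $2d_K$ when $s_K=0$) whenever $f$ is differentiable at $0$.

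The decisive simplification is a sign-combination identity. Writing $\lambda_k\in\{\pm s_a\}_{a=1}^{K}\cup\{0\}$, the ordered pair $(s_a,s_b)$ together with its mirror $(-s_a,-s_b)$ contribute the coefficient $\tfrac{f(s_a)-f(s_b)}{s_a-s_b}$, whereas the mixed pair $(s_a,-s_b)$ with $a\ne b$ gives $\tfrac{f(s_a)+f(s_b)}{s_a+s_b}$, and these combine through
\[
\frac{f(s_a)-f(s_b)}{s_a-s_b}+\frac{f(s_a)+f(s_b)}{s_a+s_b}\;=\;\frac{2\bigl(s_a f(s_a)-s_b f(s_b)\bigr)}{s_a^{2}-s_b^{2}},
\]
which is precisely the cross-term $d_a d_b \tfrac{s_a f(s_a)-s_b f(s_b)}{s_a^{2}-s_b^{2}}$ appearing in the statement once I track the projector masses via $\sum_{ij} u_{ia}u_{ib}v_{jb}v_{ja}$-type sums. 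The ``self-mirror'' pair $(s_a,-s_a)$ gives the coefficient $\tfrac{f(s_a)}{s_a}$, which, combined with the pairing of $s_a$ against the $(m-n)$-dimensional zero block, produces the factor $(m-n)d_a+\tfrac{d_a(d_a-1)}{2}$ multiplying $f(s_a)/s_a$. The zero eigenvalue of $Y^*$ similarly contributes $d_K(m-n+d_K)f'(0)\mathbbm{1}(s_K=0)$ from the diagonal block, and when $d_i=1$ the diagonal term supplies $\tfrac{d_i(d_i+1)}{2}f'(s_i)$ in the differentiable case or the one-sided pieces weighted by $u_{ik}^{2}v_{jk}^{2}\mathbbm{1}(u_{ik}v_{jk}\gtrless 0)$ in the directionally differentiable case.

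The main obstacle will be the bookkeeping in the directional, non-differentiable regime: for singular values $s_k$ of multiplicity one lying in $\mathcal{D}$, one cannot collapse $f^{*\prime}(s_k;h)$ into a single slope, so $\Psi_k$ must be expanded in the one-dimensional eigenbasis of $E_k' H E_k = u_{ik}v_{jk}$. Summing $f^{*\prime}(s_k; u_{ik}v_{jk})$ over $i,j$ and splitting by the sign of $u_{ik}v_{jk}$ (using the positive homogeneity of the directional derivative) is what delivers the final term of the theorem. Corollary \ref{corcor1} then drops out by specializing to the case $\mathcal{D}=\emptyset$, where $f^{*\prime}(s_k;\pm 1)=\pm f'(s_k)$ collapses the last sum into the first.
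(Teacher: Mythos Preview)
Your proposal is correct and follows essentially the same route as the paper: odd extension $f^*$, symmetrization via Lemma~\ref{symmetry}, application of Shapiro's Lemma~\ref{shap} with $H=h_{ij}$, and then bookkeeping over the paired eigenvalues $\{\pm s_a\}\cup\{0\}$ of $Y^*$. The paper organizes the algebra through two auxiliary trace quantities $T(\mu_l,\mu_k)=\sum_{ij}\mathrm{tr}[E_l(1)E_l(2)'e_{ij}'E_k(1)E_k(2)'e_{ij}']$ and $G(\mu_l,\mu_k)=\sum_{ij}\mathrm{tr}[E_l(1)E_l(1)'e_{ij}E_k(2)E_k(2)'e_{ij}']$, whereas you phrase the same combinations via the sign-pairing identity; these are the same computation, and your treatment of the $\mathcal D$ case via $E_k'HE_k=u_{ik}v_{jk}$ matches the paper's final paragraph exactly.
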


According to the above theorem, if $f$ is differentiable at some singular value $s_j$, the corresponding singular vectors do not appear in the divergence formula of $S(Y;f)$. Under the conditions of Corollary \ref{corcor1}, $\mathcal{D}=\emptyset$. This directly leads to the formula appearing in Corollary \ref{corcor1}. From the proof to be presented below, we can show a more general result: the directional differentiability of $f$ at singular values of $Y$ is sufficient to guarantee the existence of $\nabla \cdot S(Y;f)$. But since the explicit formula is complicated, we decide to skip it for simplicity.  \\

\begin{proof}
We focus on the more complicated setting when $s_K=0$. The case in which $Y$ is of full rank can be analyzed in the same way. We first assume $f$ is differentiable at every point $s_j, 1\leq j \leq K$. Consider the symmetric matrix in Lemma \ref{symmetry}: it follows that $Y^{\ast}$ has distinct eigenvalues $\pm s_1,\ldots, \pm s_{K-1}, 0$ with multiplicities $d_1, \ldots, d_{K-1}, 2d_K+m-n$. Define a real function $f^{\ast} : \mathbb{R}\rightarrow \mathbb{R}$ as $f^{\ast}(x)=f(x)$ for $x\geq 0$ and $f^{\ast}(x)=-f(-x)$ for $x<0$. Let $F^{\ast}(Y^{\ast})$ be the corresponding matrix valued function stated in Lemma \ref{shap}. The eigenvalue decomposition in Lemma \ref{symmetry} implies a key connection between $F^{\ast}(Y^{\ast})$ and $S(Y;f)$,
\begin{eqnarray}
F^{\ast}(Y^{\ast})=
\begin{bmatrix}
0 & S(Y;f) \\
S(Y;f)' & 0
\end{bmatrix}.   \label{key}
\end{eqnarray}
Let $e_{ij} \in \mathbb{R}^{m\times n}$ be the canonical basis matrix in Euclidean space, i.e., the matrix with all entries equal to $0$ but the $(i,j)$th equal to $1$, and denote
$$
h_{ij}=
\begin{bmatrix}
0&e_{ij}\\
e_{ij}' &0
\end{bmatrix}\,,
$$
\eqref{key} leads to
\begin{eqnarray}
\lim_{t \, \downarrow \, 0}\frac{F^{\ast}(Y^{\ast} + t h_{ij})-F^{\ast}(Y^{\ast})}{t} =
\begin{bmatrix}
0 & \frac{\partial S(Y;f)}{\partial Y_{ij}} \\
 \left (\frac{\partial S(Y;f)}{\partial Y_{ij}} \right )^T & 0
\end{bmatrix}.   \label{key2}
\end{eqnarray}
By the differentiability of $f$ at $s_j, 1 \leq j \leq K$; $f^{\ast}$ is differentiable at all the distinct eigenvalues of $Y^{\ast}$. We can thus apply Lemma \ref{shap} to $F^{\ast}(Y^{\ast})$ with $H=h_{ij}$. After a few algebraic manipulations, it is not hard to obtain\footnote{Note that the second term on the right hand side of Equation \eqref{shapiro} in Lemma \ref{shap} is $\sum_{k=1}^q f'(\mu_k(X))E_kE_k' H E_kE_k'$}
\begin{align}
&\hspace{-0.cm} \sum_{i,j}\frac{\partial [S(Y;f)]_{ij}}{\partial Y_{ij}} = \sum_{i,j}\mbox{tr}\left \{\frac{S(Y;f)}{\partial Y_{ij}} e_{ij}'\right \} \nonumber \\
&=\frac{1}{2} \sum_{i,j} \sum_{l\neq k, l, k=1}^q g_{lk}\{2\mbox{tr}[E_l(1) E_l(2)' e_{ij}' E_k(1) E_k(2)' e_{ij}']+ \nonumber \\
& \mbox{tr}[E_l(1) E_l(1)' e_{ij} E_k(2) E_k(2)' e_{ij}'] +\mbox{tr}[E_k(1) E_k(1)' e_{ij} E_l(2) E_l(2)' e_{ij}']\} + \nonumber \\
& \hspace{-.cm} \sum_{i,j}\sum_{k=1}^q (f^{\ast}(\mu_k))' \{ \mbox{tr} [E_k(1) E_k(2)' e_{ij}' E_k(1) E_k(2)' e_{ij}' ] + \mbox{tr}[E_k(1) E_k(1)' e_{ij} E_k(2) E_k(2)' e_{ij}'] \}  \nonumber   \\
&\triangleq  I + II,   \label{key0}
\end{align}
where $g_{lk}=\frac{f^{\ast}(\mu_l)-f^{\ast}(\mu_k)}{\mu_l-\mu_k}\,$, $q=2K-1$ is the number of unique eigenvalues and  $\,E_k(1),E_k(2)$ are the first $m$ rows and last $n$ rows of the eigenvector matrix $E_k$, respectively. We have used $I, II$ to represent the summations $\frac{1}{2}\sum_{i,j} \sum_{l\neq k, l, k=1}^q(\cdot), \sum_{i,j}\sum_{k=1}^q(\cdot)$, respectively. Let the multiplicity of $\mu_k$ be $r_k, E_k(1)=(\B w^k_1,\ldots, \B w^k_{r_k}), E_k(2)=(\B z^k_1, \ldots, \B z^k_{r_k})$ and $w_1^k(i)$ be the $i$th element of $\B w_1^k$. We then have
\begin{align}
&\hspace{-0.cm} T(\mu_l,\mu_k)\triangleq\sum_{ij} \mbox{tr}[E_l(1)E_l(2)'e'_{ij}E_k(1)E_k(2)'e'_{ij}] = \sum_{ij} \sum_{a=1}^{r_l}\sum_{b=1}^{r_k} \mbox{tr}[\B w^l_a(\B z^l_a)'e'_{ij}\B w^k_b(\B z^k_b)'e'_{ij}] \nonumber \\
&=   \sum_{a=1}^{r_l}\sum_{b=1}^{r_k}  \sum_{ij} z^l_a(j)w^k_b(i)z^k_b(j)w^l_a(i) = \sum_{a=1}^{r_l}\sum_{b=1}^{r_k} [(\B w^k_b)'\B w^l_a] \cdot  [(\B z^k_b)'\B z^l_a] \nonumber \\
&\overset{(a)}{=}
\begin{cases}
0 &    \mbox{~if~} |\mu_k| \neq |\mu_l| \mbox{~or~} |\mu_k\mu_l|=0 \\
 \mbox{sign}(\frac{\mu_k}{\mu_l})\frac{r_k}{4} & \mbox{otherwise},
\end{cases}   \label{form1}
\end{align}
where $(a)$ follows by noting that $\B w_a^k, \B z_a^k$ is one the columns of $(\frac{1}{\sqrt{2}}U,\frac{1}{\sqrt{2}}U, \bar{U})$ and $(\frac{1}{\sqrt{2}}V, \frac{-1}{\sqrt{2}}V, 0)$, respectively, indexed by which one of $\pm s_1,\ldots, \pm s_{K-1}, 0$ that $\mu_k$ achieves. Similarly, we also get
\begin{eqnarray}
G(\mu_l,\mu_k)&\triangleq&\sum_{ij} \mbox{tr}[E_l(1)E_l(1)'e_{ij}E_k(2)E_k(2)'e'_{ij}]  =  \sum_{a=1}^{r_l}\sum_{b=1}^{r_k} ||\B w_a^l ||^2 \cdot || \B z_b^k ||^2\nonumber \\
&=&
\begin{cases}
\frac{r_k r_l}{4} &  \mbox{~if~} \mu_l, \mu_k \neq 0 \\
\frac{r_k(m-n+d_K)}{2} & \mbox{~if~} \mu_l=0, \mu_k \neq 0\\
\frac{r_l d_K}{2} &   \mbox{~if~} \mu_l\neq 0, \mu_k =0 \\
d_K(m-n+d_K) & \mbox{~if~} \mu_l=\mu_k=0.
\end{cases}   \label{form2}
\end{eqnarray}
We now use the results \eqref{form1} and \eqref{form2} to calculate $I$ and $II$ in \eqref{key0}. Recall that $\{\mu_k\}_{k=1}^q=\{\pm s_1,\ldots, \pm s_{K-1}, 0\}$, $\{r_k\}_{k=1}^q=\{d_1,\ldots, d_K\}$ and $f^{\ast}$ is an odd function. It is then not hard to see  $\sum_{l \neq k, l,k=1}^q g_{lk} T(\mu_l,\mu_k)=\frac{-1}{2}\sum_{s_k>0}d_k\frac{f(s_k)}{s_k}$ and
\begin{eqnarray*}
&& \hspace{-0.cm} \frac{1}{2}\sum_{l \neq k, l, k=1}^q g_{lk}(G(\mu_l,\mu_k)+G(\mu_k,\mu_l)) \\
&&=\sum_{\substack{ \mu_l\neq 0, \mu_k\neq 0 \\ l \neq k}}g_{lk}\frac{r_kr_l}{4} + \sum_{\mu_l=0,\mu_k \neq 0}g_{lk}\frac{r_k(m-n+2d_K)}{2}\\
&&= \sum_{s_k>0}\frac{f(s_k)}{s_k}\left(\frac{d_k^2}{2}+d_k(m-n) \right )+\sum_{l\neq k, l, k=1}^{K}d_ld_k\frac{f(s_l)s_l-f(s_k)s_k}{s_l^2-s_k^2}
\end{eqnarray*}
Therefore,
\begin{align}
&I =\frac{1}{2}\sum_{l\neq k, l, k=1}^q g_{lk}(2T(\mu_l,\mu_k)+G(\mu_l,\mu_k)+G(\mu_k,\mu_l))=  \nonumber \\
&\hspace{-0.5cm}  \sum_{s_k>0}\frac{f(s_k)}{s_k}\left(\frac{d_k(d_k-1)}{2}+d_k(m-n) \right )+\sum_{l\neq k, l, k=1}^{K}d_ld_k\frac{f(s_l)s_l-f(s_k)s_k}{s_l^2-s_k^2}. \label{I}
\end{align}
Regarding $II$, it is straightforward to do the computation and obtain,
\begin{eqnarray}
II &=&\sum_{k=1}^q (f^{\ast}(\mu_k))'(T(\mu_k,\mu_k)+G(\mu_k,\mu_k)) \nonumber \\
&=&\sum_{s_k>0}\frac{d_k}{2}f'(s_k)+d_K(m-n+d_K)f'(0)+\sum_{s_k>0}\frac{d_k^2}{2}f'(s_k). \label{II}
\end{eqnarray}
Combining \eqref{key0}, \eqref{I} and \eqref{II} gives the divergence formula. When $f$ is only directionally differentiable at some point $s_k$, the first part $I$ remains the same. For the second part $II$, since the multiplicities of $\pm s_k$ are both $1$, we can simplify the related terms in $II$ as (denote $\mu_a=s_k, \mu_b=-s_k$):
\begin{eqnarray*}
&&\sum_{ij}[E_a'(2)e'_{ij}E_a(1)]^2f^{\ast}(s_k;\mbox{sign}(E_a'(2)e'_{ij}E_a(1)))\cdot \mbox{sign}(E_a'(2)e'_{ij}E_a(1)) + \\
&&\sum_{ij}[E_b'(2)e'_{ij}E_b(1)]^2f^{\ast}(-s_k;\mbox{sign}(E_b'(2)e'_{ij}E_b(1)))\cdot \mbox{sign}(E_b'(2)e'_{ij}E_b(1)) = \\
&&\sum_{ij}u_{ik}^2v_{jk}^2f'(s_k;\mbox{sign}(u_{ik}2v_{jk}))\cdot \mbox{sign}(u_{ik}v_{jk}),
\end{eqnarray*}
where we have used $f^{\ast}(s_k;h)=-f^{\ast}(-s_k;-h), E_a(1)=E_b(1)=\frac{\B u_k}{\sqrt{2}}, E_a(2)=-E_b(2)=\frac{\B v_k}{\sqrt{2}}$. This completes the proof.
\end{proof}

\subsection{A Useful Lemma}
\label{useful:lemma}

We derive a lemma below that will be used multiple times in later proofs.

\begin{lemma}\label{key_key}
Under the canonical additive Gaussian model $Y=M^*+\mathcal{E}$, let the singular values of $Y\in \mathbb{R}^{m \times n}$ be $\sigma_1\geq \ldots \geq \sigma_n\geq 0$, then we have $(1)~ \mathbb{E} \left(\frac{\sigma_i}{\sigma_i-\sigma_j} \right)<\infty, (2)~ \mathbb{E}\left(\frac{1}{\sigma_i-\sigma_j}\right)<\infty, (3)~\mathbb{E}\left(\frac{\sigma_i^2}{\sigma_i^2-\sigma_j^2}\right)<\infty$, where $1\leq i<j\leq n$.
\end{lemma}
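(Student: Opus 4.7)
My plan is to exploit the Vandermonde-type eigenvalue repulsion in the joint density of singular values of $Y$ to cancel each offending factor in the denominators. The Gaussian matrix $Y = M^* + \mathcal{E}$ has a density on $\mathbb{R}^{m\times n}$ that is everywhere positive with sub-Gaussian decay; combined with the standard SVD Jacobian, this produces an explicit form for the joint density of the ordered singular values on $\{\sigma_1 \geq \cdots \geq \sigma_n \geq 0\}$ that vanishes linearly along every hyperplane $\sigma_i = \sigma_j$.

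First, I would write out the joint density of the singular values. Decomposing $Y = U \diag(s) V'$ with $U$ on the Stiefel manifold $V_{n,m}$, $V \in O(n)$, the Lebesgue measure on $\mathbb{R}^{m \times n}$ factors as
\begin{equation*}
dY \;=\; c_{m,n} \prod_{i=1}^n s_i^{\,m-n} \prod_{1 \leq i < j \leq n} (s_i^2 - s_j^2) \; ds \, dU \, dV,
\end{equation*}
a standard Jacobian formula. Integrating the Gaussian density of $Y$ against the Haar measures on the $U$ and $V$ factors therefore gives the joint density of the ordered singular values in the form
\begin{equation*}
f_\sigma(s) \;=\; h(s) \cdot \prod_{i=1}^n s_i^{\,m-n} \prod_{1 \leq i < j \leq n} (s_i^2 - s_j^2), \qquad s_1 \geq \cdots \geq s_n \geq 0,
\end{equation*}
where $h(s) = c_{m,n}(2\pi\tau^2)^{-mn/2} \int_{U,V} \exp(-\|U\diag(s)V' - M^*\|_F^2/(2\tau^2)) \, dU \, dV$.

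Next, I would establish a sub-Gaussian tail bound on $h$. Expanding $\|U\diag(s)V' - M^*\|_F^2 = \|s\|_2^2 - 2\langle U\diag(s) V', M^*\rangle + \|M^*\|_F^2$ and invoking von Neumann's trace inequality $\langle U\diag(s)V', M^*\rangle \leq \sum_k s_k \sigma_k(M^*) \leq \|s\|_2 \|M^*\|_F$, one obtains
\begin{equation*}
h(s) \;\leq\; C \exp\!\Bigl(-\tfrac{1}{2\tau^2}\|s\|_2^2 + \tfrac{1}{\tau^2}\|s\|_2 \, \|M^*\|_F\Bigr),
\end{equation*}
uniformly in $s$ on the support.

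Finally, I would cancel the singular factor in each expectation against the Vandermonde product and conclude by integrability. The key algebraic identity is $s_i^2 - s_j^2 = (s_i - s_j)(s_i + s_j)$, so for each fixed $1 \leq i < j \leq n$, the factor $(s_i - s_j)^{-1}$ cancels exactly one factor of the Vandermonde in $f_\sigma$, leaving
\begin{equation*}
\frac{s_i \, f_\sigma(s)}{s_i - s_j} \;=\; s_i(s_i + s_j)\, h(s) \prod_k s_k^{\,m-n}\!\!\!\!\prod_{\substack{a<b \\ (a,b)\neq(i,j)}} (s_a^2 - s_b^2),
\end{equation*}
which is a polynomial of bounded degree in $s$ times $h(s)$, hence Lebesgue integrable on the positive orthant by the Gaussian tail bound above. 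This gives claim (1); claim (2) follows identically without the prefactor $s_i$, and claim (3) follows by cancelling $(s_i^2 - s_j^2)$ directly against the corresponding Vandermonde factor. The main obstacle is the bookkeeping in Step 1 (in particular, confirming the Jacobian formula applies on the closed half-space where singular values may coincide or vanish), but the essential conceptual point---that eigenvalue repulsion in the density absorbs the reciprocal singularity---is what drives the result.
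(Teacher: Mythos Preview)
Your proposal is correct and rests on the same core mechanism as the paper's proof: the Vandermonde factor $\prod_{a<b}(\sigma_a^2-\sigma_b^2)$ in the joint density of the singular values cancels the offending $(\sigma_i-\sigma_j)^{-1}$ or $(\sigma_i^2-\sigma_j^2)^{-1}$, leaving a polynomial times a Gaussian-type tail, which is integrable.

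The packaging differs in two minor but worthwhile ways. First, the paper works with the eigenvalues $\lambda_i=\sigma_i^2$ of the central Wishart matrix $Y'Y$ (quoting the classical density from Muirhead) and does the cancellation there, whereas you work directly with the singular values of $Y$ via the SVD Jacobian; these are of course equivalent after the change of variables $\lambda_i=s_i^2$. Second, and more substantively, the paper handles $M^*\neq 0$ by a separate reduction: it first proves the result for $M^*=0$, then for general $M^*$ bounds the density $\exp(-\|\mathcal{E}-M^*\|_F^2/(2\tau^2))$ by a constant times $\exp(-\|\mathcal{E}\|_F^2/(4\tau^2))$ to reduce to the centered case. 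Your route instead treats general $M^*$ directly by bounding the angular integral $h(s)$ via von Neumann's trace inequality, which avoids the case split and yields a uniform sub-Gaussian envelope in one step. Both approaches lead to the same conclusion; yours is slightly more streamlined, while the paper's has the advantage of invoking a textbook density formula without any Stiefel-manifold bookkeeping.
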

\begin{proof}
Firstly, we show the results hold when $M^{\ast}=\B 0$. Let $\lambda_i=\sigma_i^2~(1 \leq i \leq n)$, then $\lambda_1\geq \ldots \geq \lambda_n$ are the eigenvalues of $Y'Y$. The joint distribution $f(\lambda_1,\ldots, \lambda_n)$ of $(\lambda_1, \ldots, \lambda_n)$, i.e., the eigenvalues of a real-valued central Wishart matrix, is known to be \citep{muirhead2009aspects}:
\begin{eqnarray*}
f(\lambda_1,\ldots, \lambda_n)\propto \prod_{a=1}^n\mbox{exp}\left(-\frac{\lambda_a}{2\tau}\right)\cdot \prod_{a=1}^n\lambda_a^{(m-n-1)/2}\cdot \prod_{a<b}(\lambda_a-\lambda_b)
\end{eqnarray*}
Hence, we have
\begin{eqnarray*}
&&\hspace{-0.9cm} \mathbb{E}\frac{\sigma_i}{\sigma_i-\sigma_j} \propto  \idotsint \displaylimits_{\lambda_1\geq \ldots \geq \lambda_n}\frac{\sqrt{\lambda_i}}{\sqrt{\lambda_i}-\sqrt{\lambda_j}}\cdot \prod_{a=1}^n\mbox{exp}\left(-\frac{\lambda_a}{2\tau}\right)\cdot \prod_{a=1}^n\lambda_a^{(m-n-1)/2}\cdot \prod_{a<b}(\lambda_a-\lambda_b)d\B \lambda \\
&\leq& \idotsint \displaylimits_{\lambda_1\geq \ldots \geq \lambda_n}2\lambda_i \prod_{a=1}^n\mbox{exp}\left(-\frac{\lambda_a}{2\tau}\right)\cdot \prod_{a=1}^n\lambda_a^{(m-n-1)/2}\cdot \prod_{\substack{a<b \\ (a,b)\neq (i,j)}}(\lambda_a-\lambda_b)d\B \lambda \\
&\leq& 2\prod_{a=1}^n \int_{0}^{+\infty} \mbox{exp}\left(-\frac{\lambda_a}{2\tau}\right)\cdot \lambda_a^{(m-n-1)/2 + n-a}d\lambda_a <\infty
\end{eqnarray*}
Similarly, we can show $\mathbb{E}1/(\sigma_i-\sigma_j)<\infty$. Moreover, $\mathbb{E}\sigma^2_i/(\sigma_i^2-\sigma_j^2)\leq \mathbb{E}\sigma_i/(\sigma_i-\sigma_j)<\infty$. When $M^{\ast}\neq \B 0$, we express $\sigma_i/(\sigma_i-\sigma_j)$ as a function of $M^{\ast}+\mathcal{E}$, denoted by $h(M^{\ast}+\mathcal{E})$. Then,
\begin{align*}
\mathbb{E}\frac{\sigma_i}{\sigma_i-\sigma_j}&=\frac{1}{(2\pi)^{mn/2}\tau^{mn}}\int h(M^{\ast}+\mathcal{E})\mbox{exp}\left(-\frac{1}{2\tau^2}\|\mathcal{E}\|^2_F \right)d\mathcal{E} \\
&= \frac{1}{(2\pi)^{mn/2}\tau^{mn}}\int h(\mathcal{E})\mbox{exp}\left(-\frac{1}{2\tau^2}\|\mathcal{E}-M^{\ast}\|^2_F \right)d\mathcal{E}  \\
&\hspace{-0.6cm} \leq  \frac{1}{(2\pi)^{mn/2}\tau^{mn}}\cdot \mbox{exp}(\|M^{\ast}\|^2_F/(2\tau^2)) \int h(\mathcal{E})\mbox{exp}\left(-\frac{1}{4\tau^2}\|\mathcal{E}\|^2_F \right)d\mathcal{E} \overset{(a)}{<}\infty, 
\end{align*}
where $(a)$ is implied by the results when $M^{\ast}=0$. Similar arguments work for the other two expectations. 
\end{proof}

\subsection{Proof of Corollary \ref{cor2}}
\label{proof:cor2}

According to Proposition 3 in \citet{mazumder2018matrix}, $S_{\theta}(Y)$ is Lipschitz continuous, which is sufficient for the regularity conditions to hold (see Lemma 3.2 in \citet{CandesEtal2013}). Since $s_{\theta}(\cdot)$ is Lipschitz, it is differentiable almost everywhere. Under the model \eqref{model}, the singular values of $Y$ have a multiplicity of one and are non-zero with probability one. It means that we only need to compute $\nabla \cdot S_{\theta}(Y)$ for the matrix $Y$ of full rank with singular values $\sigma_1> \ldots > \sigma_n >0$ at which $s_{\theta}(\cdot)$ is differentiable. A direct application of Corollary \ref{corcor1} gives the formula in \eqref{dfformula}.

\subsection{Proof of Corollary \ref{cor3add}} \label{pf:l0}

Denote the spectral regularized estimator in expression \eqref{estimator} with $P_{\theta}(\cdot)$ being MC+ penalty functions by $S_{\sqrt{2\theta}, \gamma}(Y)$. Specifically, $S_{\sqrt{2\theta}, \gamma}(Y)=\sum_{i=1}^ng_{\sqrt{2\theta},\gamma}(\sigma_i)\B u_i \B v_i'$, where $g_{\sqrt{2\theta},\gamma}(\cdot)$ is a piecewise linear function defined on $[0, +\infty)$ :
\begin{eqnarray*}
g_{\sqrt{2\theta},\gamma}(\sigma)=
\begin{cases}
0 & \text{if } \sigma \leq \sqrt{2\theta} \\
\frac{\gamma(\sigma -\sqrt{2\theta})}{\gamma-1} &\text{if } \sqrt{2\theta}< \sigma \leq \sqrt{2\theta}\gamma \\
\sigma & \text{if } \sigma >  \sqrt{2\theta}\gamma
\end{cases}
\end{eqnarray*}
Then it is easy to see that $S_{\sqrt{2\theta}, \gamma}(Y) \rightarrow S_{\theta}(Y), $ as $\gamma \downarrow 1$. Hence we have,
\begin{eqnarray*}
|df(S_{\theta}(Y))-df(S_{\sqrt{2\theta},\gamma}(Y))|&=&\frac{1}{\tau^2}\Big |\sum_{ij}\mathbb{E}((S_{\theta}(Y))_{ij}-(S_{\sqrt{2\theta},\gamma}(Y))_{ij})\epsilon_{ij}\Big | \\
&\leq & \frac{1}{\tau^2} \mathbb{E} \|S_{\theta}(Y)-S_{\sqrt{2\theta},\gamma}(Y)\|_F\cdot \|\mathcal{E}\|_F \\
&\leq& \frac{1}{\tau^2} \mathbb{E}^{1/2} \|S_{\theta}(Y)-S_{\sqrt{2\theta},\gamma}(Y)\|^2_F \cdot \mathbb{E}^{1/2} \|\mathcal{E}\|^2_F  \\
&\rightarrow & 0 \quad  \mbox{~as~} \gamma \downarrow 1,
\end{eqnarray*}
where the last line holds by using Dominated Convergence Theorem (DCT). We can apply DCT here because $\|S_{\theta}(Y)-S_{\sqrt{2\theta},\gamma}(Y)\|^2_F \leq 4\|S_{\theta}(Y)\|^2_F$. Therefore, we can calculate $df(S_{\theta}(Y))$ via the following limiting argument,
\begin{eqnarray*}
df(S_{\theta}(Y))=\lim_{\gamma \downarrow 1}df(S_{\sqrt{2\theta},\gamma}(Y)).
\end{eqnarray*}
When $\gamma >1$, $S_{\sqrt{2\theta},\gamma}(Y)$ satisfies the conditions in Corollary \ref{cor2}. Hence, we can get
\begin{eqnarray*}
df(S_{\sqrt{2\theta},\gamma}(Y))&=&  \sum_{i=1}^n \left ( \frac{\gamma}{\gamma-1}P(\sqrt{2\theta}<\sigma_i\leq \sqrt{2\theta}\gamma)+P(\sigma_i >  \sqrt{2\theta} \gamma) \right )   \\
&& + \mathbb{E}\Bigg[ \sum_{\substack {i \neq j\\ i, j=1}}^n\frac{\sigma_i g_{\sqrt{2\theta},\gamma}(\sigma_i)-\sigma_jg_{\sqrt{2\theta},\gamma}(\sigma_j)}{\sigma^2_i-\sigma^2_j}\Bigg ]
\end{eqnarray*}
Now we calculate the limit of each term in the above equation. Let $F_{\sigma_i}(\cdot), f_{\sigma_i}(\cdot)$ be the cdf, pdf of $\sigma_i$ respectively, and $f_{\sigma_i,\sigma_j}(\cdot,\cdot)$ the joint pdf of $(\sigma_i,\sigma_j)$. It is straightforward to see $\lim_{\gamma \downarrow 1}P(\sigma_i >  \sqrt{2\theta} \gamma) =P(\sigma_i>\sqrt{2\theta})$, and 
\begin{align*}
\lim_{\gamma \downarrow 1} \frac{\gamma}{\gamma-1}P(\sqrt{2\theta}<\sigma_i\leq \sqrt{2\theta}\gamma)&=\lim_{\gamma \downarrow 1}\sqrt{2\theta}\gamma \cdot \lim_{\gamma \downarrow 1} \frac{F_{\sigma_i}(\sqrt{2\theta}\gamma)-F_{\sigma_i}(\sqrt{2\theta})}{\sqrt{2\theta}(\gamma-1)} \\
&=\sqrt{2\theta}f_{\sigma_i}(\sqrt{2\theta}).
\end{align*}
Finally, we break $\mathbb{E}\left (\frac{\sigma_i g_{\sqrt{2\theta},\gamma}(\sigma_i)-\sigma_jg_{\sqrt{2\theta},\gamma}(\sigma_j)}{\sigma^2_i-\sigma^2_j}\right )$ into 8 terms,
\begin{eqnarray*}
I_1&=&\mathbb{E} \mathbbm{1}(\sigma_i\leq \sqrt{2\theta}, \sqrt{2\theta} < \sigma_j \leq \sqrt{2\theta}\gamma)\cdot \frac{\gamma \sigma_j(\sigma_j-\sqrt{2\theta})}{(\gamma-1)(\sigma_j^2-\sigma_i^2)}  \\
I_2&=&\mathbb{E} \mathbbm{1}(\sigma_j\leq \sqrt{2\theta}, \sqrt{2\theta} < \sigma_i \leq \sqrt{2\theta}\gamma)\cdot \frac{\gamma \sigma_i(\sigma_i-\sqrt{2\theta})}{(\gamma-1)(\sigma_i^2-\sigma_j^2)} \\
I_3&=&\mathbb{E} \mathbbm{1}(\sigma_i\leq \sqrt{2\theta},  \sigma_j > \sqrt{2\theta}\gamma)\cdot \frac{\sigma_j^2}{\sigma_j^2-\sigma_i^2} \\
I_4&=& \mathbb{E} \mathbbm{1}(\sigma_j\leq \sqrt{2\theta},  \sigma_i > \sqrt{2\theta}\gamma)\cdot \frac{\sigma_i^2}{\sigma_i^2-\sigma_j^2} \\
I_5&=&\mathbb{E} \mathbbm{1}(\sqrt{2\theta} < \sigma_i \leq \sqrt{2\theta}\gamma, \sqrt{2\theta} < \sigma_j \leq \sqrt{2\theta}\gamma)\cdot \frac{\gamma}{\gamma-1} \cdot \Big(1-\frac{\sqrt{2\theta}}{\sigma_i+\sigma_j} \Big) \\
I_6 &=& \mathbb{E} \mathbbm{1}(\sqrt{2\theta} < \sigma_i \leq \sqrt{2\theta}\gamma,  \sigma_j > \sqrt{2\theta}\gamma)\cdot  \Big(1+ \frac{1}{\gamma -1} \cdot \frac{\sigma^2_i-\sqrt{2\theta}\gamma \sigma_i}{\sigma_i^2-\sigma_j^2} \Big) \\
I_7&=& \mathbb{E} \mathbbm{1}(\sqrt{2\theta} < \sigma_j \leq \sqrt{2\theta}\gamma,  \sigma_i > \sqrt{2\theta}\gamma)\cdot  \Big(1+ \frac{1}{\gamma -1} \cdot \frac{\sigma^2_j-\sqrt{2\theta}\gamma \sigma_j}{\sigma_j^2-\sigma_i^2} \Big) \\
I_8&=& \mathbb{E} \mathbbm{1}(\sigma_i> \sqrt{2\theta}\gamma,  \sigma_j > \sqrt{2\theta}\gamma)
\end{eqnarray*}
We then analyze them term by term. First, since $\mathbb{E}1/|\sigma_j-\sigma_i| <\infty$ by Lemma \ref{key_key}, 
\begin{eqnarray*}
I_1 \leq \mathbb{E}\mathbbm{1}(\sqrt{2\theta}<\sigma_j \leq \sqrt{2\theta}\gamma)\cdot \frac{\gamma\sqrt{2\theta}}{|\sigma_j-\sigma_i|} \rightarrow 0, \mbox{~as~}\gamma \downarrow 1.
\end{eqnarray*}
Similarly, we have $\lim_{\gamma \downarrow 1}I_2=0$. Because $\mathbb{E}\sigma^2_j/|\sigma^2_j-\sigma^2_i|<\infty$ according to Lemma \ref{key_key}, we have $\lim_{\gamma \downarrow 1}I_3 =\mathbb{E}\mathbbm{1}(\sigma_i <\sqrt{2\theta}, \sigma_j >\sqrt{2\theta}) \cdot \sigma^2_j /(\sigma_j^2-\sigma_i^2), \lim_{\gamma \downarrow 1}I_4 =\mathbb{E}\mathbbm{1}(\sigma_j <\sqrt{2\theta}, \sigma_i >\sqrt{2\theta}) \cdot \sigma^2_i /(\sigma_i^2-\sigma_j^2)$. Moreover, 
\begin{eqnarray*}
I_5 \leq \frac{\gamma}{\gamma-1}P(\sqrt{2\theta} < \sigma_i \leq \sqrt{2\theta}\gamma, \sqrt{2\theta} < \sigma_j \leq \sqrt{2\theta}\gamma) \rightarrow 0,
\end{eqnarray*}
since $P(\sqrt{2\theta} < \sigma_i \leq \sqrt{2\theta}\gamma, \sqrt{2\theta} < \sigma_j \leq \sqrt{2\theta}\gamma) \sim (\gamma-1)^{2}$. Also,
\begin{eqnarray*}
I_6 \leq P(\sqrt{2\theta}< \sigma_i \leq \sqrt{2\theta} \gamma) + \mathbb{E}\mathbbm{1}(\sqrt{2\theta}< \sigma_i \leq \sqrt{2\theta} \gamma) \cdot \frac{\sqrt{2\theta}}{|\sigma_j-\sigma_i|} \rightarrow 0.
\end{eqnarray*}
Similarly, $\lim_{\gamma \downarrow 0}I_7=0$. Clearly, $\lim_{\gamma \downarrow 1} I_8=P(\sigma_i>\sqrt{2\theta},\sigma_j>\sqrt{2\theta})$. Collecting all the terms we analyzed so far leads to the $\emph{df}$ expression of rank regularized estimator.

\subsection{Proof of Corollary \ref{cor3done}} \label{pf:lq}

According to Lemmas 5--7 in \cite{zheng2017lp}, we can decompose the function $\eta_q(\sigma;\theta)$ over $[0, \infty)$,
\begin{eqnarray*}
\eta_q(\sigma;\theta)=\zeta_q(\sigma;\theta)+\xi_q(\sigma;\theta),
\end{eqnarray*}
where $\zeta_q(\sigma;\theta)=[2(1-q)\theta]^{1/(2-q)}\cdot \mathbbm{1}(\sigma>c_q\theta^{1/(2-q)})$, and $\xi_q(\sigma;\theta)$ is a Lipschitz continuous function. If we define 
\[
\widetilde{S}_{\theta}(Y)=\sum_{i=1}^n\zeta_q(\sigma_i;\theta)\B{u}_i\B{v}'_i, \bar{S}_{\theta}(Y)=\sum_{i=1}^n\xi_q(\sigma_i;\theta)\B{u}_i\B{v}'_i, 
\]
by the definition of \df in \eqref{original} we have
$$df(S_{\theta}(Y))=df(\widetilde{S}_{\theta}(Y))+df(\bar{S}_{\theta}(Y)).$$
Due to the Lipschitz continuity of $\xi_q(\sigma;\theta)$, we can use the same arguments as presented in the proof of Lemma \ref{loc:lip} to conclude that $\bar{S}_{\theta}(Y)$ is Lipschitz continuous. Hence the formula \eqref{alter} is applicable to $\bar{S}_{\theta}(Y)$. Its \df can be computed by the divergence formula in Corollary \ref{corcor1}. Regarding the \df of $\widetilde{S}_{\theta}(Y)$, similar to what we did in the proof of Corollary \ref{cor3add}, we construct a sequence of approximations: $\widetilde{S}_{\theta, h}(Y)=\sum_{i=1}^n g_{\theta,h}(\sigma_i)\B{u}_i\B{v}'_i$, where $g_{\theta,h}$ is a piecewise linear function,
\begin{eqnarray*}
g_{\theta,h}(\sigma)=
\begin{cases}
0 & \text{if } 0\leq \sigma < c_q\theta^{1/(2-q)} \\
\frac{[2(1-q)\theta]^{1/(2-q)}}{h} (\sigma-c_q\theta^{1/(2-q)} ) & \text{if } c_q\theta^{1/(2-q)}\leq \sigma \leq c_q\theta^{1/(2-q)}+h \\
[2(1-q)\theta]^{1/(2-q)} & \text{if } \sigma > c_q\theta^{1/(2-q)}+h
\end{cases}
\end{eqnarray*}
Because $\widetilde{S}_{\theta,h}(Y)$ is Lipschitz, we can compute $df(\widetilde{S}_{\theta,h}(Y))$ with the divergence formula in Corollary \ref{corcor1} and obtain $df(\widetilde{S}_{\theta}(Y))$ by letting $h\downarrow 0$. Since the calculations are very similar to the ones in the proof of Corollary \ref{cor3add}, we do not repeat here. Adding up the \df formulas of $\widetilde{S}_{\theta}(Y)$ and $\bar{S}_{\theta}(Y)$ finishes the proof.

\subsection{Proof of Corollary \ref{cor3}} \label{pf:rank}
 We consider the non-trivial case when $K<n$. The case $K=n$ can be directly verified. Before we go to the the main proof, we prove two useful lemmas that will be applied in the proof. 
\begin{lemma}\label{loc:lip}
For any given $Y_1, Y_2 \in \mathbb{R}^{m \times n}$, denote 
\[
\mathcal{L} \triangleq \max \Big(\frac{\sigma_K(Y_1)}{\sigma_K(Y_1)-\sigma_{K+1}(Y_1)}, \frac{\sigma_K(Y_2)}{\sigma_K(Y_2)-\sigma_{K+1}(Y_2)} \Big). 
\]
We then have
\begin{eqnarray}
\|C_K(Y_1)-C_K(Y_2)\|_F \leq \mathcal{L}\cdot \|Y_1-Y_2\|_F . \label{lip}
\end{eqnarray}
\end{lemma}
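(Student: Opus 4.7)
The plan is to exploit the representation $C_K(Y) = P_L(Y)\,Y$, where $P_L(Y)$ is the orthogonal projection onto the span of the top-$K$ left singular vectors of $Y$. This projection is uniquely determined and depends smoothly on $Y$ whenever $\sigma_K(Y) > \sigma_{K+1}(Y)$. If the gap vanishes at either endpoint, then $\mathcal{L} = \infty$ and \eqref{lip} is vacuous, so I may assume strict gaps at both $Y_1$ and $Y_2$.

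Writing $P_i = P_L(Y_i)$, $Q_i = I - P_i$, and $R_K(Y) = Y - C_K(Y)$, I would start from the identity $C_K(Y_1) - C_K(Y_2) = P_1 Y_1 - C_K(Y_2)$ and split along the orthogonal decomposition $P_1 + Q_1 = I$. Using $P_1 R_K(Y_1) = 0$ and $Q_1 C_K(Y_1) = 0$, this yields the Pythagorean decomposition
\begin{equation*}
\|C_K(Y_1) - C_K(Y_2)\|_F^2 = \bigl\|P_1(Y_1-Y_2) + P_1 R_K(Y_2)\bigr\|_F^2 + \|Q_1 C_K(Y_2)\|_F^2.
\end{equation*}
The two cross-terms can be rewritten entirely in terms of the left-subspace perturbation: since $P_2 R_K(Y_2) = 0$ we have $P_1 R_K(Y_2) = (P_1 - P_2)\,R_K(Y_2)$, and since $P_2 C_K(Y_2) = C_K(Y_2)$ we have $Q_1 C_K(Y_2) = (P_2 - P_1)\,C_K(Y_2)$.

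The heart of the argument is then to bound $\|(P_2-P_1)\,C_K(Y_2)\|_F$ and $\|(P_1-P_2)\,R_K(Y_2)\|_F$ by an appropriate multiple of $\|Y_1 - Y_2\|_F$. I would invoke a Wedin-type $\sin\Theta$ calculation cast in block form: partition $Y_1 - Y_2$ according to the $(P_2,Q_2)$ decomposition and solve the relevant Sylvester-like equation for $P_1 - P_2$, which extracts factors of $\sigma_K(Y_2)$ and $\sigma_{K+1}(Y_2)$ against the gap $\sigma_K(Y_2) - \sigma_{K+1}(Y_2)$, giving the bound $\sigma_K(Y_2)/(\sigma_K(Y_2)-\sigma_{K+1}(Y_2))\cdot\|Y_1-Y_2\|_F$. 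A symmetric decomposition starting from $C_K(Y_1) - C_K(Y_2) = C_K(Y_1) - P_2 Y_2$ gives the same inequality with the roles of $Y_1$ and $Y_2$ swapped, and taking the better of the two yields the constant $\mathcal{L}$ that involves only endpoint spectral gaps.

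The main obstacle will be to pin down the constant $\sigma_K(Y_i)/(\sigma_K(Y_i)-\sigma_{K+1}(Y_i))$ on the nose, rather than a looser Wedin-style bound that would depend on a cross-gap such as $\sigma_K(Y_1)-\sigma_{K+1}(Y_2)$ (which need not even be positive). The trick is to keep every singular value in the estimate attached to a single matrix $Y_i$, which requires routing the off-diagonal blocks of $Y_1-Y_2$ through the singular-value equation of $Y_i$ alone, rather than through a cross SVD. A secondary bookkeeping point is to check that the first summand $\|P_1(Y_1-Y_2) + P_1 R_K(Y_2)\|_F$, after substituting $P_1 R_K(Y_2) = (P_1-P_2)R_K(Y_2)$, combines with the second summand to give a clean $\mathcal{L}^2\|Y_1-Y_2\|_F^2$ upper bound rather than a bound weakened by the algebraic cross-interaction between the two pieces.
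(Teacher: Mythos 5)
Your outline does not yet amount to a proof: the two steps you yourself flag as ``the main obstacle'' and a ``secondary bookkeeping point'' are exactly where the mathematical content lies, and neither is carried out. First, the Wedin/$\sin\Theta$ step you need --- a bound on $\|(P_1-P_2)C_K(Y_2)\|_F$ and $\|(P_1-P_2)R_K(Y_2)\|_F$ in terms of $\|Y_1-Y_2\|_F$ with a gap attached to a \emph{single} matrix and with no extra multiplicative constant --- is not a standard result. The usual Wedin bounds involve cross-gaps such as $\sigma_K(Y_1)-\sigma_{K+1}(Y_2)$ (which, as you note, can be nonpositive), and the known one-matrix-gap variants (Davis--Kahan/Wedin \`a la Yu--Wang--Samworth) carry factors like $2$ or $\sqrt{2}$. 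There is no slack to absorb such factors: the constant $\mathcal{L}$ is attained with equality (the example in Section \ref{verifying:regularity:con} of the paper, $Y_1=U\diag(\B\sigma)V'$, $Y_2=U\diag(\tilde{\B\sigma})V'$ with swapped blocks of singular values $a$ and $b$, gives ratio exactly $a/(a-b)=\mathcal{L}$). Second, even granting sharp bounds on each of the three pieces $P_1(Y_1-Y_2)$, $(P_1-P_2)R_K(Y_2)$ and $(P_2-P_1)C_K(Y_2)$, recombining them via triangle/Cauchy--Schwarz inequalities generically loses constants; in the tight example all three pieces are simultaneously extremal and the triangle inequality happens to be tight, so your decomposition must be controlled jointly, not term by term. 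Until the Sylvester-equation computation you allude to is actually exhibited and shown to produce $\sigma_K(Y_i)/(\sigma_K(Y_i)-\sigma_{K+1}(Y_i))$ on the nose together with a lossless aggregation of the three terms, the argument has a genuine gap.

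For comparison, the paper's proof sidesteps subspace perturbation entirely. It expands $\mathcal{L}^2\|Y_1-Y_2\|_F^2-\|C_K(Y_1)-C_K(Y_2)\|_F^2$ and bounds the three cross trace terms by von Neumann's trace inequality applied to $\mathcal{L}Y_i-C_K(Y_i)$ and $C_K(Y_i)$; the definition of $\mathcal{L}$ is used exactly once, to guarantee that the numbers $\mathcal{L}\sigma_i(Y_j)-f_j(\sigma_i(Y_j))$ are in descending order so that the trace inequality pairs singular values with matching indices. The problem then collapses to the per-index scalar inequality $\left(f_1(\sigma_i(Y_1))-f_2(\sigma_i(Y_2))\right)^2\leq \mathcal{L}^2\left(\sigma_i(Y_1)-\sigma_i(Y_2)\right)^2$, which is immediate. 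If you want to salvage your projection route, you would need to reproduce this kind of exact spectral alignment inside the $\sin\Theta$ calculation, which is substantially harder than the trace-inequality argument.
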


\begin{proof}
Let $f_1(\sigma)=\sigma \mathbbm{1}(\sigma\geq \sigma_K(Y_1))$ and $f_2(\sigma)=\sigma \mathbbm{1}(\sigma\geq \sigma_K(Y_2))$. Then
\begin{eqnarray*}
&& \mathcal{L}^2\|Y_1-Y_2\|^2_F-\|C_K(Y_1)-C_K(Y_2)\|^2_F \\
&=&\sum_i [\mathcal{L}^2(\sigma^2_i(Y_1)+\sigma^2_i(Y_2)) -f^2_1(\sigma_i(Y_1))-f^2_2(\sigma_i(Y_2))] \\
&&-2\mathcal{L}^2\mbox{tr}(Y_1'Y_2)+2\mbox{tr}(C'_K(Y_1)C_K(Y_2)) \\
&=&\sum_i [\mathcal{L}^2(\sigma^2_i(Y_1)+\sigma^2_i(Y_2)) -f^2_1(\sigma_i(Y_1))-f^2_2(\sigma_i(Y_2))] \\
&& -2\mbox{tr}[(\mathcal{L}Y_1-C_K(Y_1))'(\mathcal{L}Y_2-C_K(Y_2))] \\
&&-2\mbox{tr}[(\mathcal{L} Y_1-C_K(Y_1))'C_K(Y_2)]-2\mbox{tr}[C_K(Y_1)'(\mathcal{L} Y_2-C_K(Y_2))] \\
&\overset{(a)}{\geq}&\sum_i [\mathcal{L}^2(\sigma^2_i(Y_1)+\sigma^2_i(Y_2)) -f^2_1(\sigma_i(Y_1))-f^2_2(\sigma_i(Y_2))]\\
&&-2\sum_i [\mathcal{L} \sigma_i(Y_1)-f_1(\sigma_i(Y_1))]\cdot [ \mathcal{L} \sigma_i(Y_2)-f_2(\sigma_i(Y_2))] \\
&&-2\sum_i [\mathcal{L} \sigma_i(Y_1)-f_1(\sigma_i(Y_1))]\cdot f_2(\sigma_i(Y_2)) \\
&& -2\sum_i f_1(\sigma_i(Y_1))\cdot [ \mathcal{L} \sigma_i(Y_2)-f_2(\sigma_i(Y_2))]  \\
&\geq& \sum_i [\mathcal{L}^2(\sigma_i(Y_1)-\sigma_i(Y_2))^2-(f_1(\sigma_i(Y_1))-f_2(\sigma_i(Y_2)))^2] \\
&\geq& 0 ,
\end{eqnarray*}
where $(a)$ holds by applying von Newmann's trace inequality \citep{von1937some}. Note that by the way we define $\mathcal{L}$, the sequence $\{\mathcal{L}\sigma_i(Y_1)-f_1(\sigma_i(Y_1))\}_{i=1}^n$ preserves the descending order, so does $\{ \mathcal{L} \sigma_i(Y_2)-f_2(\sigma_i(Y_2))\}_{i=1}^n$. This is the key to derive Inequality $(a)$. 
\end{proof}
\begin{lemma}\label{divergence:cmp}
Given any $Y \in \mathbb{R}^{m\times n}$, if $\sigma_K(Y)>\sigma_{K+1}(Y)$, then $C_K(Y)$ is directionally differentiable at $Y$ and 
\begin{eqnarray}\label{lem5:one}
\mathbb{E}_Z \left( \lim_{h \rightarrow 0+}\frac{([C_K(hZ+Y)]_{ij}-[C_K(Y)]_{ij})Z_{ij}}{h} \right)=\frac{\partial [C_K(Y)]_{ij}}{\partial Y_{ij}},
\end{eqnarray}
where the entries of $Z$ follow i.i.d $N(0,1)$. Moreover, 
\begin{eqnarray}\label{lem5:two}
\sum_{ij}\frac{\partial [C_K(Y)]_{ij}}{\partial Y_{ij}}=(m+n-K)K+2\sum_{i=1}^K\sum_{K+1}^n\frac{\sigma^2_j}{\sigma_i^2-\sigma_j^2}.
\end{eqnarray}
\end{lemma}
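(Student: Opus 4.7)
The plan is to localize. Since $\sigma_K(Y) > \sigma_{K+1}(Y)$ strictly, I fix a separator $\tau \in (\sigma_{K+1}(Y), \sigma_K(Y))$. By continuity of singular values there is a ball $B(Y,\delta)$ on which $\sigma_K(Y') > \tau > \sigma_{K+1}(Y')$, so that on this ball $C_K(Y')$ coincides with the spectral function $S(Y';f_\tau)$, where $f_\tau(\sigma):=\sigma\,\mathbbm{1}(\sigma>\tau)$. Because the spectrum of every $Y'$ in a slightly smaller ball avoids $\tau$, I may further replace $f_\tau$ by a $C^\infty$ surrogate $\tilde f$ that agrees with $f_\tau$ on a neighborhood of each such singular value; then $C_K\equiv S(\cdot;\tilde f)$ in that smaller ball, and the reduced rank estimator inherits whatever smoothness the classical spectral-function machinery grants $S(\cdot;\tilde f)$.

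Next, I would upgrade this to classical (Fr\'echet) differentiability of $C_K$ at $Y$. Applying Lemma~\ref{shap} (through the symmetrization in Lemma~\ref{symmetry}) with $\tilde f$ differentiable at every distinct singular value of $Y$, the directional-derivative formula is linear in the perturbation direction $H$, since its second summand reduces to $\sum_k \tilde f'(\lambda_k)E_kE_k'HE_kE_k'$. Combined with the local Lipschitz bound of Lemma~\ref{loc:lip} (whose constant $\mathcal{L}$ stays bounded on a neighborhood of $Y$ because $\sigma_K(\cdot)-\sigma_{K+1}(\cdot)$ is continuous and strictly positive there), linearity of the G\^ateaux derivative upgrades to Fr\'echet differentiability. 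Consequently, for every $Z$,
\[
\lim_{h\downarrow 0}\frac{[C_K(Y+hZ)]_{ij}-[C_K(Y)]_{ij}}{h}\;=\;\sum_{k,l}\frac{\partial [C_K(Y)]_{ij}}{\partial Y_{kl}}\,Z_{kl}.
\]
Multiplying by $Z_{ij}$ and passing the finite sum through $\mathbb{E}_Z$ (legitimate since the integrand is a degree-two polynomial in Gaussian variables), the identity $\mathbb{E}(Z_{kl}Z_{ij})=\mathbbm{1}(k=i,\,l=j)$ immediately yields \eqref{lem5:one}.

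To establish \eqref{lem5:two}, I would invoke Corollary~\ref{corcor1} directly with $f=f_\tau$ at $Y$, which is legitimate since $f_\tau(0)=0$ and $f_\tau$ is differentiable at every distinct singular value of $Y$ (all of which avoid $\tau$). Let $A$ denote the set of distinct singular values exceeding $\tau$ (with multiplicities summing to $K$) and $B$ its complement (multiplicities summing to $n-K$); on $A$ one has $f_\tau=\mathrm{id}$ and $f_\tau'=1$, while on $B$ one has $f_\tau\equiv 0$ and $f_\tau'\equiv 0$, so the $f'(0)\mathbbm{1}(s_K=0)$ term vanishes. The first sum in Corollary~\ref{corcor1} then contributes $\sum_{l\in A}(d_l^2+(m-n)d_l)$; the cross-sum splits into in-$A$ ordered pairs yielding $K^2-\sum_{l\in A}d_l^2$ and mixed $A$-to-$B$ pairs yielding $2\sum_{l\in A,\,l'\in B}d_l d_{l'}\,s_l^2/(s_l^2-s_{l'}^2)$, while $B$-$B$ pairs vanish. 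Unpacking the multiplicities as ordinary singular-value sums and using $\sigma_i^2/(\sigma_i^2-\sigma_j^2)=1+\sigma_j^2/(\sigma_i^2-\sigma_j^2)$ collapses everything to $(m+n-K)K+2\sum_{i=1}^K\sum_{j=K+1}^n \sigma_j^2/(\sigma_i^2-\sigma_j^2)$, which is exactly \eqref{lem5:two}.

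The hard part is the step from directional to classical differentiability. Shapiro's theorem alone yields only a directional derivative, and $C_K$ genuinely fails to be weakly differentiable across the locus $\{\sigma_K=\sigma_{K+1}\}$ (as stressed in Section~\ref{verifying:regularity:con}), so the separation hypothesis must do real work. The cleanest route is the smooth-surrogate argument above, which reduces the question to the classical fact that smooth generators produce smooth spectral functions; Lemma~\ref{loc:lip} then supplies the Lipschitz estimate needed both to dispel directional-only pathologies and to justify moving the finite sum through the Gaussian expectation in \eqref{lem5:one}.
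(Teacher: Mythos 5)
Your proposal is correct and takes essentially the same route as the paper's proof: you localize $C_K$ as a spectral function $S(\cdot\,;f)$ whose generator is differentiable at every distinct singular value of $Y$ (the paper uses a piecewise-linear ramp $v$ where you use the cutoff $f_\tau$ and a smooth surrogate), apply Corollary \ref{corcor1} to that generator to get \eqref{lem5:two}, and obtain \eqref{lem5:one} from linearity in $Z$ of the Shapiro/symmetrization directional derivative together with $\mathbb{E}(Z_{kl}Z_{ij})=\mathbbm{1}(k=i,\,l=j)$. The only departure is your upgrade from G\^ateaux to Fr\'echet differentiability via the Lipschitz bound of Lemma \ref{loc:lip}, which is valid in finite dimensions but not needed, since linearity of the directional derivative in the direction is already enough for the Gaussian-expectation identity, which is where the paper stops.
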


\begin{proof}
Construct a function $v:\mathbb{R}^+\rightarrow \mathbb{R}^+$ as
\begin{eqnarray*}
v(x)=
\begin{cases}
0 & \text{if } x\leq \sigma_{K+1}(Y)+\Delta, \\
\frac{(\sigma_K(Y)-\Delta)(x-\sigma_{K+1}(Y)-\Delta)}{\sigma_K(Y)-\sigma_{K+1}(Y)-2\Delta}& \text{if }  \sigma_{K+1}(Y)+\Delta<x\leq \sigma_K(Y)-\Delta ,\\
x & \text{otherwise},
\end{cases}
\end{eqnarray*}
where $\Delta$ is a positive constant smaller than $(\sigma_K(Y)-\sigma_{K+1}(Y)) / 2$. It is straightforward to confirm that $v(0)=0$ and $v(\cdot)$ is differentiable at $\sigma_i(Y), 1\leq i \leq n$. Hence applying Theorem 1 gives
\begin{eqnarray*}
\sum_{ij}\frac{\partial [S(Y;w)]_{ij}}{\partial Y_{ij}}=(m+n-K)K+2\sum_{i=1}^K\sum_{K+1}^n\frac{\sigma^2_j}{\sigma_i^2-\sigma_j^2}.
\end{eqnarray*}
Note that since the singular values $\sigma_i(Y)$ are continuous, we know $C_K(\tilde{Y})=S(\tilde{Y};w)$ for $\tilde{Y}$ in a small neighborhood of $Y$. This fact combined with the last equality proves \eqref{lem5:two}. Regarding \eqref{lem5:one}, since $v(\cdot)$ is differentiable at $\sigma_i(Y), 1\leq i \leq n$, we can combine Lemmas 1 and 2 (as we did in the proof of Theorem 1) to conclude that the directional differential $\lim_{h \rightarrow 0+}\frac{([C_K(hZ+Y)]_{ij}-[C_K(Y)]_{ij})}{h}$ is linear in $Z$. Denote it by $D(Y) \in \mathbb{R}^{m\times n}$. Then we have
\begin{eqnarray*}
\mathbb{E}_Z \lim_{h \rightarrow 0+}\frac{([C_K(hZ+Y)]_{ij}-[C_K(Y)]_{ij})Z_{ij}}{h}= \mathbb{E}_Z [{\rm tr}(Z'D(Y))Z_{ij}]=[D(Y)]_{ij}
\end{eqnarray*}
By the definition of $D(Y)$, we already know ($e_{ij}$ below is the matrix with only its $(i,j)$th entry being non-zero and equal $1$)
\[
[D(Y)]_{ij}=\lim_{h \rightarrow 0+}\frac{([C_K(he_{ij}+Y)]_{ij}-[C_K(Y)]_{ij})}{h}=\frac{\partial [C_K(Y)]_{ij}}{\partial Y_{ij}}.
\]
This completes the proof of \eqref{lem5:one}.
\end{proof}

We now consider a smoothed version of $C_K(Y)$, defined below
\[
g_{h}(Y)\triangleq \mathbb{E}_Z[C_K(Y+hZ)],
\]
where the elements of $Z$ are i.i.d from $N(0,1)$, independent of $Y$; the expectation $\mathbb{E}_Z$ is taken only with respect to $Z$; and $h$ is a positive constant. We would like to show that $g_h(Y)$ is a good approximation to $C_K(Y)$, in terms of calculating degrees of freedom, i.e.,
\begin{eqnarray}\label{justify:one}
\lim_{h \rightarrow 0+} df(g_h(Y)) = df(C_K(Y)).
\end{eqnarray}
To prove \eqref{justify:one}, by using the original definition of \emph{df}, it is sufficient to show for $1\leq i \leq m, 1 \leq j \leq n$,
\begin{eqnarray*}
\lim_{h \rightarrow 0+} \mathbb{E}( [g_h(Y)]_{ij}Y_{ij})=\mathbb{E}( [C_K(Y)]_{ij}Y_{ij}), \quad \lim_{h \rightarrow 0+} \mathbb{E}( [g_h(Y)]_{ij})=\mathbb{E}( [C_K(Y)]_{ij}).
\end{eqnarray*}
We now prove the first equality above and the second one follows the same route of proof. First note that
\begin{eqnarray*}
\mathbb{E}( [g_h(Y)]_{ij}Y_{ij})-\mathbb{E}( [C_K(Y)]_{ij}Y_{ij})=\mathbb{E} (([C_K(Y+hZ)]_{ij}-[C_K(Y)]_{ij})Y_{ij})
\end{eqnarray*}
Since $\|C_K(Y)\|_F \leq \|Y\|_F$ for any $Y \in \mathbb{R}^{m\times n}$, we have for small $h$
\[
|([C_K(Y+hZ)]_{ij}-[C_K(Y)]_{ij})Y_{ij}| \leq (\|Z\|_F+2\|Y\|_F)\|Y\|_F.
\]
Hence we can use Dominated Convergence Theorem (DCT) to conclude
\begin{eqnarray*}
&&\lim_{h \rightarrow 0+} \mathbb{E} (([C_K(Y+hZ)]_{ij}-[C_K(Y)]_{ij})Y_{ij}) \\
&&=\mathbb{E} \lim_{h \rightarrow 0+} (([C_K(Y+hZ)]_{ij}-[C_K(Y)]_{ij})Y_{ij})\overset{(b)}{=}0.
\end{eqnarray*}
To derive $(b)$ we have used the fact that $C_K(Y)$ is directionally differentiable from Lemma \ref{divergence:cmp}. Based on \eqref{justify:one}, we can compute $df(C_K(Y))$ by first calculating $df(g_h(Y))$ and then letting $h$ goes to zero. Since $g_h(Y)$ is differentiable, it is straightforward to get
\begin{eqnarray}\label{smooth:df}
df(g_h(Y))&=&  \sum_{ij} \mathbb{E} \Big(\frac{\partial [g_h(Y)]_{ij}}{\partial Y_{ij}} \Big)= \sum_{ij} \mathbb{E}\Big (  \frac{[C_K(hZ+Y)]_{ij} Z_{ij }}{h} \Big)  \nonumber \\
&\overset{(c)}{=}& \sum_{ij} \mathbb{E}\Big (  \frac{([C_K(hZ+Y)]_{ij} -[C_K(Y)]_{ij})Z_{ij }}{h} \Big),
\end{eqnarray}
where $(c)$ holds because $Z$ is independent of $Y$ and has zero mean. We aim to calculate the following limits:
\begin{eqnarray}\label{limits:one}
&& \lim_{h \rightarrow 0+} \mathbb{E}\Big (  \frac{([C_K(hZ+Y)]_{ij} -[C_K(Y)]_{ij})Z_{ij }}{h} \Big)=  \nonumber \\
&& \hspace{-0.9cm} \lim_{h \rightarrow 0+} \mathbb{E}_Z  \underbrace{\int  \frac{([C_K(hZ+Y)]_{ij} -[C_K(Y)]_{ij})Z_{ij }}{h} \frac{1}{(\sqrt{2\pi}\tau)^{mn}}{\rm exp}\Big(\frac{\|Y-M^*\|^2_F}{-2\tau^2} \Big)dY}_{\triangleq J(Z, h)} \nonumber \\
\end{eqnarray}
According to Lemma \ref{loc:lip}, we can obtain
\begin{eqnarray}\label{firstdct:one}
\hspace{-1cm}
 &&|J(Z,h)|  \leq   \|Z \|_F|Z_{ij}| \cdot  \mathbb{E}_Y \frac{\sigma_K(Y)}{\sigma_K(Y)-\sigma_{K+1}(Y)} + \nonumber \\
 && \|Z \|_F|Z_{ij}|  \cdot \mathbb{E}_Y \frac{\sigma_K(hZ+Y)}{\sigma_K(hZ+Y)-\sigma_{K+1}(hZ+Y)}  
\end{eqnarray}
Moreover, a simple change of variable gives us
\begin{align}
& \hspace{0.3cm}\mathbb{E}_Y \frac{\sigma_K(hZ+Y)}{\sigma_K(hZ+Y)-\sigma_{K+1}(hZ+Y)} =  \nonumber \\
& \int \frac{\sigma_K(Y)}{\sigma_K(Y)-\sigma_{K+1}(Y)} \frac{1}{(\sqrt{2\pi}\tau)^{mn}}{\rm exp}\Big(\frac{\|Y-hZ-M^*\|^2_F}{-2\tau^2} \Big)dY  \nonumber \\
&\leq  \frac{1}{(\sqrt{2\pi}\tau)^{mn}}{\rm exp}\Big(\frac{\|hZ+M^*\|^2_F}{2\tau^2}\Big)\int \frac{\sigma_K(Y)}{\sigma_K(Y)-\sigma_{K+1}(Y)}  {\rm exp}\Big(\frac{\|Y\|^2_F}{-4\tau^2} \Big)dY \nonumber \\
&\leq \frac{1}{(\sqrt{2\pi}\tau)^{mn}}{\rm exp}\Big(\frac{\|M^*\|^2_F}{\tau^2}\Big)\int \frac{\sigma_K(Y)}{\sigma_K(Y)-\sigma_{K+1}(Y)}  {\rm exp}\Big(\frac{\|Y\|^2_F}{-4\tau^2} \Big)dY \cdot {\rm exp}\Big(\frac{h^2\|Z\|^2_F}{\tau^2}\Big)  \nonumber \\
\label{firstdct:two}
\end{align}
Combining Lemma \ref{key_key} part (1) with \eqref{firstdct:one} and \eqref{firstdct:two}, we can conclude that for sufficiently small $h$, there exists an upper bound on $J(Z,h)$ that is independent of $h$ and is integrable. We thus can employ DCT to get
\begin{eqnarray}\label{limits:two}
\lim_{h \rightarrow 0+} \mathbb{E}_Z [J(Z,h)] =\mathbb{E}_Z \lim_{h \rightarrow 0+}  [J(Z,h)].
\end{eqnarray}
We next focus on calculating $\lim_{h \rightarrow 0+}  [J(Z,h)]$. We decompose $J(Z,h)$ into two parts:
\begin{align}\label{two:parts}
& J(Z,h)= \nonumber \\
&\hspace{-0.2cm} \mathbb{E}_Y \underbrace{\Bigg[ \frac{([C_K(hZ+Y)]_{ij} -[C_K(Y)]_{ij})Z_{ij }}{h} \mathbbm{1}(\sigma_K(Y) \geq h^{2/3}, \sigma_K(Y)-\sigma_{K+1}(Y)\geq h^{2/3})\Bigg]}_{\triangleq H_1(Y, Z,h)}+ \nonumber \\
&\hspace{-0.2cm} \mathbb{E}_Y \underbrace{\Bigg[ \frac{([C_K(hZ+Y)]_{ij} -[C_K(Y)]_{ij})Z_{ij }}{h} \mathbbm{1}(\sigma_K(Y) \leq h^{2/3} \mbox{~or~} \sigma_K(Y)-\sigma_{K+1}(Y)\leq h^{2/3})\Bigg]}_{\triangleq H_2(Y, Z,h) },\nonumber \\
\end{align}
and analyze $H_1(Y,Z,h), H_2(Y, Z,h)$ separately. Regarding $H_1(Y, Z,h)$, first note that according to Weyl's inequality \citep{stewart1998perturbation}, we know
\[
|\sigma_i(hZ+Y)-\sigma_{i}(Y)| \leq h \|Z\|_F, \quad 1 \leq i \leq n
\]
Therefore, on the event $\{\sigma_K(Y) \geq h^{2/3}, \sigma_K(Y)-\sigma_{K+1}(Y)\geq h^{2/3}\}$, it is not hard to show that when $h$ is sufficiently small, 
\[
\frac{\sigma_K(hZ+Y)}{\sigma_K(hZ+Y)-\sigma_{K+1}(hZ+Y)} \leq \frac{4\sigma_K(Y)}{\sigma_K(Y)-\sigma_{K+1}(Y)},
\]
We can then employ Lemma \ref{loc:lip} to obtain, 
\begin{eqnarray*}
|H_1(Y, Z,h)| \leq \|Z\|_F|Z_{ij}|\frac{4\sigma_K(Y)}{\sigma_K(Y)-\sigma_{K+1}(Y)}.
\end{eqnarray*}
This enables us to apply DCT to derive
\begin{eqnarray}\label{H1:dct}
\mathbb{E}_Z \lim_{h \rightarrow 0+} \mathbb{E}_YH_1(Y,Z,h)= \mathbb{E} \lim_{h \rightarrow 0+} H_1(Y,Z,h)\overset{(d)}{=}\mathbb{E}_Y\frac{\partial [C_K(Y)]_{ij}}{\partial Y_{ij}},
\end{eqnarray}
where $(d)$ is due to Lemma \ref{divergence:cmp}. For $H_2(Y,Z,h)$, we have
\begin{align}\label{H2:decomp}
& |\mathbb{E}_YH_2(Y,Z,h)|  \nonumber \\
\leq &\mathbb{E}_Y[|Z_{ij}| \cdot |(\|Z\|_F+2\|Y\|_F/h) \cdot  \mathbbm{1}(\sigma_K(Y) \leq h^{2/3} \mbox{~or~} \sigma_K(Y)-\sigma_{K+1}(Y)\leq h^{2/3}) ] \nonumber \\
\overset{(e)}{\leq}&  |Z_{ij}|\cdot \|Z\|_F \cdot P(\sigma_K(Y)-\sigma_{K+1}(Y)\leq h^{2/3}) + \nonumber \\
&\hspace{0.4cm} 2|Z_{ij}| \cdot (\mathbb{E}\|Y\|_F^{7})^{1/7}\cdot \Big(\frac{P(\sigma_K(Y)-\sigma_{K+1}(Y)\leq h^{2/3})}{h^{7/6}} \Big)^{6/7}
\end{align}
We have used H$\ddot{o}$lder's inequality to derive $(e)$. Clearly the first term of the upper bound above vanishes as $h \rightarrow 0+$. We now show the second term goes to zero as well. For simplicity, we only show it for $M^*=0$. The general case $M^*\neq 0$ can be proved by the same arguments as presented in the proof of Lemma \ref{key_key}. We hence skip it here. Similar to the proof in Lemma \ref{key_key}, let $\lambda_i=\sigma^2_i(Y), 1 \leq i \leq n$ and denote the joint distribution of $(\lambda_1,\ldots, \lambda_n)$ by $f(\lambda_1,\ldots,\lambda_n)$. We can then rewrite 
\begin{align}\label{final:key}
& P(\sigma_K(Y)-\sigma_{K+1}(Y)\leq h^{2/3}) \nonumber \\
=&\idotsint \displaylimits_{\lambda_1\geq \ldots \geq \lambda_n\geq 0}f(\lambda_1,\ldots, \lambda_n)\cdot \mathbbm{1}(\sqrt{\lambda_K}-\sqrt{\lambda_{K+1}}\leq h^{2/3}) d\B \lambda \nonumber \\
\propto&  \idotsint \displaylimits_{\lambda_1\geq \ldots \geq \lambda_n \geq 0}  \mathbbm{1}(\sqrt{\lambda_K}-\sqrt{\lambda_{K+1}}\leq h^{2/3}) \prod_{a=1}^ne^{\frac{\lambda_a}{-2\tau}}\cdot \prod_{a=1}^n\lambda_a^{(m-n-1)/2}\cdot \prod_{a<b}(\lambda_a-\lambda_b)d\B \lambda \nonumber \\
 \overset{(f)}{\leq}& \idotsint \displaylimits_{\lambda_1\geq \ldots \geq \lambda_n \geq 0}  \mathbbm{1}(\sqrt{\lambda_K}-\sqrt{\lambda_{K+1}}\leq h^{2/3}) \prod_{a=1}^ne^{\frac{\lambda_a}{-2\tau}}\cdot \prod_{a \neq K}^n\lambda_a^{(m+n-1)/2-a} \cdot \lambda_K^{(m+n-3)/2-K} (\lambda_K-\lambda_{K+1})d\B \lambda \nonumber \\
\overset{(g)}{\leq}&\hspace{-0.5cm} \underbrace{ \iint \displaylimits_{0\leq  \lambda_{K+1}\leq \lambda_K} \mathbbm{1}(\sqrt{\lambda_K}-\sqrt{\lambda_{K+1}}\leq h^{2/3}) e^{\frac{\lambda_K+\lambda_{K+1}}{-2\tau}}(\lambda_K\lambda_{K+1})^{(m+n-3)/2-K}(\lambda_K-\lambda_{K+1})d\lambda_Kd\lambda_{K+1}}_{\triangleq Q(h)} \nonumber \\
& \cdot \Big[\prod_{a \neq K,K+1}^n \int_0^{\infty}e^{\frac{\lambda_a}{-2\tau}}\lambda_a^{(m-n-1)/2+n-a}d\lambda_a\Big],
\end{align}
where $(f)$ is obtained by using $\lambda_a-\lambda_b \leq \lambda_a$, for $a<b$; $(g)$ holds simply because we enlarge the set that is integrated over. We easily see that the second term on the right hand side of the last inequality is finite and independent of $h$. For the first term $Q(h)$, by using $\lambda_K-\lambda_{K+1}\leq 2\sqrt{\lambda_K}(\sqrt{\lambda_K}-\sqrt{\lambda_{K+1}})$. We have
\begin{eqnarray}
&&\hspace{-0.8cm} Q(h) \leq 2h^{2/3}\iint \displaylimits_{0\leq  \lambda_{K+1}\leq \lambda_K} \mathbbm{1}(\sqrt{\lambda_K}-\sqrt{\lambda_{K+1}}\leq h^{2/3}) e^{\frac{\lambda_K+\lambda_{K+1}}{-2\tau}}\lambda_K^{(m+n)/2-K-1} \cdot \nonumber \\
&&\hspace{5.8cm} \lambda_{K+1}^{(m+n-3)/2-K}d\lambda_Kd\lambda_{K+1} \nonumber \\
&&= 2h^{2/3} \int_0^{\infty}\Big[\int_{\lambda_{K+1}}^{(\sqrt{\lambda_{K+1}}+h^{2/3})^2}e^{\frac{\lambda_K}{-2\tau}}\lambda_K^{(m-n)/2+n-K-1}d\lambda_K \Big] \cdot \nonumber \\
&& \hspace{5cm} e^{\frac{\lambda_{K+1}}{-2\tau}}\lambda_{K+1}^{(m-n-1)/2+n-K-1}d\lambda_{K+1} \label{double:integ} \\
&&\overset{(h)}{=}O(h^{4/3}), \label{vanish:fin}
\end{eqnarray}
where $(h)$ can be derived by using mean value theorem for the inside integral in \eqref{double:integ}. Combining \eqref{H2:decomp}, \eqref{final:key} and \eqref{vanish:fin} together gives us
\begin{eqnarray}\label{H2:answer}
\lim_{h \rightarrow 0+} \mathbb{E}_YH_2(Y,Z,h)=0.
\end{eqnarray}
Collecting the results from \eqref{justify:one}, \eqref{smooth:df}, \eqref{limits:one}, \eqref{limits:two}, \eqref{two:parts}, \eqref{H1:dct} and \eqref{H2:answer}, we can finally conclude
\begin{eqnarray*}
df(C_K(Y)) &=&\mathbb{E}  \sum_{ij} \frac{\partial [C_K(Y)]_{ij}}{\partial Y_{ij}}.
\end{eqnarray*}
A direct application of Equation \eqref{lem5:two} from Lemma \ref{divergence:cmp} completes the proof.

\subsection{Proof of Corollary \ref{cor4}}
\label{proof:cor6}

Denote the compact SVD of $X$ by $X=U\Sigma V'$. We construct an ancillary matrix $Q=U'Y$, which is the response matrix of the following additive model,
\begin{eqnarray}
Q=\tilde{M}+\tilde{\mathcal{E}},   \label{new}
\end{eqnarray}
where $\tilde{M}=U'XM^{\ast}, \tilde{\mathcal{E}}=U'\mathcal{E}$. Due to the orthogonality of the columns of $U$, the entries of $\tilde{\mathcal{E}}$, i.e., $\tilde{\epsilon}_{ij}\overset{iid}{\sim}N(0,\tau^2)$. We now relate $df(M_K(Y))$ under the model \eqref{multimodel} to $df(C_K(Q))$ under the model \eqref{new}. A key observation is 
\[
XM_K(Y)=C_K(UU'Y)=UC_K(Q).
\]
It follows that
\begin{eqnarray*}
&&\hspace{-0.7cm}\mathbb{E} \left(\sum_{ i=1}^m\sum_{j=1}^n(XM_K(Y))_{ij} \epsilon_{ij}\right)=\mathbb{E}\left(\sum_{ i=1}^m\sum_{j=1}^n\sum_{k=1}^ru_{ik}(C_K(Q))_{kj}\epsilon_{ij}\right), \\
&&\hspace{-0.6cm} \mathbb{E}\left( \sum_{a=1}^r\sum_{b=1}^n(C_K(Q))_{ab}\tilde{\epsilon}_{ab}\right)=\mathbb{E}\left(\sum_{a=1}^r\sum_{b=1}^n\sum_{l=1}^m(C_K(Q))_{ab}u_{la}\epsilon_{lb}\right),
\end{eqnarray*}
where $u_{ik}$ is the $(i, k)$th entry of $U$ and $(C_K(Q))_{ab}$ is the $(a,b)$th element of $C_K(Q)$. Arranging the notation $a=k, b=j, l=i$, we thus obtain $df(M_K(Y))=df(C_K(Q))$. Given that $Q$ and $\hat{Y}$ share the same singular values, a direct use of Corollary \ref{cor3} for $C_K(Q)$ gives us the \df formula for $M_K(Y)$.

\subsection{Proof of Corollaries \ref{final:last0} and \ref{final:last}}
\label{proof;cor78}

Observe that 
\[
X RM_{\theta}(Y)=S_{\theta}(UU'Y)=US_{\theta}(U'Y). 
\]
Thus we can use the same arguments as in the proof of Corollary \ref{cor4} to obtain 
\[
df(RM_{\theta}(Y))=df(S_{\theta}(U'Y)). 
\]
Then applying Corollaries \ref{cor2} and \ref{cor3done} complete the proof of Corollaries \ref{final:last0} and \ref{final:last}, respectively.

\subsection{Stein's Unbiased Risk Estimate}
\label{SURE:condition}

\begin{proposition*}
\cite{stein1981estimation,efron2004least, johnstone2017gaussian} Suppose $\M{y}\sim N(\B{\mu}, \tau^2\M{I}_n)$, $\B{h}: \mathbb{R}^n\rightarrow \mathbb{R}^n$ is weakly differentiable, and $\mathbb{E}|y_ih_i(\M{y})|+\mathbb{E}|\frac{\partial h_i(\M{y})}{\partial y_i}|<\infty$ for $i=1,\ldots, n$. Then
\begin{align*}
df(\B{h}(\M{y}))&=\mathbb{E}\Big(\sum_{i=1}^n \partial h_i(\M{y})/ \partial y_i \Big), \\
\mathbb{E}\|\B{h}(\M{y})-\B{\mu}\|_2^2&=\mathbb{E}\Big[-\tau^2n+\|\B{h}(\M{y})-\M{y}\|_2^2+2\tau^2 \cdot \sum_{i=1}^n \partial h_i(\M{y})/ \partial y_i \Big].
\end{align*}
A function $g: \mathbb{R}^n\rightarrow \mathbb{R}$ is said to be weakly differentiable if there exist functions $f_i:\mathbb{R}^n\rightarrow \mathbb{R}, i=1,\ldots, n$, such that for all compactly supported and infinitely differentiable functions $\varphi$,
\begin{align*}
\int \varphi(\B{z})h(\B{z})d\B{z}=-\int \frac{\partial \varphi(\B{z})}{\partial z_i}g(\B{z})d\B{z}.
\end{align*}
\end{proposition*}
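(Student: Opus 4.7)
The claim bundles two results: Stein's identity (yielding the divergence expression for $df$) and the SURE formula for mean squared error. Both reduce at once to the coordinatewise Stein identity
\begin{equation*}
\mathbb{E}[(y_i - \mu_i)h_i(\M{y})] \;=\; \tau^2\,\mathbb{E}[\partial h_i(\M{y})/\partial y_i], \qquad i = 1,\ldots,n,
\end{equation*}
so my plan is to prove this one identity under the weak-differentiability and integrability hypotheses, and then read off both conclusions.

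First I would establish the identity for $h_i \in C_c^\infty(\mathbb{R}^n)$ by direct integration by parts against the Gaussian density $\phi(\M{y}) = (2\pi\tau^2)^{-n/2}\exp(-\|\M{y}-\B{\mu}\|_2^2/(2\tau^2))$. The key algebraic fact $(y_i-\mu_i)\phi(\M{y}) = -\tau^2\,\partial_{y_i}\phi(\M{y})$ together with Fubini in the other $n-1$ coordinates delivers the identity, with no boundary terms thanks to compact support. To extend to a general weakly differentiable $h_i$, I would mollify: set $h_i^{(\epsilon,R)} = (\chi_R h_i)*\rho_\epsilon$ for a smooth bump $\rho_\epsilon$ and a smooth cutoff $\chi_R$, apply the identity to $h_i^{(\epsilon,R)}$, and pass to the limit $\epsilon\downarrow 0$ and $R\to\infty$. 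The standard property $\partial_{y_i}(h_i*\rho_\epsilon) = (\partial_{y_i}h_i)*\rho_\epsilon$ makes the right-hand side converge, while the hypotheses $\mathbb{E}|y_i h_i(\M{y})| + \mathbb{E}|\partial h_i/\partial y_i| < \infty$ supply the dominating envelopes needed for dominated convergence on both sides.

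With Stein's identity in hand, the $df$ formula is immediate from its definition: $df(\B{h}(\M{y})) = \tau^{-2}\sum_i \mathrm{Cov}(h_i(\M{y}), y_i) = \tau^{-2}\sum_i \mathbb{E}[h_i(\M{y})(y_i-\mu_i)] = \sum_i \mathbb{E}[\partial h_i/\partial y_i]$. For the SURE identity, expand
\begin{equation*}
\|\B{h}(\M{y}) - \B{\mu}\|_2^2 \;=\; \|\B{h}(\M{y}) - \M{y}\|_2^2 + \|\M{y} - \B{\mu}\|_2^2 + 2\langle \B{h}(\M{y}) - \M{y},\, \M{y} - \B{\mu}\rangle,
\end{equation*}
take expectations, use $\mathbb{E}\|\M{y} - \B{\mu}\|_2^2 = n\tau^2$, and apply Stein's identity coordinatewise to $h_i(\M{y}) - y_i$ whose weak derivative in $y_i$ is $\partial h_i/\partial y_i - 1$. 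The cross term contributes $2\tau^2\sum_i \mathbb{E}[\partial h_i/\partial y_i] - 2n\tau^2$, and combining everything yields exactly the stated expression.

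The only technically delicate step is the mollification passage: one must check that weak derivatives really commute with convolution against $\rho_\epsilon$, and that the integrability bounds provide uniform-in-$(\epsilon, R)$ envelopes against which the Gaussian-weighted integrands can be dominated. Everything beyond that is bookkeeping and elementary algebra.
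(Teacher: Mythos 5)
The paper does not actually prove this proposition: it is stated as background in Appendix \ref{SURE:condition} and attributed to \cite{stein1981estimation,efron2004last, johnstone2017gaussian} (Stein's lemma and SURE), so there is no in-paper argument to match yours against. Your reduction is the right one and your bookkeeping is correct: the coordinatewise identity $\mathbb{E}[(y_i-\mu_i)h_i(\M y)]=\tau^2\,\mathbb{E}[\partial h_i/\partial y_i]$ yields the $df$ formula directly from $df=\tau^{-2}\sum_i \mathrm{Cov}(h_i,y_i)$, and applying it to $h_i-y_i$ in the cross term of the quadratic expansion gives the SURE identity. Where you diverge from the cited sources is the proof of the identity itself: Stein and Johnstone use the characterization of weakly differentiable functions as having a representative absolutely continuous on almost every line parallel to the axes, then run the one-dimensional argument (fundamental theorem of calculus, the Gaussian tail identity $\int_y^\infty (t-\mu)\phi(t)\,dt=\tau^2\phi(y)$, and Fubini) on each line; this needs no cutoff or mollifier and uses exactly the stated integrability. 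Your smooth-approximation route also works, but the sentence about ``dominating envelopes'' glosses over the two places where care is needed: (i) the limits must be taken in the order $\epsilon\downarrow 0$ at fixed $R$ first, because the Gaussian weight is not translation invariant, so a uniform-in-$\epsilon$ envelope like $\sup_{|t|\le\epsilon}|h_i(\M y+t)|\,|y_i|$ is \emph{not} controlled by $\mathbb{E}|y_ih_i|<\infty$; with the compact cutoff in place the $\epsilon$-limit is just $L^1$ convergence against a bounded weight, which is fine; and (ii) the $R\to\infty$ step produces the extra term $\mathbb{E}[(\partial_i\chi_R)\,h_i]$, whose vanishing requires Gaussian integrability of $h_i$ itself (e.g.\ $\mathbb{E}|h_i|<\infty$), which is strictly a little more than $\mathbb{E}|y_ih_i|<\infty$ on the slab where $y_i$ is small---though the same implicit strengthening is already needed for $\mathrm{Cov}(h_i,y_i)$, hence $df$, to be well defined, so it is a defect of the stated hypotheses rather than of your plan. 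With those two points made explicit, your proof is complete; the ACL route of the cited references is simply the shorter way to the same identity.
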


\bibliography{mc}
\bibliographystyle{ims}

\end{document}